\title[Nonlinear Heat
Equations Involving Highly Singular Initial Values]{The Nonlinear Heat Equation involving Highly Singular Initial Values and new blowup and life span results}
\author[S. Tayachi and F. B. Weissler] {Slim Tayachi$^1$ and Fred B. Weissler$^2$}
 \subjclass[2010]{Primary 35K55,
35A01, 35B44; Secondary 35K57, 35C15.} \keywords{Nonlinear heat
equation, highly singular initial values, finite time blow-up.} 
\font\TenEns=msbm10 \font\SevenEns=msbm7 \font\FiveEns=msbm5
\def\R {\mathbb R}
\def\Rd {{\mathbb R}^N}
\def\Dd {{\mathcal D'}(\Omega)}
\def\Sd {{\mathcal S'}(\Rd)}
\def\D {{\mathcal D}(\Omega_m)}
\def\X {{\mathcal X}}
\newtheorem{To}{Theorem}[section]
\newtheorem{prop}[To]{Proposition}
\newtheorem{cor}[To]{Corollary}
\newtheorem{rem}[To]{Remark}
\newtheorem{defini}[To]{Definition}
\newtheorem{rems}[To]{Remarks}
\date{\today}
\begin{document}

\maketitle

\begin{center}{$^1$
Universit\'e de Tunis El Manar, Facult\'e des Sciences de Tunis, 
\\ D\'epartement de
Math\'ematiques, 
\\ Laboratoire  \'Equations aux D\'eriv\'ees
Partielles LR03ES04, 
\\ 2092 Tunis,
Tunisie.
\\ e-mail: {\tt slim.tayachi@fst.rnu.tn} \vspace{1cm}
\\
$^2$ Universit\'e Paris 13, Sorbonne Paris Cit\'e,
\\ CNRS UMR 7539 LAGA,
 \\ 99, Avenue Jean-Baptiste Cl\'ement 93430
\\ Villetaneuse, France. \\ e-mail: {\tt
weissler@math.univ-paris13.fr}}
\end{center}

\begin{abstract}
 In this paper we prove local existence of solutions to the nonlinear
heat equation $u_t = \Delta u +a |u|^\alpha u, \; t\in(0,T),\;
x=(x_1,\,\cdots,\, x_N)\in \R^N,\; a = \pm 1,\; \alpha>0;$ with initial
value $u(0)\in L^1_{\rm{loc}}\left(\R^N\setminus\{0\}\right)$, anti-symmetric with respect to $x_1,\; x_2,\; \cdots,\; x_m$ and $|u(0)|\leq C(-1)^m\partial_{1}\partial_{2}\cdot \cdot \cdot
\partial_{m}(|x|^{-\gamma})$ for $x_1>0,\; \cdots,\; x_m>0,$ where $C>0$ is a constant, $m\in \{1,\; 2,\; \cdots,\; N\},$ $0<\gamma<N$ and $0<\alpha<2/(\gamma+m).$ This gives a local existence result
with highly singular initial values.

As an application, for $a=1,$ we establish new blowup criteria for $0<\alpha\leq 2/(\gamma+m)$, including the case $m=0.$ Moreover, if $(N-4)\alpha<2,$ we prove the existence of initial values
$u_0 = \lambda f,$ for which the resulting solution blows up in finite time $T_{\max}(\lambda f),$ if  $\lambda>0$ is sufficiently small. We also construct blowing up solutions with initial data $\lambda_n f$ such that $\lambda_n^{[({1\over \alpha}-{\gamma+m\over 2})^{-1}]}T_{\max}(\lambda_n f)$ has different finite limits along different
sequences $\lambda_n\to 0$. Our result extends the known ``small lambda'' blow up results for new values of $\alpha$ and a new class of initial data.
\end{abstract}

\medskip
\newpage

\section{Introduction}
\setcounter{equation}{0}
In this paper, which is a continuation of our previous article \cite{TW-I}, we study the existence and uniqueness of local in time solutions to the semilinear
heat equation
\begin{equation}
\label{NLheat}
u_t = \Delta u +a |u|^\alpha u,
\end{equation}
where $u=u(t,x)\in \R,\, t>0,\; x\in \R^N,\;  a = \pm 1,\; \alpha > 0,$ with highly singular initial data.  In addition, we obtain several new results on finite time blowup of solutions of \eqref{NLheat} in the case a = 1.

In \cite{TW-I} we considered initial data of the form
\begin{equation}
\label{ex1} u_0 = K(-1)^m\partial_{1}\partial_{2}\cdot
\cdot \cdot\partial_{m}\delta,
\end{equation}
in the case $a = 1,$ where $\delta$ is the Dirac point mass at the origin.
In this paper, our motivating example is the tempered distribution
\begin{equation}
\label{ex2}
u_0 = K (-1)^m\partial_{1}\partial_{2}\cdot \cdot
\cdot
\partial_{m}\left(|\cdot|^{-\gamma}\right) \in \Sd,
\end{equation}
where $m$ in an integer, $1 \le m \le N,$  $0<\gamma<N,$ and $K\in \R$.
In fact, we consider a more general class of initial data which  are in some sense bounded by \eqref{ex2}.
By a local solution, we mean a function $u : (0,T] \to C_0(\R^N)$, a classical solution of \eqref{NLheat} for $t\in (0,T],$ such that
$u(t) \to u_0$ in $ \mathcal{S}^\prime(\R^N)$ as $t \to 0$. Here $ \mathcal{S}^\prime(\R^N)$ is the space of tempered distributions on $\R^N,$ and
$$
C_0(\R^N)=\left\{f: \R^N \to \R \;{\rm continuous}\;; \lim_{|x|\to \infty}f(x)=0\right\}.
$$

As is standard practice, we study equation \eqref{NLheat} via the associated  integral formulation
\begin{equation}
\label{NLHint} u(t) = {\rm e}^{t\Delta}u_0 +a \int_0^t {\rm
e}^{(t-\sigma)\Delta} \big[|u(\sigma)|^\alpha u(\sigma)\big]
d\sigma,
\end{equation}
where ${\rm e}^{t\Delta}$ is the heat semigroup on $\R^N$  defined by
\begin{equation} \label{hsgrn}%
e^{t\Delta}\varphi=G_t \star \varphi,\; t>0,\; \varphi \in \Sd,
\end{equation}
$G_t$ being the Gauss kernel given by
\begin{equation}
G_t(x)=(4\pi t)^{-\frac{N}{2}} e^{-\frac{|x|^2}{4t}} ,\
 \; t>0 , \   \; x\in \Rd.\label{gk}%
\end{equation}
Throughout this paper, $\Omega \subset \R^N$ denotes the sector
\begin{equation} \label{dmn}%
\Omega= \biggl\{x=(x_1,\; x_2,\; \cdots,\; x_N) \in
\Rd;\; x_1>0,\; x_2>0,\cdots,\; x_m>0\biggr\},
\end{equation}
where $0\leq m\leq N$ is an integer. The case $m=0$ corresponds to $\Omega=\R^N.$  We write $\Omega$ rather than $\Omega_m$ to simplify notation
and since the value of $m$ will always be clear in context.  The important observation is that functions in $C_0(\R^N)$ which are anti-symmetric in
$x_1, x_2, \dots, x_m$ restrict to functions in $C_0(\Omega)$, and functions
in $C_0(\Omega)$ extend uniquely to functions in $C_0(\R^N)$ which are anti-symmetric in  $x_1, x_2, \dots, x_m$. Our basic approach, as in \cite{TW-I}, is to study equation \eqref{NLHint}
on $\Omega$ and extend the results by anti-symmetry to $\R^N$ (in the case $m \ge 1$).  One advantage of this approach is the ability to
use positivity and comparison properties on $\Omega$ which would not
hold directly on $\R^N$.  Of course, in the case $m = 0$ the antisymmetry condition
is vacuous.

Our first result directly complements the main result in \cite{TW-I} for the absorption case, i.e. \eqref{NLheat} with $a=-1.$

\begin{To}
\label{th1} Let $a=-1$ in \eqref{NLHint} and let the positive integer $m$ and the real number
$\alpha$ be such that $$1\leq m \leq N,\; 0<\alpha<{2\over N+m}.$$
Let $K\in \R,\; K\not=0,$ and set \begin{equation}
\label{initialdelta} u_0 = K(-1)^m\partial_{1}\partial_{2}\cdot
\cdot \cdot\partial_{m}\delta.
\end{equation} There exists a continuous solution $u
: (0,\infty) \to C_0(\R^N)$ of the integral equation \eqref{NLHint}
 with
initial value $u_0$ and such that $u(t)\to u_0$ in
$\mathcal{S}'(\R^N)$ as $t\to 0.$ Furthermore, $u(t)$ is anti-symmetric in
$x_1, x_2, \dots, x_m$, and if $K>0$ then $u(t,x) > 0$ for all $t > 0$ and $x \in \Omega$.
\end{To}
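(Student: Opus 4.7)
The plan is to mirror the construction of \cite{TW-I}, restricted to the sector $\Omega$ defined in \eqref{dmn}, and to exploit the absorption sign $a=-1$ to upgrade local existence to global existence for all $K\in\R$. Since $u_0$ is anti-symmetric in $x_1,\ldots,x_m$, the linear evolution
\[
U(t,x) := (e^{t\Delta}u_0)(x) = K(-1)^m\partial_1\partial_2\cdots\partial_m G_t(x)
\]
is smooth for $t>0$, anti-symmetric in $x_1,\ldots,x_m$, self-similar of degree $-(N+m)/2$, and (for $K>0$) strictly positive on $\Omega$, since $(-1)^m\partial_1\cdots\partial_m G_t(x) = (2t)^{-m}x_1\cdots x_m\, G_t(x)$ on $\Omega$. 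Consequently I would work on $\Omega$ with the Dirichlet heat semigroup $e^{t\Delta_D}$, which by reflection coincides with $e^{t\Delta}$ applied to the anti-symmetric extension; this makes positivity and the parabolic comparison principle available.

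The core step is to solve
\[
v(t) = U(t) - \int_0^t e^{(t-s)\Delta_D}\bigl(|v(s)|^\alpha v(s)\bigr)\,ds
\]
on $\Omega$ by a fixed-point argument in the weighted Banach space $\mathcal{X}_{m,\gamma}$ of \cite{TW-I} (with $\gamma=0$), whose norm is built from the self-similar profile of $U$. Subcriticality $\alpha<2/(N+m)$ is precisely the scaling condition ensuring that the Duhamel operator $N(v)(t):=\int_0^t e^{(t-s)\Delta_D}(|v|^\alpha v)\,ds$ maps $\mathcal{X}_{m,\gamma}$ to itself with Lipschitz constant $\lesssim R^\alpha$ on a ball of radius $R$. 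Moreover, the scaling $u\mapsto \lambda^{2/\alpha}u(\lambda^2 t,\lambda x)$ transforms $K$ into $\lambda^{2/\alpha-(N+m)}K$, whose exponent is positive under subcriticality, so the problem reduces to small $|K|$, in which a standard Picard contraction yields a unique $v\in\mathcal{X}_{m,\gamma}$ solving the above equation on $\Omega$. Because $a=-1$ and one starts from $v_1=U\ge 0$, the iterates remain non-negative and the parabolic maximum principle forces $0\le v(t)\le U(t)$, precluding finite-time breakdown and yielding $v\in C((0,\infty),C_0(\Omega))$.

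Extending $v$ to $\R^N$ by anti-symmetry in $x_1,\ldots,x_m$ produces the desired $u:(0,\infty)\to C_0(\R^N)$; the integral equation \eqref{NLHint} on $\R^N$ is inherited from its $\Omega$-counterpart via the method of images. Convergence $u(t)\to u_0$ in $\mathcal{S}'(\R^N)$ reduces to the standard fact $e^{t\Delta}u_0\to u_0$ in $\mathcal{S}'(\R^N)$, together with the vanishing of the Duhamel correction against each Schwartz test function as $t\to 0$, which follows from the bound $|v(s)|\le Cs^{-(N+m)/2}$ (coming from $v\le U$) combined with $\alpha(N+m)/2<1$. The main obstacle, just as in \cite{TW-I}, is identifying a weighted function space that simultaneously captures the singular self-similar profile of $U$ and closes under $N(\cdot)$; once that space is imported from the companion paper, the sign of the non-linearity and the maximum principle furnish the global existence essentially for free.
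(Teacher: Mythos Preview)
Your proposal is correct and follows essentially the paper's route: the paper simply remarks that the fixed-point construction of \cite{TW-I} on the sector $\Omega$ carries over verbatim to the case $a=-1$, and that global existence then follows from the absorption sign via the comparison $|u(t)|\le e^{t\Delta_\Omega}|u_0|$ (this is spelled out in the proof of Theorem~\ref{globalproperties}(i) using Kato's parabolic inequality). The scaling reduction to small $|K|$ is a harmless reorganisation---in \cite{TW-I} and in Theorem~\ref{th4} here one instead fixes $K$ and shrinks $T$---but the substance is identical.
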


For $m=0$ the previous result is obtained in \cite[Theorem 3 and Remark 3, p. 82]{BF}. For $m=1$ and $\alpha<{1\over N+1}$
it is obtained in \cite[Corollary 11, p. 6110]{G1}. The result of \cite{G1} is stated for $a=1$ but the proof  is also valid for $a=-1.$ See also \cite{MT,Ri,Wu}
and references therein for the existence of solution to the nonlinear heat equation with other singular initial values.

The main purpose  of this paper is to consider initial values of the form \eqref{ex2}, as well as a more general class of singular initial values.
We have obtained the following result for initial data given by \eqref{ex2}.

\begin{To}
\label{th3} Let the positive integer $m$ and the real numbers
$\alpha,\; \gamma$ be such that
$$
1\leq m \leq N,\; 0<\gamma<N, \; 0<\alpha<{2\over \gamma+m}.
$$
Let $K\in \R,\; K\not=0,$ and set
\begin{equation}
\label{initialmodule} u_0 = K(-1)^m\partial_{1}\partial_{2}\cdot
\cdot \cdot\partial_{m}\left(|\cdot|^{-\gamma}\right) \in \Sd.
\end{equation}
There exist $T>0$ and a continuous solution
$u: (0,T] \to C_0(\R^N)$ of the integral equation \eqref{NLHint}
 with
initial value $u_0$ and such that $u(t)\to u_0$ in
$\mathcal{S}'(\R^N)$ as $t\to 0.$
Moreover, $u(t)$ is anti-symmetric in
$x_1, x_2, \dots, x_m$, and if $K>0$ then $u(t,x) > 0$ for all $t \in (0,T]$ and $x \in \Omega$.

These solutions can be extended to maximal solutions
$u : \left(0,T_{max}(u_0)\right) \to C_0(\R^N)$. In the case $a = -1$, $T_{max}(u_0) = \infty$.
In the case $a = 1$, $T_{max}(u_0) < \infty$.
\end{To}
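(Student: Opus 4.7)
The plan is to prove Theorem \ref{th3} by a Duhamel fixed-point argument on the sector $\Omega$ and then antisymmetrizing back to $\R^N$. The first step is to extract the pointwise behavior of the linear evolution $e^{t\Delta}u_0$: commuting the derivatives through the heat semigroup and using the self-similarity of $|x|^{-\gamma}$ under the heat flow, one obtains $e^{t\Delta}u_0(x)=K\,t^{-(\gamma+m)/2}\Phi(x/\sqrt{t})$ with $\Phi$ a fixed smooth profile, strictly positive on $\Omega$, vanishing on $\partial\Omega$, and with Gaussian-type decay at infinity. Let $\psi_0(t,x)$ denote the resulting self-similar upper envelope, of parabolic homogeneity $-(\gamma+m)$.

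Next, one introduces a weighted space $\Ww$ of antisymmetric functions $u\in C((0,T],C_0(\R^N))$ satisfying $|u(t,x)|\leq M\psi_0(t,x)$ pointwise, with closed ball $\Bb$, and shows that the Duhamel map
\[
\mathcal{F}(u)(t) = e^{t\Delta}u_0 + a\int_0^t e^{(t-\sigma)\Delta}\bigl(|u|^{\alpha}u\bigr)(\sigma)\,d\sigma
\]
sends $\Bb$ into itself and is a strict contraction for $T$ sufficiently small. The key ingredient is a self-reproducing estimate of the form $e^{(t-\sigma)\Delta}\bigl(\psi_0(\sigma,\cdot)^{\alpha+1}\bigr)(x)\leq C\,\sigma^{-\alpha(\gamma+m)/2}\psi_0(t,x)$; the ensuing time integral $\int_0^t\sigma^{-\alpha(\gamma+m)/2}\,d\sigma$ converges precisely when $\alpha(\gamma+m)<2$, which is the hypothesized subcriticality. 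Parabolic regularity then upgrades the fixed point to a classical solution on $(0,T]$ with values in $C_0(\R^N)$, and convergence $u(t)\to u_0$ in $\Sd$ as $t\to 0$ follows by pairing with Schwartz functions, using $e^{t\Delta}u_0\to u_0$ in $\Sd$ together with the vanishing of the nonlinear Duhamel term at $t=0$. Antisymmetry is built into $\mathcal{F}$ by construction, and positivity on $\Omega$ when $K>0$ comes from the maximum principle on $\Omega$ with Dirichlet data on $\partial\Omega$.

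The maximal extension $u:(0,T_{\max})\to C_0(\R^N)$ is obtained by standard continuation. For $a=-1$, the dissipative sign of the nonlinearity together with positivity on $\Omega$ yields a comparison bound $0\leq u(t,\cdot)\leq e^{t\Delta}u_0$ on $\Omega$, uniformly bounded on $[\varepsilon,\infty)$ for each $\varepsilon>0$, hence $T_{\max}=\infty$. For $a=+1$, I would exploit scaling: $u_0$ is homogeneous of degree $-(\gamma+m)$, so under the parabolic dilation $u(t,x)\mapsto \lambda^{2/\alpha}u(\lambda^2 t,\lambda x)$ the initial datum scales by $\lambda^{2/\alpha-\gamma-m}$, whose exponent is strictly positive by hypothesis; hence $T_{\max}(Ku_0)$ is a homogeneous function of $K>0$, so it is either identically infinite or identically finite on $K>0$. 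To rule out the former, a Fujita-type test-function or subsolution argument on $\Omega$ — exploiting the strict positivity of $u(t)$ near $x=0$ inherited from the singular initial profile — yields a differential inequality $y'\geq c\,y^{1+\alpha}$ for a weighted spatial average of $u$, forcing $T_{\max}<\infty$. The main technical obstacle is the self-reproducing weight estimate in the second step: $\psi_0$ must simultaneously encode the homogeneity of $u_0$, its vanishing on $\partial\Omega$ (needed to preserve antisymmetry through $\mathcal{F}$), and Gaussian-type decay at infinity, and the bound on $e^{(t-\sigma)\Delta}\psi_0^{\alpha+1}$ must be sharp enough that the threshold $\alpha(\gamma+m)<2$ emerges cleanly. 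A secondary difficulty is the $a=+1$ blowup proof for a distributional initial datum, which requires working at positive times and then using the self-similar scaling to transfer the conclusion back to $t=0$.
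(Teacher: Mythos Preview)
Your existence construction and the $a=-1$ case are essentially the paper's own argument (Theorems~\ref{th4}, \ref{globalproperties}(i) and \ref{th3'}): the weight is exactly $\Psi(t)=e^{t\Delta_\Omega}\psi_0$, your ``self-reproducing estimate'' is the combination of the semigroup identity $e^{(t-\sigma)\Delta_\Omega}\Psi(\sigma)=\Psi(t)$ with $\|\Psi(\sigma)\|_\infty=C\sigma^{-(\gamma+m)/2}$, and the contraction threshold $\alpha(\gamma+m)<2$ drops out precisely as you describe; the $a=-1$ comparison $|u(t)|\le e^{t\Delta_\Omega}|\psi|$ is likewise the paper's Kato-inequality step.

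The one genuine difference is the $a=+1$ blowup. You argue by the scaling dichotomy (correct, and recorded in the paper as $\lambda^{[(\frac1\alpha-\frac{\gamma+m}{2})^{-1}]}T_{\max}(\lambda\psi_0)=T_{\max}(\psi_0)$) and then propose a Kaplan-type weighted-average inequality $y'\ge cy^{1+\alpha}$ to exclude $T_{\max}=\infty$. This can be made to work but is left as a sketch. The paper instead applies Weissler's necessary condition \cite[Theorem~1]{W4}: if the positive solution with datum $\psi_0$ were global, one would have $\|e^{t\Delta_\Omega}\psi_0\|_\infty\le(\alpha t)^{-1/\alpha}$ for all $t>0$; but the left side equals $\|e^{\Delta_\Omega}\psi_0\|_\infty\, t^{-(\gamma+m)/2}$, and since $(\gamma+m)/2<1/\alpha$ this fails for large $t$. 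The paper's route is shorter and avoids any test-function construction on the sector; yours is more self-contained but requires carrying out the eigenfunction argument on $\Omega$ at a positive time.
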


\begin{rem}
\label{rem32b1} {\rm
The condition $\alpha<2/(\gamma+m)$ is in some sense optimal. See Proposition \ref{optimalxrho} below for an explanation.}
\end{rem}

\begin{rem}
\label{rem32b3}
  {\rm  The upper bound $\alpha<2/(\gamma+m)$ in Theorem~\ref{th3} for which
a solution exists with initial value given by (\ref{ex2})
 is precisely as predicted by a formal scaling argument. The initial value
$\partial_{1}\partial_{2}\cdot \cdot \cdot
\partial_{m}\left(|\cdot|^{-\gamma}\right),\; 0<\gamma<N,$ is in the Sobolev space $H^{s,r}(\R^ N)$, when
$$
s+m + \gamma < {N\over r}<\gamma.
$$
On the other hand, the scaling
critical exponent $s_c$ is given by $s_c={N\over r}-{2\over
\alpha}.$ To see this, we observe that the transformation
$\lambda^{{2\over \alpha}}u(\lambda^2t,\lambda x)$, which leaves
invariant the set of solutions to $u_t=\Delta u+a|u|^\alpha u$, acts
on initial values as $\lambda^{{2\over \alpha}}u_0(\lambda\cdot)$.
Furthermore,
$$\|\lambda^{{2\over \alpha}}u_0(\lambda\cdot)\|_{\dot{H}^{s,r}(\R^N)}=\lambda^{{2\over \alpha}-{N\over
r}+s}\|u_0\|_{\dot{H}^{s,r}(\R^N)},\;\forall\; \lambda >0.$$ In
other words, the homogeneous Sobolev norm of $\dot{H}^{s,r}(\R^N)$
is invariant under the action of $\lambda^{{2\over
\alpha}}u_0(\lambda\cdot)$ precisely if $s = s_c$.  We expect that solutions exist
if the initial value is in some
 $H^{s,r}(\R^N)$ with $s>s_c.$ With the initial value given  by (\ref{ex2}), this is possible when  ${N\over r}-{2\over
\alpha}<{N\over r}-(\gamma+m)$ i.e. $\alpha<2/(\gamma+m).$ A similar explanation for initial data given by \eqref{ex1} is given in \cite{TW-I}.}
\end{rem}

We have also obtained a local existence result for more general class of initial values, in some sense ``bounded by'' the distribution \eqref{ex2}.

\begin{To}
\label{th3bb}
Let the positive integer $m$ and the real numbers
$\alpha,\; \gamma$ be such that
$$
 1\leq m \leq N,\; 0<\gamma<N, \; 0<\alpha<{2\over \gamma+m}.
 $$
Let $u_0\in L^1_{\rm{loc}}\left(\R^N\setminus\{0\}\right)$ be anti-symmetric with respect to $x_1,\; x_2,\; \dots,\; x_m$ and satisfy  \begin{equation}
\label{initialinfmodule} |u_0(x)| \leq C(-1)^m\partial_{1}\partial_{2}\cdot
\cdot \cdot\partial_{m}\left(|x|^{-\gamma}\right),\; x \in \Omega,
\end{equation}
where $C>0$ is a constant. There exist $T>0$ and a continuous solution $u
: (0,T] \to C_0(\R^N)$ of the integral equation \eqref{NLHint}
 with
initial value $u_0$ and such that $u(t)\to u_0$ in
$\mathcal{S}'(\R^N)$ as $t\to 0.$
Moreover, $u(t)$ is anti-symmetric in
$x_1, x_2, \dots, x_m$, and if $u_0(x) \ge 0$ ($u_0(x) \not\equiv 0$) for $x \in \Omega$, then $u(t,x) > 0$ for all $t \in (0,T]$ and $x \in \Omega$.

These solutions can be extended to maximal solutions
$u : \left(0,T_{max}(u_0)\right) \to C_0(\R^N)$. In the case $a = - 1$, $T_{max}(u_0) = \infty$.  In the case $a = 1$,
$\liminf_{\lambda \to 0}\lambda^{[({1\over \alpha}-{\gamma+m\over 2})^{-1}]}T_{max}(\lambda u_0) > 0$.
\end{To}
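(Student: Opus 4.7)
The strategy is to reuse the fixed point argument from the proof of Theorem~\ref{th3} on the sector $\Omega$, replacing the particular initial datum by the pointwise majorant in \eqref{initialinfmodule}. Set $\psi_0(x) := (-1)^m\partial_1\partial_2\cdots\partial_m(|x|^{-\gamma})$, which is strictly positive on $\Omega$ and homogeneous of degree $-(\gamma+m)$. Since $u_0$ and $\psi_0$ are both anti-symmetric in $x_1,\ldots,x_m$ and $e^{t\Delta}$ preserves this anti-symmetry, one may reduce to $\Omega$ and interpret the heat evolution there via Dirichlet boundary conditions. Positivity of the associated kernel, combined with \eqref{initialinfmodule}, yields the crucial pointwise estimate
\begin{equation}\label{majlin}
|e^{t\Delta}u_0(x)|\,\leq\, C\, e^{t\Delta}\psi_0(x),\qquad x\in\Omega,\ t>0,
\end{equation}
so that the linear evolution of $u_0$ is controlled, pointwise on $\Omega$, by $C$ times the linear evolution of the specific datum handled in Theorem~\ref{th3}.

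First, I will run the contraction argument in the same weighted space used for Theorem~\ref{th3}: functions $u:(0,T]\to C_0(\R^N)$ anti-symmetric in $x_1,\ldots,x_m$ whose modulus on $\Omega$ is bounded by a fixed multiple of $e^{t\Delta}\psi_0$. Because the Duhamel estimates there depend on the initial datum only through such pointwise bounds, and \eqref{majlin} provides exactly this control of the linear part, the contraction mapping argument transfers verbatim, giving $u$ on some interval $(0,T]$ with $T$ depending only on $C$ and on $\alpha,\gamma,m,N$. Convergence $u(t)\to u_0$ in $\S'(\R^N)$ follows because $e^{t\Delta}u_0\to u_0$ in $\S'(\R^N)$ while the Duhamel term vanishes as $t\to 0$ via the same integrability used in Theorem~\ref{th3}. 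Under the hypothesis $u_0\geq 0$ on $\Omega$, positivity is preserved by the iteration and made strict for $t>0$ by the strong maximum principle on $\Omega$. Standard restart in $C_0(\R^N)$ produces the maximal solution $u:(0,T_{\max}(u_0))\to C_0(\R^N)$; for $a=-1$, the supersolution $C\, e^{t\Delta}\psi_0$ dominates $|u(t,\cdot)|$ for all $t>0$ (absorption is favorable), giving $T_{\max}(u_0)=\infty$.

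The only genuinely new computation concerns the lifespan lower bound for $a=1$. Replacing $u_0$ by $\lambda u_0$, \eqref{majlin} scales to $|e^{t\Delta}(\lambda u_0)|\leq C\lambda\, e^{t\Delta}\psi_0$. Using the self-similarity $e^{t\Delta}\psi_0(x)=t^{-(\gamma+m)/2}\Psi(x/\sqrt{t})$ inherited from $\psi_0(\mu x)=\mu^{-(\gamma+m)}\psi_0(x)$, the smallness condition from the contraction argument takes the scale-invariant form $(C\lambda)^\alpha\, T^{\,1-\alpha(\gamma+m)/2}\leq \eta$ for some absolute $\eta>0$. Since $1-\alpha(\gamma+m)/2>0$ by $\alpha<2/(\gamma+m)$, solving for $T$ gives
\[
T_{\max}(\lambda u_0)\,\geq\, c\,\lambda^{-1/(1/\alpha-(\gamma+m)/2)},
\]
so that $\liminf_{\lambda\to 0}\lambda^{1/(1/\alpha-(\gamma+m)/2)}T_{\max}(\lambda u_0)\geq c>0$, which is the claimed inequality.

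The principal obstacle is the first one: confirming that the weighted Banach space and the nonlinear estimates of Theorem~\ref{th3} are set up so that the initial datum enters \emph{only} through the pointwise majoration \eqref{majlin} and not through the specific algebraic form $K(-1)^m\partial_1\cdots\partial_m(|\cdot|^{-\gamma})$. Once this decoupling is in place, the remaining steps---positivity via comparison on $\Omega$, maximal continuation, global existence for $a=-1$, and scaling for $a=1$---follow along the same lines as the corresponding steps of Theorem~\ref{th3}.
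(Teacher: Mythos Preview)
Your proposal is correct and follows essentially the same approach as the paper: the paper packages the contraction argument on $\Omega$ in the weighted space $X_T=\{u:|u(t)|\le M\,e^{t\Delta_\Omega}\psi_0\}$ as Theorem~\ref{th4}, where the initial datum indeed enters only through the bound $|\psi|\le K\psi_0$ (your ``principal obstacle'' is exactly how Theorem~\ref{th4} is set up), then lifts to $\R^N$ by anti-symmetric extension (Theorem~\ref{th3'}) and reads off global existence for $a=-1$ and the lifespan lower bound for $a=1$ from Theorem~\ref{globalproperties}. Your scaling computation for the lifespan matches the paper's proof of Theorem~\ref{globalproperties}(ii) verbatim, and your comparison argument for $a=-1$ is the paper's Kato-inequality step in Theorem~\ref{globalproperties}(i).
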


See Theorem \ref{th3'} below (in Section 3) for a more general and precise version of the last result, including uniqueness and continuous dependence.
Uniqueness of the solutions constructed in Theorems  \ref{th3} and \ref{th3bb} holds in the space of functions anti-symmetric in
$x_1, x_2, \dots, x_m$ and bounded by a multiple of the solution of the linear heat equation
with the initial value given by \eqref{initialmodule}. See Sections \ref{xmgamma} and \ref{Wmgamma} for more details.

\begin{rem}
\label{remfw1}
 {\rm The case $m = 0$ in Theorem \ref{th3bb}  is  known. See \cite[Theorem 2.8, p. 313]{D} for $a = 1$ and  \cite[Theorem 8.8, p. 536]{CDEW} for
 $a = -1.$}
\end{rem}

We now turn to the blowup results, and for this discussion we of course take $a = 1$ in \eqref{NLHint}.  The first result is related to the decay of the initial data at infinity.
Let $u\in C\left(\left[0,T_{\max}(u_0)\right)\right)$ be the solution of \eqref{NLheat} with initial data $u_0\in C_0(\R^N),\; u_0\geq 0$. It is shown in \cite[Theorem 3.2 (i), p. 372]{LeeNi} that
there exists $C>0$ such that if
\begin{equation}
\label{conditionLeeNi}\liminf_{|x|\to \infty}|x|^{2/\alpha}u_0(x)>C,
\end{equation}
then $T_{\max}(u_0)<\infty$. This last condition is improved in \cite[Theorem 2, p. 66]{SW} to be
\begin{equation}
\label{conditionWeisslerSouplet}\liminf_{|x|\to \infty,\; x\in D}|x|^{2/\alpha}u_0(x)>C,
\end{equation}
where $C=C(D)>0$ and $D$ is a conic sector of $\R^N.$ The result of \cite{SW} is valid for $u_0$ nonnegative belonging to the Sobolev space $W^{1,s}_0(O), \; s$ large and  $O\subset \R^N$  a regular domain containing a sectorial domain $D.$
In \cite{MY}, the condition \eqref{conditionWeisslerSouplet} is both improved and weakened. The improvement is that domains $D$ which become
thin compared to a sector as $|x| \to \infty$ are allowed, but the result only applies to $u_0 \in L^\infty(\R^N)$ such that
\begin{equation}
\label{conditionMY}\liminf_{|x|\to \infty,\; x\in D}|x|^\gamma u_0(x)>C,
\end{equation}
for some $0 < \gamma < 2/\alpha$, and $C > 0$ {\it any} positive constant.  In this case, if $u_0 \ge 0$, then
$T_{\max}(u_0) < \infty$.  A similar result holds if
$u_0 \in W^{1,\infty}(\R^N)$, $u_0$ not necessarily positive, but with an additional condition on $\nabla u_0$.   See \cite[Theorem 2.1, p. 1435]{MY}, and the remark
which follows.  Finally, the result of \cite{MY} is improved in \cite[Theorems 1 and 2, p. 1019]{R}
in the case of positive initial value by giving more precise conditions on the initial
value, including the ``critical case", and by extending the type of domains considered.

We are able to extend and improve some of these blowup criteria.  As an example,
it is a consequence of the next theorem that if $u_0\in C_0(\R^N),\; u_0\geq 0$,
is such that $u_0(x) \ge c|x|^{-2/\alpha}\sin^2(\log |x|)$ for $c > 0$ large enough
and for all large $|x|$, then $T_{\max}(u_0)<\infty$.  See Remark~\ref{compact}(iii) just below for details and Remark~\ref{exampleinitialdata}  for more exotic examples.  As far as we are aware, this example is not covered by
any previously known results.  On the other hand, it seems that our methods do
not recover the results in \cite{MY,R} on domains which are asymptotically
thinner than sectors.

Throughout this paper,
we let $D_\lambda$ denote the dilation operator
\begin{equation}
\label{dilation}
(D_\lambda f)(x) = f(\lambda x)
\end{equation}
for all $\lambda > 0$.
For future reference, we recall the commutation relation between
the operator $D_\lambda$ and the heat semigroup on $\R^N$,
\begin{equation}
\label{commutation}
D_\lambda e^{\tau\lambda^2\Delta} = e^{\tau\Delta}D_\lambda
\end{equation}
for all $\lambda > 0$ and $\tau > 0$.

\begin{To}
\label{blowup} Let $a = 1$ in equation \eqref{NLHint} and let the integer $m$ and the real numbers
$\alpha,\; \gamma$ be such that $$ 0\leq m \leq N,\; 0<\gamma<N, \;
0 <\alpha \le {2\over \gamma+m}.$$
Let $u_0\in C_0(\R^N)$, anti-symmetric with respect to $x_1,\; x_2,\; \dots,\; x_m$, and let $u\in C\left(\left[0,T_{\max}(u_0)\right), C_0\left(\R^N\right)\right)$ be
the maximal solution of \eqref{NLheat} with initial data $u_0$.  Let  $f\in C_0(\R^N)$ be such that
\begin{itemize}
\item[(i)]$f$ is anti-symmetric with respect to $x_1,\; x_2,\; \dots,\; x_m$,
\item[(ii)]$u_0(x) \ge f(x)\geq 0$ for $x\in \Omega$,
\item[(iii)]there exists $K>0$ such that
\begin{equation}
\label{initialinfmodulef} f(x) \le K x_1x_2\cdots x_m|x|^{-\gamma-2m},\;  x\in \Omega.
\end{equation}
\end{itemize}
Suppose there exist a sequence  $\lambda_n \to \infty$ and
$z \in L^1_{\rm{loc}}\left(\R^N\setminus\{0\}\right)$, $z \not\equiv 0$,
 anti-symmetric with respect to $x_1,\; x_2,\; \dots,\; x_m$, such that
\begin{equation}
\label{subseqlim}
\lambda_n^{\gamma + m}D_{\lambda_n}f_{|\Omega} \to
z_{|\Omega}
\end{equation}
in ${\mathcal D'}\left(\Omega\right)$, as $n \to \infty$.
If $\alpha = {2\over \gamma+m}$, assume in addition that
\begin{equation}
\label{notsmall}
\|e^{\Delta}z\|_{L^\infty(\R^N)} > \left(\frac{1}{\alpha}\right)^{\frac{1}{\alpha}}.
\end{equation}
Then $T_{\max}(u_0)<\infty$.
\end{To}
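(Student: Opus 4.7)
The plan is to reduce the blow-up of $u$ to that of the smaller solution $v$ starting from $f$, and then detect blow-up of $v$ via a Weissler-type criterion applied to the rescaled initial data along the subsequence $\lambda_n$. For the comparison step: with $u_0$ and $f$ both anti-symmetric in $x_1,\dots,x_m$ and $u_0 \ge f \ge 0$ on $\Omega$, the parabolic comparison principle on $\Omega$ (with zero Dirichlet boundary conditions, automatically enforced by anti-symmetry) gives $u(t,x) \ge v(t,x) \ge 0$ for $x\in\Omega$ on the common existence interval. Taking $L^\infty$ norms, which by anti-symmetry agree with the $L^\infty(\Omega)$ norms, yields $T_{\max}(u_0) \le T_{\max}(f)$, so it suffices to show $T_{\max}(f)<\infty$.

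For the criterion and rescaling: working on $\Omega$ with $v\ge 0$ solving $v_t=\Delta v + v^{1+\alpha}$ under zero Dirichlet conditions, set $w(s,x):=\bigl(e^{(T-s)\Delta^\Omega}v(s,\cdot)\bigr)(x)$, where $\Delta^\Omega$ is the Dirichlet Laplacian on $\Omega$. A direct computation together with Jensen's inequality for the sub-Markovian kernel $G^\Omega_{T-s}(x,\cdot)$ gives $w_s\ge w^{1+\alpha}$, and ODE comparison with $y'=y^{1+\alpha}$ yields the criterion: if $(e^{T\Delta^\Omega}f)(x)>(\alpha T)^{-1/\alpha}$ for some $T>0$ and some $x\in\Omega$, then $T_{\max}(f)<T$. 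By the method of images, $(e^{T\Delta^\Omega}f)(x)=(e^{T\Delta}f)(x)$ for $x\in\Omega$, with $f$ extended by anti-symmetry on the right. Taking $T=\lambda_n^2$ and using the commutation \eqref{commutation}, the criterion reduces to finding $n$ and $x\in\Omega$ with
\[
[e^\Delta(\lambda_n^{\gamma+m}D_{\lambda_n}f)](x) \;>\; \alpha^{-1/\alpha}\,\lambda_n^{\gamma+m-2/\alpha}.
\]

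For the passage to the limit: set $g_n:=\lambda_n^{\gamma+m}D_{\lambda_n}f$. The bound \eqref{initialinfmodulef} scales to the uniform pointwise estimate $0\le g_n(y)\le K\,y_1\cdots y_m|y|^{-\gamma-2m}=:\Phi(y)$ on $\Omega$. The Dirichlet heat kernel $G^\Omega_1(x,y)$ vanishes like $y_1\cdots y_m$ as $y\to\partial\Omega$ (in particular at the corner $y=0$); combined with $\gamma<N$, this makes $G^\Omega_1(x,\cdot)\Phi$ integrable on $\Omega$, since near $y=0$ the integral reduces to $\int_0^1 r^{N-1-\gamma}\,dr<\infty$ after angular integration. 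A smooth cutoff argument then combines the distributional convergence \eqref{subseqlim} on compact subsets of $\Omega$ with the uniform integrable domination by $\Phi$ on the tails to yield pointwise convergence $(e^\Delta g_n)(x)\to(e^\Delta z)(x)$ for every $x\in\Omega$. Since $z\ge 0$ and $z\not\equiv 0$ on $\Omega$, $e^\Delta z>0$ on $\Omega$, so by Fatou and anti-symmetry $\liminf_n\|e^\Delta g_n\|_{L^\infty(\R^N)}\ge\|e^\Delta z\|_{L^\infty(\R^N)}>0$.

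In the subcritical case $\alpha<2/(\gamma+m)$ the exponent $\gamma+m-2/\alpha$ is negative, so $\alpha^{-1/\alpha}\lambda_n^{\gamma+m-2/\alpha}\to 0$ and the criterion is satisfied for all large $n$. In the critical case $\alpha=2/(\gamma+m)$ the exponent vanishes and the criterion reads $\|e^\Delta z\|_{L^\infty(\R^N)}>(1/\alpha)^{1/\alpha}$, which is precisely hypothesis \eqref{notsmall}. In either case, $T_{\max}(f)\le\lambda_n^2<\infty$ for some $n$, completing the proof. The main technical hurdle is the third step: turning the purely distributional convergence \eqref{subseqlim} into a pointwise lower bound for $e^\Delta g_n$ requires both the integrable domination by $\Phi$ inherited from \eqref{initialinfmodulef} and the boundary vanishing of $G^\Omega_1$, which together guarantee the Gaussian integrability needed to justify passing to the limit.
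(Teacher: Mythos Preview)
Your proof is correct and follows essentially the same strategy as the paper: reduce to $f$ by comparison on $\Omega$, invoke the Weissler criterion $\|e^{t\Delta_\Omega}f\|_\infty \le (\alpha t)^{-1/\alpha}$ for nonnegative solutions, rescale via $t_n=\lambda_n^2$ and the commutation relation, and pass to the limit using the uniform domination $0\le g_n\le K\psi_0$ to obtain $\|e^{\Delta_\Omega}g_n\|_\infty \to \|e^{\Delta_\Omega}z\|_\infty>0$. The only difference is presentational: the paper cites \cite{W4} for the criterion and \cite[Proposition~4.1(ii)]{MTW} for the weak$^\star$ continuity of $e^{\Delta_\Omega}$ on $\mathcal{B}_K^*$ (which gives convergence in $C_0(\Omega)$, not just pointwise), whereas you re-derive both ingredients by hand; your cutoff-plus-domination argument is precisely what underlies that proposition.
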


\begin{rems}
 \label{compact}
 $\;${\rm
\begin{itemize}
 \item [(i)] By comparison on $\Omega$, it suffices to prove this result with $u_0 = f$.
Furthermore, Theorem~\ref{blowup} applies not just to $u_0 \in C_0(\R^N)$,
 but to any anti-symmetric $u_0\in L^1_{\rm{loc}}\left(\R^N\setminus\{0\}\right)$, with $u_0 \ge f$ on $\Omega$, for which a local solution of equation
 \eqref{NLHint}, with $a = 1$, exists (and is nonnegative on $\Omega$).  This includes, for example, the initial data prescribed in Theorem~\ref{th3bb}.
\item [(ii)]It follows from the considerations in \cite[Section 2]{CDW-DCDS} and \cite[Section 3]{MTW} (see also \cite[Section 5]{CDWsurvey}) that under condition \eqref{initialinfmodulef},
then for any sequence $\lambda_n \to \infty$ there is a subsequence, which we still call
 $\lambda_n$, and $z \in L^1_{\rm{loc}}\left(\R^N\setminus\{0\}\right)$,
 anti-symmetric with respect to $x_1,\; x_2,\; \dots,\; x_m$,
 such that \eqref{subseqlim} holds.  The issue in Theorem~\ref{blowup} is that we need $z\not\equiv 0$, or large in some sense if  $\alpha = {2\over \gamma+m}$.  Also, in the case $m \ge 1$ we need to show that
$e^{\Delta}z$ is well defined.  This follows from
the results in \cite{MTW} and will be explained in Section~\ref{blowup-span}.
\item [(iii)]In the case $m = 0$, i.e. $\Omega = \R^N$ (and so the anti-symmetry condition is vacuous), taking $\alpha = 2/\gamma$, one can see that if
$f\in C_0(\R^N)$ is such that $f(x) = c|x|^{-2/\alpha}$ for large $x$,
for example $|x| \ge \rho$,
then $\lambda^{2/\alpha}D_\lambda f(x) = c\lambda^{2/\alpha}|\lambda x|^{-2/\alpha} = c|x|^{-2/\alpha}$ for all $|x| \ge \rho/\lambda$.
Thus $\lambda^{2/\alpha}D_\lambda f \to c|\cdot|^{-2/\alpha}$ in
${\mathcal D'}(\R^N)$,  as $\lambda\to \infty$  and we recover the
result of \cite{LeeNi}. Furthermore, if $f(x) = c|x|^{-2/\alpha}\sin^2(\log |x|)$ for all $|x| \ge \rho$, then
$\lambda^{2/\alpha}D_\lambda f(x) = c|x|^{-2/\alpha}\sin^2(\log |x| + \log\lambda) $ for all $|x| \ge \rho/\lambda$, and so one may take
$\lambda_n = e^{\pi n}$ in order to apply to above result.
\item [(iv)] If $\alpha\leq 2/(N+m)$ then the anti-symmetry of $u_0$ and the condition
 $u_0 \ge 0$, $u_0 \not\equiv 0$ on $\Omega$ are enough
 to guarantee $T_{\max}(u_0)<\infty$. This is true thanks to Fujita-type  results
 (\cite{BL,M1,M2,K}) on the sector $\Omega$.  In other words, the above result is only of interest in the range ${2\over N+m} < \alpha \le {2\over \gamma+m}$, and it is the case
 $\alpha = {2\over \gamma+m}$ which is of most interest.
\end{itemize}
 }
 \end{rems}

The other main blowup results in this article concern initial values
$u_0 = \lambda f$, where $f \in C_0(\R^N)$ is anti-symmetric in $x_1, x_2, \dots, x_m$, but $f$  is not necessarily positive on $\Omega$.
We give conditions on $f$ which imply that $T_{\max}(u_0) = T_{\max}(\lambda f) < \infty$ for all sufficiently small $\lambda > 0$.  Results of this
type were first obtained by Dickstein \cite{D}, and our methods are inspired by those in \cite{D}.  In addition, we obtain information about the
asymptotic behavior of  $T_{\max}(\lambda f)$ as $\lambda\searrow 0$
and as $\lambda\nearrow \infty$.

Before stating the principal new result of this type,  we complement our previous result of \cite[Theorem 1.2, p. 508]{TW-I}.

\begin{To}
\label{lifespanderiveedelta}
Let $a = 1$ in \eqref{NLHint} and let the positive integer $m$ and the real number
$\alpha$ be such that $$1\leq m \leq N,\; 0<\alpha<{2\over N+m}.$$
Let $f \in C_0(\R^N)$ be anti-symmetric with respect to
$x_1,x_2,\cdots, x_m$, and suppose that there exist $t_0>0$ and
$K>0$ such that
\begin{equation}
\label{tzero}
|f(x)|\leq K |\partial_{1}\partial_{2}\cdot \cdot \cdot
\partial_{m} G_{t_0}(x)|,\forall \; x\in \R^N.
\end{equation}
Moreover, assume that
\begin{equation}\label{gho}
K_0=\int_{\R^N}x_1x_2\cdots x_mf(x)dx \neq 0.
\end{equation}
Let $u_\lambda: [0,T_{\max}(\lambda f))\to C_0(\R^N)$ be the maximal solution of the integral equation \eqref{NLHint} with initial value $\lambda f$. By \cite[Theorem 1.2, p. 508]{TW-I}, $T_{\max}(\lambda f) < \infty$  for
$\lambda > 0$ sufficiently small. In addition, we have
$$\lim_{\lambda\searrow  0}\lambda^{[({1\over \alpha}-{N+m\over 2})^{-1}]}T_{\max}(\lambda f)=T_{\max}(u_0),$$
where
$u_0=K_0(-1)^m\partial_{1}\partial_{2}\cdot
\cdot \cdot\partial_{m}\delta,$
and $T_{\max}(u_0) < \infty$ is the maximal existence time of the solution $u$ of \eqref{NLHint} with initial data $u_0,$ constructed by \cite[Theorem 1.1, p. 506]{TW-I}.
\end{To}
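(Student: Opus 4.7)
The plan is to exploit the scaling invariance of \eqref{NLheat} to convert the asymptotic as $\lambda\searrow 0$ into a continuity assertion for the life span at the singular initial value $u_0=K_0(-1)^m\partial_1\partial_2\cdots\partial_m\delta$. Set $\beta=(\tfrac{1}{\alpha}-\tfrac{N+m}{2})^{-1}$ and $\kappa=\kappa(\lambda)=\lambda^{-\alpha/(2-\alpha(N+m))}$, which tends to $\infty$ as $\lambda\searrow 0$ since $\alpha<2/(N+m)$. Put $k=\kappa^{2/\alpha}$ and $f_\kappa=\kappa^{N+m}D_\kappa f$. The transformation $\tilde u(t,x)=k\,u(k^{\alpha}t,k^{\alpha/2}x)$ preserves the solution set of \eqref{NLheat}, and applied to $u_\lambda$ it yields a maximal solution with initial value $f_\kappa$ whose life span equals $\kappa^{-2}T_{\max}(\lambda f)$. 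This gives the identity
\begin{equation}
\label{planeq1}
\lambda^{\beta}T_{\max}(\lambda f)=T_{\max}(f_\kappa),
\end{equation}
so the theorem reduces to proving $T_{\max}(f_\kappa)\to T_{\max}(u_0)$ as $\kappa\to\infty$.

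The first main step is to verify that $f_\kappa$ converges to $u_0$ in a sense strong enough to apply the construction of \cite[Theorem~1.1]{TW-I} uniformly in $\kappa$. Since $f$ is anti-symmetric in $x_1,\dots,x_m$, every moment $\int y^{\beta'}f\,dy$ with $\beta'_j=0$ for some $1\le j\le m$ vanishes; a Taylor expansion of $\phi\in\mathcal S(\R^N)$ then gives
$$
\langle f_\kappa,\phi\rangle=\kappa^{m}\int f(y)\,\phi(y/\kappa)\,dy\ \longrightarrow\ K_0\,\partial_1\cdots\partial_m\phi(0)=\langle u_0,\phi\rangle,
$$
the remainder being controlled by the Gaussian bound \eqref{tzero}, so $f_\kappa\to u_0$ in $\mathcal S'(\R^N)$. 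The same bound rescales to $|f_\kappa(x)|\le K\,|\partial_1\cdots\partial_m G_{t_0/\kappa^{2}}(x)|$, and the semigroup property propagates this to the $\kappa$-uniform pointwise domination $|e^{t\Delta}f_\kappa(x)|\le K\,|\partial_1\cdots\partial_m G_{t+t_0/\kappa^{2}}(x)|$, which is bounded by a fixed multiple of $|e^{t\Delta}u_0|$ on the weighted space employed in \cite[Theorem~1.1]{TW-I}. Combined with the distributional convergence, this yields $e^{t\Delta}f_\kappa\to e^{t\Delta}u_0$ in that norm.

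With this preparation the two one-sided bounds on $T_{\max}(f_\kappa)$ follow standard reasoning. For $\liminf_\kappa T_{\max}(f_\kappa)\ge T_{\max}(u_0)$, fix $T<T_{\max}(u_0)$: the fixed-point ball used in \cite[Theorem~1.1]{TW-I} to produce the solution with data $u_0$ on $[0,T]$ remains a stable contraction neighborhood for every initial datum close enough to $u_0$ in the ambient norm, hence for $f_\kappa$ with $\kappa$ large, so the solution with data $f_\kappa$ exists on $[0,T]$. For $\limsup_\kappa T_{\max}(f_\kappa)\le T_{\max}(u_0)$, fix $T>T_{\max}(u_0)$ and pick $t^\ast<T_{\max}(u_0)$ with $\|u(t^\ast)\|_\infty$ as large as desired, possible since the solution $u$ with data $u_0$ blows up in $L^\infty$ at $T_{\max}(u_0)$. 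Continuous dependence on the compact interval $[0,t^\ast]$ transfers this largeness to $u_\kappa(t^\ast)$, where $u_\kappa$ is the solution with data $f_\kappa$, and the pointwise ODE comparison $y'=y^{1+\alpha}$ forces $T_{\max}(f_\kappa)\le t^\ast+\alpha^{-1}\|u_\kappa(t^\ast)\|_\infty^{-\alpha}<T$ for $\kappa$ large.

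The main technical obstacle is carrying out the convergence $e^{t\Delta}f_\kappa\to e^{t\Delta}u_0$ in the precise weighted functional framework of \cite{TW-I}: one must identify the exact space in which the fixed-point map of \cite[Theorem~1.1]{TW-I} is a contraction, verify the pointwise domination above with constants independent of $\kappa$, and pass from the distributional convergence to norm convergence. The hypothesis \eqref{tzero} is tailored precisely for this, since it makes each $f_\kappa$ sit inside a fixed multiple of the heat profile of $u_0$ at time $t_0/\kappa^{2}$, reducing the required continuity to a quantitative Taylor remainder estimate.
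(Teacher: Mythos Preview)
Your rescaling identity \eqref{planeq1} and the convergence $f_\kappa\to u_0$ in $\mathcal S'(\R^N)$, together with the uniform Gaussian domination, mirror exactly the setup of \cite[pp.~521--523]{TW-I} that the paper invokes; likewise, your lower bound $\liminf_\kappa T_{\max}(f_\kappa)\ge T_{\max}(u_0)$ is just the standard lower semicontinuity of the life span, obtained by combining the contraction argument with the $C_0$ theory.

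The upper bound step, however, contains a genuine error. The ODE comparison with $y'=y^{1+\alpha}$ goes the \emph{wrong way}: at a spatial maximum one has $\Delta u\le 0$, so $\partial_t\|u(t)\|_\infty\le\|u(t)\|_\infty^{1+\alpha}$, which yields the \emph{lower} bound $T_{\max}(f_\kappa)\ge t^\ast+\alpha^{-1}\|u_\kappa(t^\ast)\|_\infty^{-\alpha}$, not the upper bound you claim. A Kaplan-type eigenfunction argument would give the desired direction, but it requires nonnegativity of the solution on $\Omega$, which is not assumed here ($f$ is only anti-symmetric, not signed on $\Omega$). Knowing that $\|u_\kappa(t^\ast)\|_\infty$ is large simply does not, by itself, force imminent blow-up for sign-changing solutions.

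What is actually needed is the inequality $T_{\max}(f_\kappa)-t^\ast\le C\|u_\kappa(t^\ast)\|_\infty^{-\alpha}$, which is precisely the type~I blow-up estimate of Giga--Matsui--Sasayama \cite{GMS1,GMS2} (valid here since $\alpha<2/(N+m)$ forces $(N-2)\alpha<4$). The paper's proof uses exactly this: once $u_\kappa(t^\ast)\to u(t^\ast)$ in $C_0(\R^N)$, the continuity of the blow-up time for type~I solutions established in \cite{FMZ} gives $T_{\max}(u_\kappa(t^\ast))\to T_{\max}(u(t^\ast))$, and hence $T_{\max}(f_\kappa)\to T_{\max}(u_0)$. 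You should replace the ODE comparison with this appeal to \cite{GMS1,GMS2,FMZ}.
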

The previous result for $m=0$ is obtained in \cite[Theorem 1.4, p. 307]{D}.

We now consider initial data having polynomial decay at infinity. In \cite{LeeNi} the asymptotic behavior of the life span $T_{\max}(\lambda f)$ is studied  for small $\lambda>0$
and with $f \in C_b(\R^N)$,  $f\geq 0$ under the condition $\alpha \le 2/N$. (In this case $T_{\max}(\lambda f) < \infty$ by the Fujita results.)
It is shown in \cite[Theorem 3.15 (ii), p. 375]{LeeNi} and \cite[Theorem 3.21 (ii), p. 376]{LeeNi}, among other results,  that if $\alpha < 2/N$, $0 < \gamma < N$, and
\begin{equation}
\label{boundfmzero}
0 < \liminf_{|x|\to \infty}|x|^{\gamma}f(x) \le \limsup_{|x|\to \infty}|x|^{\gamma}f(x) < \infty,
\end{equation}
then
\begin{equation*}
0 < \liminf_{\lambda\to 0}\lambda^{[({1\over \alpha}-{\gamma\over 2})^{-1}]}T_{\max}(\lambda f) \le \limsup_{\lambda\to 0}\lambda^{[({1\over \alpha}-{\gamma\over 2})^{-1}]}T_{\max}(\lambda f) < \infty.
\end{equation*}
In \cite[Theorems 2.3 and 2.4, p. 1436]{MY} it is shown, for a certain class of $f$ (not necessarily positive) which satisfy the condition \eqref{conditionMY} with $0 < \gamma < 2/\alpha$ on the same type of ``thin" domain
used in \cite[Theorem 2.1, p. 1435]{MY} cited earlier, that $T_{\max}(\lambda f ) < \infty$ for all $\lambda > 0$ and
$$
\lambda^{\varepsilon}\leq\lambda^{[({1\over \alpha}-{\gamma\over 2})^{-1}]}T_{\max}(\lambda f)\leq \lambda^{-\varepsilon}\; \mbox{as}\; \lambda \searrow 0,
$$
for all $\varepsilon>0.$ In \cite[Theorem 1.3, p. 307]{D} it shown that if
$0 < \gamma <  N$, $\alpha < 2/\gamma$, $(N-2)\alpha < 4$ and
$\lim_{|x|\to \infty}|x|^{\gamma}f(x)=c \neq 0$ (without imposing that $f\geq 0$) then
$$\lim_{\lambda\searrow 0}\lambda^{[({1\over \alpha}-{\gamma\over 2})^{-1}]}T_{\max}(\lambda f)=C>0,$$
where $C>0$ is a finite constant.

The following two theorems extend and improve some of the above mentioned results.

\begin{To}[new Dickstein-type blow-up results]
\label{Dickstein-type} Let $a=1$ in \eqref{NLHint} and let the  integer $m$ and the real numbers $\alpha,\; \gamma$ be such that
$$0\leq m\leq N,\;  0<\gamma <N,\; 0<\alpha<{2\over \gamma+m},\; (N-2)\alpha<4.$$
Let $f\in C_0(\R^N)$, not necessarily positive, be anti-symmetric with respect to $x_1,\;x_2,\;\cdots,\; x_m$, and suppose that
\begin{equation}
\label{boundf}
0 < \liminf_{|x|\to \infty, \, x\in \Omega}\frac{|x|^{\gamma + 2m}}{ x_1\cdots x_m} f(x)\leq \limsup_{|x|\to \infty, \, x\in \Omega}\frac{|x|^{\gamma + 2m}}{ x_1\cdots x_m}f(x) < \infty.
\end{equation}
Then there exists $\lambda_0>0$ such that for all $\;0<\lambda<\lambda_0,$ the maximal solution $u_\lambda: [0,T_{\max}(\lambda f))\to C_0(\R^N)$ of the integral equation \eqref{NLHint}
with initial value $\lambda f$ blows up in finite time. Moreover,
\begin{equation}
\label{limsupliminf}
0 < \liminf_{\lambda\searrow 0}\lambda^{[({1\over \alpha}-{\gamma+m\over 2})^{-1}]}T_{\max}(\lambda f)\leq \limsup_{\lambda\searrow 0}\lambda^{[({1\over \alpha}-{\gamma+m\over 2})^{-1}]}T_{\max}(\lambda f) < \infty.
\end{equation}
\end{To}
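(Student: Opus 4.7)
The plan is to use the natural scaling of \eqref{NLheat} to convert the statement on the life-span of $u_\lambda$ into uniform estimates on a rescaled family $\{v_\lambda\}$ whose initial data are uniformly controlled in the norm underlying Theorem~\ref{th3bb}. Set $\beta=\frac{1}{\alpha}-\frac{\gamma+m}{2}>0$ and, for $\lambda>0$, $\mu=\mu(\lambda)=\lambda^{-1/(2\beta)}$, so that $\lambda\mu^{2\beta}=1$. The rescaled maximal solution $v_\lambda(t,x)=\mu^{2/\alpha}u_\lambda(\mu^2 t,\mu x)$ solves \eqref{NLHint} with $a=1$ and initial data $\psi_\lambda:=\mu^{\gamma+m}D_\mu f$, and satisfies $T_{\max}(\psi_\lambda)=\lambda^{1/\beta}T_{\max}(\lambda f)$. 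Writing $z_0(x)=x_1\cdots x_m|x|^{-(\gamma+2m)}$ --- which is proportional to $(-1)^m\partial_1\cdots\partial_m(|x|^{-\gamma})$ and homogeneous of degree $-(\gamma+m)$, so that $\nu^{\gamma+m}D_\nu z_0=z_0$ --- the identity $\psi_\lambda/z_0=(f/z_0)\circ(\mu\,\cdot)$ together with \eqref{boundf} yields the two-sided asymptotic bound
\begin{equation*}
c_1 z_0(x)\le\psi_\lambda(x)\le c_2 z_0(x),\qquad x\in\Omega,\;|x|\ge R_0/\mu,
\end{equation*}
uniformly in small $\lambda$, with $c_1,c_2>0$ inherited from \eqref{boundf}.

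For the $\liminf > 0$, I would use the upper half of \eqref{boundf} and the commutation \eqref{commutation} to establish
\begin{equation*}
\sup_{0<t\le T_0}t^{(\gamma+m)/2}\|e^{t\Delta}\psi_\lambda\|_{L^\infty(\R^N)}\le C,
\end{equation*}
uniformly in $\lambda$ small. Indeed $\|e^{t\Delta}\psi_\lambda\|_\infty=\mu^{\gamma+m}\|e^{\mu^2 t\Delta}f\|_\infty$, and $\|e^{\tau\Delta}f\|_\infty=O(\tau^{-(\gamma+m)/2})$ as $\tau\to\infty$ from the tail asymptotic of $f$. Since $\alpha<2/(\gamma+m)$, this is precisely the norm in which the contraction argument of Theorem~\ref{th3bb} provides local existence on a time interval depending only on $C$; hence $T_{\max}(\psi_\lambda)\ge T_0>0$ uniformly, and $\liminf_{\lambda\to 0}T_{\max}(\psi_\lambda)\ge T_0$.

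For the $\limsup<\infty$, the plan is to exploit the lower bound on $\psi_\lambda$ and, after a small time $\tau>0$, compare with a blowing-up solution supplied by Theorem~\ref{blowup}. Given any $\lambda_n\to 0$, a diagonal Banach--Alaoglu argument (using the uniform $L^\infty_{\rm loc}$-bound by $c_2 z_0$) extracts a subsequence along which $\psi_{\lambda_n}\to\psi^\ast$ weak-$\ast$ in $L^\infty_{\rm loc}(\Omega\setminus\{0\})$, with $\psi^\ast$ antisymmetric and $c_1 z_0\le\psi^\ast\le c_2 z_0$ on $\Omega$. For fixed small $\tau>0$, a continuous-dependence argument (combining the Duhamel formula, the uniform bound of the previous paragraph, and the smoothing of $G_\tau$) gives $v_{\lambda_n}(\tau)\to e^{\tau\Delta}\psi^\ast$ in $L^\infty(\R^N)$; since $\psi^\ast-c_1 z_0$ is antisymmetric and nonnegative on $\Omega$, this is preserved by the heat semigroup, so $e^{\tau\Delta}\psi^\ast\ge c_1 e^{\tau\Delta}z_0$ on $\Omega$, and for $n$ large,
\begin{equation*}
v_{\lambda_n}(\tau,x)\ge\tfrac{c_1}{2}e^{\tau\Delta}z_0(x),\qquad x\in\Omega.
\end{equation*}
The function $h:=\tfrac{c_1}{2}e^{\tau\Delta}z_0$ is antisymmetric, bounded above by a multiple of $z_0$, and by the homogeneity of $z_0$ satisfies $\nu^{\gamma+m}D_\nu h\to\tfrac{c_1}{2}z_0\not\equiv 0$ in $\mathcal{D}'(\Omega)$ as $\nu\to\infty$; Theorem~\ref{blowup} therefore yields a finite blow-up time $T^\ast=T_{\max}(h)$ depending only on $c_1$. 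The parabolic comparison principle on $\Omega$ then gives $T_{\max}(\psi_{\lambda_n})\le\tau+T^\ast$, and since every sequence admits such a subsequence, $\limsup_{\lambda\to 0}T_{\max}(\psi_\lambda)\le\tau+T^\ast<\infty$.

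The main obstacle is the continuous-dependence step in the limsup argument: one must pass from $\psi_{\lambda_n}\to\psi^\ast$ (only weak-$\ast$ in $L^\infty_{\rm loc}(\Omega\setminus\{0\})$, with a mass of order $\mu^{\gamma+m}$ concentrating near the origin) to $v_{\lambda_n}(\tau)\to e^{\tau\Delta}\psi^\ast$ in $L^\infty(\R^N)$, uniformly in $n$. The hypothesis $(N-2)\alpha<4$ is used precisely here: it is the subcriticality condition that keeps the nonlinear Duhamel correction uniformly small and, combined with the regularizing action of $e^{\tau\Delta}$, upgrades the weak-$\ast$ convergence of the initial data to $L^\infty$ convergence of the solutions at time $\tau$. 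With this step in hand, all remaining arguments are routine combinations of the parabolic comparison principle on $\Omega$ with Theorems~\ref{th3bb} and~\ref{blowup}.
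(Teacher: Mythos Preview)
Your rescaling and the identification of subsequential limits $\psi^\ast$ with $c_1\psi_0\le\psi^\ast\le c_2\psi_0$ on $\Omega$ are the right starting point, and match the paper. But two of your key claims do not hold. In the $\limsup$ argument, $v_{\lambda_n}$ is the \emph{nonlinear} flow from $\psi_{\lambda_n}$, so any continuous-dependence statement gives $v_{\lambda_n}(\tau)\to v^\ast(\tau)$ with $v^\ast$ the nonlinear solution with datum $\psi^\ast$, not $e^{\tau\Delta}\psi^\ast$. More seriously, convergence in $L^\infty(\R^N)$ cannot produce the pointwise inequality $v_{\lambda_n}(\tau)\ge\tfrac{c_1}{2}e^{\tau\Delta}z_0$ on $\Omega$: the right-hand side vanishes on $\partial\Omega$, so an $L^\infty$-small error is not small relative to it near the boundary, and the comparison step with $h$ breaks down there. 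Your $\liminf$ argument has a related gap: the contraction behind Theorem~\ref{th3bb} (see Theorem~\ref{th4}) is run in the weighted space $\{|u(t)|\le M\,e^{t\Delta_\Omega}\psi_0\}$ and needs $|\psi_\lambda|\le K\psi_0$ pointwise on $\Omega$, not merely the scalar bound $t^{(\gamma+m)/2}\|e^{t\Delta}\psi_\lambda\|_\infty\le C$; a generic antisymmetric $f\in C_0(\R^N)$ need not satisfy the former near $\partial\Omega$.

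The paper uses $(N-2)\alpha<4$ differently from what you propose: it is the subcritical condition guaranteeing, via \cite{GMS1,GMS2}, that every finite-time blowup is of type~I, which in turn makes $T_{\max}$ a \emph{continuous} function on $C_0$ by \cite{FMZ}. Once one has $v_{\lambda_n}(\delta)\to u(\delta)$ in $C_0(\R^N)$ for a small fixed $\delta>0$ (this is the weak-$\ast$ continuous-dependence result Theorem~\ref{th3'}(iii)), continuity of $T_{\max}$ gives $T_{\max}(\psi_{\lambda_n})\to T_{\max}(\psi^\ast)$ directly, with no need for a pointwise comparison at time $\tau$. Comparison is then applied only at the level of the \emph{limiting} datum: $c_1\psi_0\le\psi^\ast\le c_2\psi_0$ yields $T_{\max}(c_2\psi_0)\le T_{\max}(\psi^\ast)\le T_{\max}(c_1\psi_0)<\infty$ by Theorem~\ref{globalproperties}(iii), and \eqref{limsupliminf} follows by a standard subsequence argument. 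Thus $(N-2)\alpha<4$ is about continuity of the blowup time, not about the size of the Duhamel remainder.
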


\begin{rems}$\;${\rm

\begin{itemize}
\item[(i)] Dickstein proved the first small-$\lambda$  blow-up results  in \cite{D}.  Theorem 1.1 in \cite{D} assumes $\alpha<2/N$ with $f\in L^1(\R^N)\cap C_0(\R^N)$ and $\int f\not=0.$
Subsequent results of this type were obtained in \cite{G1} for $\alpha<1/(N+1)$ for $f\in L^1(\R^N)\cap C_0(\R^N)$ and $\int x_1f\not=0$ and later in \cite{TW-I}
for $\alpha<2/(N+m)$ and $f\in L^1(\R^N)\cap C_0(\R^N)$ with $\int x_1\cdots x_m f\not=0.$
 \item[(ii)] Theorem \ref{Dickstein-type} broadens the range of initial data in the
 case $m = 0$ as treated in \cite[Theorems 1.2 and 1.3, pp. 306-307]{D}
 (but not for the single point blowup result in \cite[Theorem 1.3, p. 307]{D}).
 Furthermore,  it extends the results
 to the cases $1 \le m \le N$.     Theorem \ref{Dickstein-type} also improves the range of
 allowable $\alpha$ compared to \cite{TW-I}
since we may take $2/(N+m)\leq \alpha<2/(\gamma+m).$
Note that both \cite[Theorem 1.2, p. 306]{D} and Theorem \ref{Dickstein-type}
above allow values of $\alpha > 2/N$.
 \item[(iii)]  The hypothesis $(N-2)\alpha<4$  implies type I blowup, a condition
 used in the proof in an essential way.  This hypothesis also is needed in
\cite[Theorems 1.2 and 1.3, pp. 306-307]{D}.
\item[(iv)] If $\alpha>{2\over N}$ and  $\varphi\in L^{q_c}(\R^N),\; q_c={N\alpha\over 2}$, then by \cite[Theorem 3(b)]{W2}, $T_{\max}(\lambda\varphi)=\infty$ for $\lambda$ sufficiently small.
(The positivity condition in \cite{W2} is clearly not needed.) If we consider Theorem \ref{Dickstein-type}
above in the case $\alpha>{2\over N}$, it is clear that
$f \not\in L^{q_c}(\R^N)$.  Indeed, for $f$ to be in $L^{q_c}(\R^N)$ would require that $N(\gamma + m)\alpha/2 > N$, i.e. $\alpha > 2/(\gamma + m)$.
\item[(v)] Let $\alpha>{2\over \gamma+m}$.  There exist  $f\in C_0(\R^N)$,  anti-symmetric with respect to $x_1,\;x_2,\;\cdots,\; x_m$,
satisfying  \eqref{boundf} such that $T_{\max}(\lambda f)=\infty$ for $\lambda$ sufficiently small. This follows by Proposition \ref{optimalxrho} below and a reflection argument.  If in addition, $\alpha > 2/N$, it also follows since such $f$ are in $L^{q_c}(\R^N)$ as described in the part (iv) above.
\end{itemize}}
\end{rems}
The following result gives a refinement of Theorem~\ref{Dickstein-type}.

\begin{To}
\label{Dickstein-type2} Let $a = 1$ in \eqref{NLHint} and let the  integer $m$ and the real numbers $\alpha,\; \gamma$ be such that
$$0\leq m\leq N,\;  0<\gamma <N,\; 0<\alpha<{2\over \gamma+m},\; (N-2)\alpha<4.$$
Let $f\in C_0(\R^N)$, not necessarily positive on $\Omega$, be anti-symmetric with respect to $x_1,\;x_2,\;\cdots,\; x_m$, and suppose that
\begin{equation}
\label{initialinfmodulef2}
|f(x)| \le K x_1x_2\cdots x_m|x|^{-\gamma-2m},\;  x\in \Omega,
\end{equation}
for some $K>0$.
Suppose there exist a sequence  $\mu_n \to \infty$ and
$z \in L^1_{\rm{loc}}\left(\R^N\setminus\{0\}\right)$,
anti-symmetric with respect to $x_1,\; x_2,\; \cdots,\; x_m$,
 such that
\begin{equation}
\label{subseqlim2}
\mu_n^{\gamma + m}D_{\mu_n}f_{|\Omega}\to z_{|\Omega}
\end{equation}
in ${\mathcal D'}(\Omega)$, as $n \to \infty$, so that in particular,
$z$ satisfies the same estimate \eqref{initialinfmodulef2}.
Let $T_{\max}(z)$ be the maximal existence time of the solution
to \eqref{NLHint} with $u_0 = z$, as constructed in Theorem~\ref{th3bb}.
It follows that
\begin{equation}
\label{lifespanlim}
\lambda_n^{[({1\over \alpha}-{\gamma + m\over 2})^{-1}]}T_{\max}(\lambda_n f) \to T_{\max}(z)
\end{equation}
where $\lambda_n = \mu_n^{-[{2\over \alpha}-(\gamma + m)]} \to 0$.
In particular, if $T_{\max}(z) < \infty$, then $T_{\max}(\lambda_n f) < \infty$ for all sufficiently large $n$.
\end{To}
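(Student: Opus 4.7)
The strategy is to rescale and reduce the problem to continuous dependence of the life span on initial data. Set $\lambda_n = \mu_n^{-[2/\alpha-(\gamma+m)]}$, so that $\mu_n^{2/\alpha}\lambda_n = \mu_n^{\gamma+m}$. The scale invariance of \eqref{NLheat} (cf.\ \eqref{commutation}) implies that
$$
v_n(t,x) := \mu_n^{2/\alpha}\,u_{\lambda_n}\!\left(\mu_n^2 t,\mu_n x\right)
$$
is the maximal $C_0(\R^N)$-solution of \eqref{NLHint} with initial value $z_n := \mu_n^{\gamma+m}D_{\mu_n}f$, and its life span is $T_{\max}(v_n) = \mu_n^{-2}T_{\max}(\lambda_n f)$, which equals $\lambda_n^{[({1\over \alpha}-{\gamma+m\over 2})^{-1}]} T_{\max}(\lambda_n f)$ by the choice of $\lambda_n$. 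Thus the claim \eqref{lifespanlim} reduces to the single assertion $T_{\max}(v_n) \to T_{\max}(z)$ as $n\to\infty$.

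Next, observe that the bound \eqref{initialinfmodulef2} is invariant under the rescaling $f\mapsto \mu^{\gamma+m}D_\mu f$ on $\Omega$, so $|z_n(x)| \le K x_1\cdots x_m |x|^{-\gamma-2m}$ uniformly in $n$, and the same bound is inherited by $z$ via \eqref{subseqlim2}. Consequently $z$ falls within the class of initial data handled by Theorem~\ref{th3bb} (and more precisely by Theorem~\ref{th3'}), so $u_z$ is well defined on $[0,T_{\max}(z))$. Combining the uniform pointwise bound on $z_n$ with dominated convergence upgrades the distributional convergence \eqref{subseqlim2} to convergence of the heat extensions $e^{t\Delta}z_n \to e^{t\Delta}z$ in the weighted norm in which the fixed-point scheme of Theorem~\ref{th3'} produces the solutions. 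Continuous dependence from that theorem then yields $v_n \to u_z$ uniformly on every compact subinterval of $(0,T_{\max}(z))$, which immediately gives $\liminf_{n\to\infty} T_{\max}(v_n) \ge T_{\max}(z)$.

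The reverse inequality is trivial if $T_{\max}(z) = \infty$, so assume $T_{\max}(z) < \infty$. Maximality of $u_z$ forces $\|u_z(t)\|_{L^\infty(\R^N)} \to \infty$ as $t\nearrow T_{\max}(z)$, while the standard local well-posedness estimate in $C_0(\R^N)$ furnishes $c_\alpha > 0$ with
$$
\|v_n(t)\|_{L^\infty} \le c_\alpha\,(T_{\max}(v_n) - t)^{-1/\alpha},\qquad 0\le t < T_{\max}(v_n).
$$
Given $M > 0$, pick $t^{\ast} < T_{\max}(z)$ with $\|u_z(t^{\ast})\|_{L^\infty} > 2M$; by the continuous dependence of $v_n$ on $[0,t^{\ast}]$ established above we obtain $\|v_n(t^{\ast})\|_{L^\infty} > M$ for all sufficiently large $n$, and the local-existence bound then produces $T_{\max}(v_n) \le t^{\ast} + c_\alpha M^{-\alpha}$. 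Letting $M \to \infty$ with $t^{\ast} \nearrow T_{\max}(z)$ yields $\limsup_n T_{\max}(v_n) \le T_{\max}(z)$, completing the proof. I expect the main technical obstacle to be the continuous-dependence step: passing from the merely distributional convergence $z_n \to z$ on $\Omega$ to convergence of $v_n$ in $L^\infty$ on compact sectors of $(0,T_{\max}(z))$ requires the weighted-norm framework of Theorem~\ref{th3'}, and the type-I control of blowup of $u_z$ near $T_{\max}(z)$---guaranteed by the subcritical hypothesis $(N-2)\alpha<4$---is what allows this control to be maintained right up to the blowup time.
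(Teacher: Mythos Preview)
Your overall strategy---rescale, use Theorem~\ref{th3'} for continuous dependence near $t=0$, then invoke continuity of the blowup time---matches the paper's proof (given there as Theorem~4.4). The rescaling computation and the $\liminf$ inequality are fine.

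The $\limsup$ argument, however, has a real gap. The inequality
\[
\|v_n(t)\|_{L^\infty} \le c_\alpha (T_{\max}(v_n)-t)^{-1/\alpha}
\]
is \emph{not} a local well-posedness estimate; local existence in $C_0$ gives the reverse direction $\|v_n(t)\|_{L^\infty}\ge c(T_{\max}(v_n)-t)^{-1/\alpha}$ (the standard lower blowup rate). What you have written is precisely the type~I \emph{upper} blowup rate of Giga--Matsui--Sasayama, which is where $(N-2)\alpha<4$ enters. More seriously, even granting the GMS bound, your implication ``$\|v_n(t^\ast)\|_\infty>M\Rightarrow T_{\max}(v_n)\le t^\ast+c_\alpha M^{-\alpha}$'' fails: the type~I estimate applies only to solutions that actually blow up, and for sign-changing data (which you explicitly allow) a large sup norm does not force finite-time blowup. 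So you cannot rule out $T_{\max}(v_n)=\infty$ along a subsequence this way.

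The paper handles this step by citing Fermanian-Kammerer--Merle--Zaag for the \emph{continuity} of the blowup time under $C_0$-perturbation of initial data, given that the limiting solution is type~I. That result relies on the classification and stability of blowup profiles and is substantially deeper than the type~I rate alone; it is the correct black box to invoke here. Concretely: fix $0<\delta<T$ from Theorem~\ref{th3'}, note $v_n(\delta)\to u_z(\delta)$ in $C_0(\R^N)$, observe (via GMS) that $u_z$ blows up type~I, and then apply FMZ to conclude $T_{\max}(v_n(\delta))\to T_{\max}(u_z(\delta))$.
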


The following result shows that the second inequality in \eqref{limsupliminf} may be strict.
\begin{cor}
\label{nonexistlimit} Let $a=1.$ Let the  integer $m$ and the real numbers $\alpha,\; \gamma$ be such that
$$0\leq m\leq N,\;  0<\gamma <N,\; 0<\alpha<{2\over \gamma+m},\; (N-2)\alpha<4.$$
Then there exists $f\in C_0(\R^N)$ satisfying the hypotheses of Theorem \ref{Dickstein-type} such that
$$
\liminf_{\lambda\to 0}\lambda^{[({1\over \alpha}-{\gamma+m\over 2})^{-1}]}T_{\max}(\lambda f) < \limsup_{\lambda\to 0}\lambda^{[({1\over \alpha}-{\gamma+m\over 2})^{-1}]}T_{\max}(\lambda f).
$$
\end{cor}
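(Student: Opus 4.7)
The plan is to construct a single $f$ whose parabolic rescalings $\mu^{\gamma + m} D_\mu f$ admit two different limits $c_1 W$ and $c_2 W$ in $\mathcal{D}'(\Omega)$ along two sequences $\mu_n \to \infty$, where $W(x) := x_1 x_2 \cdots x_m |x|^{-\gamma - 2m}$ (empty product $= 1$ when $m = 0$). Theorem~\ref{Dickstein-type2} will then produce two distinct limit values $T_{\max}(c_1 W)$ and $T_{\max}(c_2 W)$ for the rescaled life span $\lambda^{[(1/\alpha - (\gamma + m)/2)^{-1}]} T_{\max}(\lambda f)$ along the corresponding $\lambda_n \to 0^+$, forcing $\liminf < \limsup$.

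Fix $0 < c_1 < c_2$ and disjoint interleaved families of intervals $\{[e^{a_n}, e^{b_n}]\}_n$ and $\{[e^{a_n'}, e^{b_n'}]\}_n$ contained in $(1, \infty)$, all tending to $\infty$, with $\min(b_n - a_n, b_n' - a_n') \to \infty$. Choose a continuous $\eta : [0, \infty) \to [0, c_2]$ with $\eta \equiv 0$ on $[0, 1]$, $\eta \equiv c_1$ on each $[e^{a_n}, e^{b_n}]$, $\eta \equiv c_2$ on each $[e^{a_n'}, e^{b_n'}]$, and continuously interpolated in the gaps. Define $f(x) := \eta(|x|) W(x)$ for $x \ne 0$ and $f(0) := 0$. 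The factor $x_1 \cdots x_m$ makes $f$ anti-symmetric in $x_1, \ldots, x_m$; the cutoff $\eta \equiv 0$ near the origin removes the singularity of $W$, so $f$ is continuous; and $|f(x)| \le c_2 |x|^{-(\gamma + m)}$, so $f \in C_0(\R^N)$. On $\Omega$ we have $|x|^{\gamma + 2m}(x_1 \cdots x_m)^{-1} f(x) = \eta(|x|)$, which takes the values $c_1$ and $c_2$ on arbitrarily distant annuli; hence condition \eqref{boundf} holds with $\liminf = c_1 > 0$ and $\limsup = c_2 < \infty$, and $|f| \le c_2 W$ on $\Omega$, i.e.\ \eqref{initialinfmodulef2} with $K = c_2$. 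So the hypotheses of Theorem~\ref{Dickstein-type} and of Theorem~\ref{Dickstein-type2} both hold.

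Set $\mu_n := \exp((a_n + b_n)/2)$ and $\tilde\mu_n := \exp((a_n' + b_n')/2)$. The homogeneity $W(\mu x) = \mu^{-(\gamma + m)} W(x)$ gives
$$\mu^{\gamma + m} f(\mu x) = \eta(\mu |x|)\, W(x), \qquad x \ne 0,\; \mu > 0.$$
For any compact $K \subset \R^N \setminus \{0\}$ and $n$ large, $(b_n - a_n)/2$ exceeds $\max_{x \in K} |\log |x||$, so $\mu_n |x| \in [e^{a_n}, e^{b_n}]$ for every $x \in K$, giving $\eta(\mu_n |x|) = c_1$ and hence $\mu_n^{\gamma + m} D_{\mu_n} f \equiv c_1 W$ on $K$; analogously $\tilde\mu_n^{\gamma + m} D_{\tilde\mu_n} f \equiv c_2 W$ on $K$ for large $n$. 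Therefore $\mu_n^{\gamma + m} D_{\mu_n} f \to c_1 W$ and $\tilde\mu_n^{\gamma + m} D_{\tilde\mu_n} f \to c_2 W$ in $\mathcal{D}'(\Omega)$. Theorem~\ref{Dickstein-type2} then yields
$$\lambda_n^{[(1/\alpha - (\gamma + m)/2)^{-1}]} T_{\max}(\lambda_n f) \to T_{\max}(c_1 W), \qquad \tilde\lambda_n^{[(1/\alpha - (\gamma + m)/2)^{-1}]} T_{\max}(\tilde\lambda_n f) \to T_{\max}(c_2 W),$$
with $\lambda_n = \mu_n^{-(2/\alpha - \gamma - m)} \to 0^+$ and $\tilde\lambda_n = \tilde\mu_n^{-(2/\alpha - \gamma - m)} \to 0^+$.

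It remains to show $T_{\max}(c_1 W) \ne T_{\max}(c_2 W)$. Since $W$ is homogeneous of degree $-(\gamma + m)$, the parabolic scaling $v(t, x) := \kappa^{2/\alpha} u(\kappa^2 t, \kappa x)$ transforms the maximal solution of \eqref{NLHint} (in the uniqueness class of Theorem~\ref{th3bb}) with initial data $c W$ into the maximal solution with initial data $c \kappa^{2/\alpha - (\gamma + m)} W$, the uniqueness class being invariant under this transformation; this yields the scaling law
$$T_{\max}(c' W) = (c/c')^{[(1/\alpha - (\gamma + m)/2)^{-1}]}\, T_{\max}(c W), \qquad c, c' > 0.$$
Finiteness $T_{\max}(c_1 W) < \infty$ follows from Theorem~\ref{blowup} applied to $\tilde f := \chi(|x|) c_1 W(x) \in C_0(\R^N)$, where $\chi$ is a smooth cutoff equal to $0$ on $[0,1]$ and $1$ on $[2,\infty)$: $\tilde f$ is anti-symmetric, $0 \le \tilde f \le c_1 W$ on $\Omega$, $\tilde f$ satisfies \eqref{initialinfmodulef}, and $\lambda^{\gamma + m} D_\lambda \tilde f \to c_1 W \not\equiv 0$, so Remark~\ref{compact}(i) permits the application with $u_0 = c_1 W$. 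Together with the scaling law, $c_1 < c_2$ forces $T_{\max}(c_1 W) > T_{\max}(c_2 W) > 0$, so the two limits above are distinct, which gives the strict inequality in the statement. The main technical point is the $\mathcal{D}'(\Omega)$-convergence of the rescalings, which is immediate from the annular construction of $\eta$.
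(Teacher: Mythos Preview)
Your proof is correct and follows essentially the same strategy as the paper: exhibit $f$ whose dilation orbit has two distinct limits $c_1\psi_0$ and $c_2\psi_0$ (your $W$ is $\psi_0/c_{m,\gamma}$), invoke Theorem~\ref{Dickstein-type2} along the two sequences, and use the scaling identity for $T_{\max}(c\psi_0)$ together with its finiteness to separate the two limits. The only differences are cosmetic: the paper cites \cite{CDW-DCDS} and \cite{MTW} for the existence of such an $f$ with $\mathcal{Z}(f)=\{c\psi_0: c_1\le c\le c_2\}$, whereas you build one explicitly via the oscillating radial profile $\eta$; and the paper reads off $T_{\max}(c\psi_0)<\infty$ and the scaling law directly from Theorem~\ref{globalproperties}(iii) (equivalently the last statement of Theorem~\ref{th3}), whereas you rederive the scaling law and route finiteness through Theorem~\ref{blowup} and Remark~\ref{compact}(i). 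One small point worth tightening: your claim that $\liminf_{|x|\to\infty}\eta(|x|)=c_1$ relies on the interpolation staying in $[c_1,c_2]$ past the first plateau, which you should state explicitly (e.g.\ take $\eta$ piecewise linear between consecutive plateau values).
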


\begin{rems}$\;${\rm
\begin{itemize}
\item[(i)]
As noted in Remark~\ref{compact}(ii), it follows from the considerations in \cite[Section 2]{CDW-DCDS} and \cite[Sections 3 and 5]{MTW} that under condition \eqref{initialinfmodulef2},
for any sequence $\mu_n \to \infty$ there is a subsequence, which we still call
 $\mu_n$, and $z \in L^1_{\rm{loc}}\left(\R^N\setminus\{0\}\right)$,
anti-symmetric with respect to $x_1,\;x_2,\;\cdots,\; x_m$,
 such that \eqref{subseqlim2} holds.  In fact, one can easily choose $f$ which admit two such limits $z_1$ and $z_2$ with
 $0 < T_{\max}(z_1) < T_{\max}(z_2) < \infty$.
 See \cite[Theorem 1.2 and Proposition 2.9]{CDW-DCDS} and \cite[Theorem 1.4]{MTW}.
For such a function $f$,
 $\lim_{\lambda\to 0}\lambda^{[({1\over \alpha}-{\gamma+m\over 2})^{-1}]}T_{\max}(\lambda f)$ does not exist and takes different finite nonzero values for different sequences $\lambda_n\to \infty.$
See the proof of Corollary~\ref{nonexistlimit} in Section~\ref{blowup-span}
for more details.
   \item[(ii)] The situation described in Corollary~\ref{nonexistlimit} is
   quite different from previously known life-span results.  For example, if
   $f\in C_b(\R^N)$, $f \ge 0$, and
   $\lim_{|x|\to \infty}f(x)=f_\infty > 0$, then
    $$
    \lim_{\lambda \to 0}\lambda^{\alpha}T_{\max}(\lambda f) = \frac{1}{\alpha}f_\infty^{-\alpha}.
    $$
    Furthermore,
   when $f\in C_b(\R^N)$, $f \ge 0$, $f \not\equiv 0$, then
     $$
     \lim_{\lambda \to \infty}\lambda^{\alpha}T_{\max}(\lambda f)
     = \frac{1}{\alpha}||f||_\infty^{-\alpha}.
     $$
 For these two results, see \cite[Theorem 1, p. 167]{GW}.  See Proposition~\ref{nonnonexistlimit} below for a result analogous to
  Corollary~\ref{nonexistlimit}, but with $\lambda \to \infty$, and for which
  $T_{\max}(\lambda f) \sim     \lambda^{[-({1\over \alpha}-{\gamma + m\over 2})^{-1}]}$
  as $\lambda \to \infty$, instead of $\lambda^{-\alpha}$.
  \item[(iii)]  If $\frac{2}{N} < \alpha < \frac{2}{\gamma + m}$, there exists
  $f$ satisfying the hypotheses of Theorem~\ref{Dickstein-type2}, $f_{|\Omega} \ge 0$,
  for which two functions $z_1$ and $z_2 \not\equiv 0$ can be obtained
  as limits as in \eqref{subseqlim2} and such that
  $0 < T_{\max}(z_1) < T_{\max}(z_2) = \infty$.
   In this case, there is a sequence $\lambda_n \to 0$ for which
   $$
   \lambda_n^{[({1\over \alpha}-{\gamma + m\over 2})^{-1}]}T_{\max}(\lambda_n f) \to T_{\max}(z_1) < \infty,
   $$
   and another sequence $\lambda'_n \to 0$ for which
   $$
   (\lambda'_n)^{[({1\over \alpha}-{\gamma + m\over 2})^{-1}]}T_{\max}(\lambda'_n f) \to \infty,
   $$
Moreover, since $f \ge 0$ on $\Omega$, it follows from
   Theorem~\ref{blowup} that $T_{\max}(\lambda f) < \infty$ for all $\lambda > 0$, and it must be a monotone function of $\lambda >0$ (by comparison).
 This example shows shows that $\lambda^{[({1\over \alpha}-{\gamma + m\over 2})^{-1}]}T_{\max}(\lambda f)$ need not be monotone,
   and can exhibit qualitatively different behavior along different sequences of $\lambda \to 0$.

   To justify the existence of such $z_1$ and $z_2$, see
    \cite[Lemma 2.7]{CDW-DCDS}, as well as the proof of \cite[Theorem 1.2, p. 1114]{CDW-DCDS}   and the proof of \cite[Theorem 1.4, p. 358]{MTW}.
In order to guarantee that $T_{\max}(z_2) = \infty$, it suffices to
require that $||z_2||_{L^{q_c}}$ be small, where $q_c = N\alpha/2$.
\item[(iv)] It is shown in \cite[Theorem 1.3, p. 307]{D} that if $\lim_{|x|\to \infty}|x|^\gamma f(x)=c>0$ then
\begin{equation*}
\lim_{\lambda\to 0}\lambda^{[({1\over \alpha}-{\gamma\over 2})^{-1}]}T_{\max}(\lambda f)
\end{equation*}
 exists and is finite. This is a special case of Theorem~\ref{Dickstein-type2} for $m = 0$.  For $1 \le m \le N$  Theorem~\ref{Dickstein-type2} implies that if $\lim_{|x|\to \infty}{|x|^{\gamma+2m}\over x_1x_2\cdots x_m}f(x)=c>0$ then
 \begin{equation*}
 \lim_{\lambda\to 0}\lambda^{[({1\over \alpha}-{\gamma+m\over 2})^{-1}]}T_{\max}(\lambda f)
 \end{equation*}
 exists and is finite.
  \item[(v)] It is possible that, using the ideas described in \cite[Section 7]{CDWsurvey}, one could construct initial values $f$ for which
  $\lambda^{\sigma}T_{\max}(\lambda f)$ admits different finite, nonzero limits along different subsequences $\lambda_n \to 0$, and for different values of $\sigma > 0$.
\end{itemize}
}
\end{rems}

 See also Remark~\ref{limitexist} below.

We close the introduction with a few words about the proofs.
As mentioned earlier, the basic approach is to study equation \eqref{NLHint}
on the domain $\Omega$ defined by \eqref{dmn}, and then to reformulate
the results for solutions on $\R^N$ which are anti-symmetric with respect to
$x_1, x_2, \dots, x_m$.

Theorem \ref{th1} and Theorem \ref{lifespanderiveedelta} follow from straightforward adaptations of the methods in \cite{TW-I}, using in addition in the case of Theorem \ref{lifespanderiveedelta},
 that  the blowing up solutions are type I \cite{GMS1,GMS2} and the continuity of the maximal time of existence shown in \cite{FMZ}.

 To prove the existence results Theorems \ref{th3} and \ref{th3bb},
 the methods of \cite{TW-I} do not directly apply.
Since the distribution \eqref{initialdelta} is in fact $0$ as an element of
${\mathcal D'}(\Omega)$, in \cite{TW-I}  the integral equation \eqref{NLHint} was studied on $\Omega$ in the form
 \begin{equation}
\label{NLHintTW-I} u(t) = \varphi(t)+ \int_0^t {\rm
e}^{(t-\sigma)\Delta} \big[|u(\sigma)|^\alpha u(\sigma)\big]
d\sigma,
\end{equation}
for $t > 0$, without any specified initial value.  In this paper, we need to make
explicit the initial values described in Theorem~\ref{th3bb},
(in terms of their restriction to $\Omega$).  The key hypothesis is that
\begin{equation}
\label{hypintro}
\int_0^T \left\|e^{t\Delta_\Omega}u_0\right\|_\infty^\alpha dt<\infty,
\end{equation}
 where  $u_0$ is given by \eqref{initialmodule}, but restricted to $\Omega$, and
  $e^{t\Delta_\Omega}$ is the heat semigroup in $C_0(\Omega)$.
 This condition enables one to do a contraction mapping argument to
 prove existence of solutions to the integral equation \eqref{NLHint} on
 $\Omega$ which are bounded by a multiple of $e^{t\Delta_\Omega}u_0$.
 We make extensive use of the results in \cite{MTW}, which is devoted
 to the study of $e^{t\Delta_\Omega}$ on the set of initial values of
 Theorem~\ref{th3bb} (again, restricted to $\Omega$).

 As for the blowup results,
the proof of Theorem \ref{blowup} is a relatively straightforward application
of \cite[Theorem 1]{W4} along with \cite[Theorem 1.1(ii)]{MTW}, or in the case $m = 0$,
\cite[Proposition 3.8(i)]{CDW-DCDS}.  It is interesting to note
that the current paper, along with \cite{LeeNi,SW,MY},
use four essentially different methods to obtain the same type of
results.  In \cite{LeeNi}, Kaplan's eigenvalue blowup method on a centered
ball is used;  in \cite{SW}, self-similar subsolutions are used; and
in \cite{MY} energy methods with self-similar variables are used.

The proofs of Theorem~\ref{Dickstein-type} and  Theorem~\ref{Dickstein-type2} follow the basic plan developed in
\cite{D}, using some results from \cite{GMS1,GMS2,FMZ} on type I blowup
and the continuity of the blowup time.  In addition, the local theory of
Theorem~\ref{th3bb}, including the continuous dependence result Theorem~\ref{th3'}, below, is essential.

The rest of this paper is organized as follows.  In Section \ref{xmgamma}, we prove well-posedness on sectors $\Omega$ for the nonlinear heat equation \eqref{NLHint}
with initial data bounded by the distribution \eqref{ex2}. See the definition of the space ${\mathcal{X}}$ given by \eqref{spc} below. We also establish some nonexistence,
blowup and global existence results.
See Theorem \ref{globalproperties} and Proposition \ref{optimalxrho} below.   In Section \ref{Wmgamma} we apply the results of Section \ref{xmgamma}
to solutions on $\R^N$ which are anti-symmetric in $x_1, x_2, x_3, \cdots, x_m$. In particular, we establish the local existence
  with initial data of the form \eqref{ex2} and prove Theorems \ref{th1}, \ref{th3} and \ref{th3bb}. In Section~\ref{blowup-span},
we prove Theorems \ref{blowup}, \ref{lifespanderiveedelta}, \ref{Dickstein-type}, \ref{Dickstein-type2} and Corollary \ref{nonexistlimit}.

\section{Well-posedness on sectors}
\label{xmgamma}
\setcounter{equation}{0}
The purpose of this section is to study well-posedness of the integral equation \eqref{NLHint}
on the sectors $\Omega \subset \R^N$ defined by \eqref{dmn},
with the integer $m \in [1,N]$.
To be more precise, we first
recall that the heat semigroup on $\Omega$, which we denote
$e^{t\Delta_\Omega}$,
 is given by
\begin{equation} \label{sgomg}%
e^{t \Delta_\Omega} f(x)= \displaystyle{\int_{\Omega}} K_t(x,y) f(y) dy ,\; t>0,
\end{equation}
where
\begin{equation} \label{hks}
K_t (x, y) = (4\pi t)^{-\frac{N}{2}} \displaystyle \prod_{j=m+1}^{N}
e^{-\frac{|x_j-y_j|^2}{4t}} \prod_{i=1}^{m}
\left[e^{-\frac{|x_i-y_i|^2}{4t}}-e^{-\frac{|x_i+y_i|^2}{4t}} \right].
\end{equation}
See, for example, \cite{TW-I}. It is well-known that $e^{t\Delta_\Omega}$
is a $C_0$ contraction, positivity preserving semigroup on $C_0(\Omega)$, where
$$C_0(\Omega)=\left\{f\in C\left(\overline{\Omega}\right),\;  f(x)=0,\; \forall\; x\in \partial \Omega,\;\mbox{ and } \lim_{|x|\to \infty}f(x)=0\right\}.$$
Thus, instead of \eqref{NLHint}, we write
\begin{equation}
\label{intequOmega}
 u(t) =  {\rm e}^{t\Delta_\Omega}v_0 + a\int_0^t {\rm e}^{(t-\sigma)\Delta_{\Omega}}
\big(|u(\sigma)|^\alpha u(\sigma)\big) d\sigma,
\end{equation}
where $a=\pm 1$ and $\alpha>0$.  The initial value $v_0$ is in $\Dd$, the space of distributions on $\Omega$, and belongs to a specific class of distributions defined below.  We study solutions $u \in C((0,T];C_0(\Omega))$ such that
$u(t)\to v_0$  in  $\Dd$ as $t\searrow 0$.

In order to define the class of initial values $v_0 = \psi$ which we consider in \eqref{intequOmega}, we recall the following notation and definitions from \cite{MTW}  and \cite{TW-I}.
For $0<\gamma<N$ and integer $0 \le m \le N$, we let $\psi_0\, :\, \Omega \rightarrow\, \R$ be given by
\begin{equation} \label{psi0}%
\psi_0(x) = c_{m, \gamma} x_1 \cdots x_m  \vert x \vert^{-\gamma-2m} ,\, \forall\; x\in \Omega,
\end{equation}
where
\begin{equation} \label{cstcmg}%
c_{m, \gamma}=\gamma (\gamma+2) \cdots (\gamma+2m-2).
\end{equation}
In the case $m = 0$, then $c_{m, \gamma} = 1$ and $\psi_0(x) = |x|^{-\gamma}$, for $x \neq 0$.  We are mainly interested in the case $m \ge 1$, but since some of our results are new
for $m = 0$, i.e. $\Omega = \R^N$, we need to include $m = 0$ in the basic definitions.
In what follows, some of the statements need to modified in a trivial way if $m = 0$.

Clearly  $\psi_0$ is homogenous of degree $-\gamma-m,\; \psi_0\in C^\infty(\Omega)$ but $\psi_0\not\in C_0(\Omega)$, being singular at the origin. Also,
$\psi_0(x)>0,\; \forall\; x\in \Omega.$ We define the Banach space
\begin{equation}
\label{spc}
{\mathcal X}= \left\{ \psi \in L^1_{\rm loc}(\Omega) ;\quad
\frac{\psi}{\psi_0} \in L^\infty(\Omega)\right\},
\end{equation}
 with the norm
\begin{equation}
\label{Xnorm}
  \|\psi\|_{{\mathcal X}}=\left\|{\psi\over \psi_0}  \right\|_{ L^\infty(\Omega)}
\end{equation}
for all $\psi \in {\mathcal X}$, so that
\begin{equation}
\label{Xnorm2}
|\psi| \le   \|\psi\|_{{\mathcal X}}\psi_0.
\end{equation}
The set ${\mathcal X}$ will be the space of initial data for the integral equation \eqref{intequOmega} on $\Omega$.
The norm used in \cite{MTW} for $\X$ differs from \eqref{Xnorm} by a factor. This has no effect on the results we will use from \cite{MTW}.
Note that with the choice of norm \eqref{Xnorm}, we have $\|\psi_0\|_\X=1.$

 By \cite[Theorem 1.1, p. 343]{MTW} the heat semigroup
$e^{t\Delta_{\Omega}}$ is well defined on $\X.$  In particular, there exists a constant $\mathcal{C}\ge 1$ such that
\begin{equation}
\label{3etoile}
\|e^{t\Delta_{\Omega}}\psi\|_\X\leq \mathcal{C}\|\psi\|_\X,\; \forall\; t>0,\; \forall\; \psi\in \X.
\end{equation}
Also $e^{t\Delta_{\Omega}}$  is
a bounded map from $\X$ to $C_0(\Omega)$ for all $t>0$. Consider the Banach space
\begin{equation}
\label{dualsp}
{\mathcal Y}:=\left\{ \psi \in L^1_{\mbox{loc}}(\Omega) ;\quad
\psi \psi_0\in L^1(\Omega)
\right\}.
\end{equation}
Clearly, ${\mathcal X}$ is the dual of ${\mathcal Y}$, that is ${\mathcal Y}'={\mathcal X}$. Let
\begin{equation} \label{ball}%
{\mathcal B}_{K}:=\biggr\{\psi \in \X;\; \; \|\psi\|_{{\mathcal X}} \leq  K \biggl\},
\end{equation}
where $K>0$.  Endowed with the
weak$^\star$ topology, ${\mathcal B}_{K}$, is compact (see \cite{brezis}) and metrizable since ${\mathcal Y}$ is separable.

\begin{defini}
\label{BKstar}
We denote by ${\mathcal B}_{K}^*$ the ball ${\mathcal B}_{K}$ endowed with
the weak$^\star$ topology on $\X.$
\end{defini}

We know that ${\mathcal B}_{K}^*$ is a compact metric
space, hence complete and separable, for all $K>0$. By \cite[Proposition 3.1, p. 356]{MTW}, we also know that if $(\psi_n)_{n = 1}^\infty$ is a sequence in
${\mathcal B}_{K}$ and $\psi\in {\mathcal B}_{K}$,
then $\psi_n\to \psi$ in ${\mathcal B}_{K}^*$  if and only if $\psi_n\to \psi$ in $\Dd.$

\begin{rem}{\rm The function $\psi_0$ is simply the distribution $u_0$ given by \eqref{ex2},
with $K = 1$, realized as a point function on $\Omega$.  The relationship
between $e^{t\Delta_\Omega}\psi_0$ and $e^{t\Delta}u_0$ was studied in
\cite{MTW} and will be crucial in the next section to interpret the results of
this section as results about \eqref{NLHint} on $\R^N$.}
\end{rem}

We now introduce the space in which we will construct the solutions of \eqref{intequOmega}.
We set
\begin{equation}
\label{Psi}
\Psi(t,x)=e^{t\Delta_\Omega} \psi_0(x),\; t>0,\; x\in \Omega,
\end{equation}
and we sometimes write $\Psi(t)=e^{t\Delta_\Omega} \psi_0.$
By \cite[Theorem 1.1, p. 343]{MTW}, it follows that
$\Psi: (0,\infty) \to C_0(\Omega)$ is a continuous map.
Since $ \psi_0(x) > 0$ for $x \in \Omega$, we also have
\begin{equation*}
\Psi(t,x) > 0, \, \forall t > 0, \, \forall x \in \Omega.
\end{equation*}
Moreover, it follows from  \eqref{Xnorm2}, \eqref{3etoile} and \eqref{Xnorm} that
\begin{equation}
\label{Psibd}
\Psi(t) =e^{t\Delta_\Omega} \psi_0
\le \|e^{t\Delta_\Omega} \psi_0\|_{{\mathcal X}}\psi_0
\le \mathcal{C}\|\psi_0\|_{{\mathcal X}}\psi_0
= \mathcal{C}\psi_0, \, \forall t > 0.
\end{equation}
For a given $T> 0$, let $X$
be  defined by
$$X = X_T = \biggl\{u\in C\left((0,T];C_0(\Omega)\right);\; \exists M > 0,\;  |u(t,x)| \leq M\Psi(t,x),\; \forall\; t\in (0,T],\; \forall\; x\in \Omega \biggr\}.$$
The space $X = X_T$ equipped with the norm
$$
|||u||| = |||u|||_{X_T} =  \sup_{0 < t \le T,\; x \in \Omega}{|u(t,x)|\over
\Psi(t,x)},
$$
is a Banach space.  Clearly,
\begin{equation}
\label{norm}|u(t)| \le |||u|||e^{t\Delta_\Omega} \psi \Psi(t),\; \mbox{ for all } \; 0 < t \le T.
\end{equation}
If needed for clarity, we may denote $X$ by $X_T$ or $ X_{T,\Psi}$ and the
norm $|||u|||$ by $|||u|||_{X_T}$ or even  $|||u|||_{X_{T,\Psi}}$.
In addition, we observe that if $\psi \in \X$, then
\begin{eqnarray}
\label{semigpbdd}
\nonumber |||e^{t\Delta_\Omega} \psi|||_{X_T}
&=& \sup_{0 < t \le T,\; x \in \Omega}{|e^{t\Delta_\Omega} \psi|\over\Psi(t,x)}
= \sup_{0 < t \le T,\; x \in \Omega}{|e^{t\Delta_\Omega}({\psi\over\psi_0})\psi_0|\over\Psi(t,x)}\\
&\le&\sup_{0 < t \le T,\; x \in \Omega}{|e^{t\Delta_\Omega}\psi_0|\over\Psi(t,x)}
\Big\|{\psi\over\psi_0}\Big\|_{L^\infty(\Omega)} = |||\psi|||_\X.
\end{eqnarray}

It also follows from \cite[Theorem 1.1, p. 343]{MTW} that $\|\Psi(t)\|_{L^\infty(\Omega)} \le Ct^{-(\gamma + m)/2}$
for all $t > 0$. In fact, by \cite[Relation (1.15), p. 344]{MTW} and since
$\psi_0$ is homogeneous of degree $-\gamma-m$, we have that
\begin{equation}
\label{etoile}
D_{\sqrt t}(e^{t\Delta_\Omega} \psi_0) = t^{-(\gamma + m)/2}e^{\Delta_\Omega} \psi_0,\; \forall \; t>0,
\end{equation}
where the operator $D_{\sqrt t}$ is given by \eqref{dilation}, so that
\begin{equation}
\label{triangle}
\|\Psi(t)\|_ {L^\infty(\Omega)} =\|e^{t\Delta_\Omega} \psi_0\|_ {L^\infty(\Omega)} =
\|D_{\sqrt t}(e^{t\Delta_\Omega} \psi_0)\|_ {L^\infty(\Omega)} = \|e^{\Delta} \psi_0\|_ {L^\infty(\Omega)} t^{-(\gamma + m)/2},\; \forall \; t>0.
 \end{equation}
It follows, and this is fundamental in our existence proofs, that
for any $0<\alpha<2/(\gamma+m)$,
\begin{equation}
\label{2.3deTW-I}\int_0^A \left\|\Psi(t)\right\|_{L^\infty(\Omega)} ^\alpha dt<\infty,\; \forall\; A>0.
\end{equation}

We can now state and prove our existence result for \eqref{intequOmega} with more general singular initial data.

\begin{To}[well-posedness in $\X$]
\label{th4} Let the  integer $m$ and the real numbers
$\alpha,\; \gamma$ be such that $$ 0\leq m \leq N,\; 0<\gamma<N, \; 0<\alpha<{2\over \gamma+m}.$$  Let $K > 0$,  $M>0$ and $T>0$ be such
that
\begin{equation}
\label{e21n} K+2(\alpha+1)M^{\alpha+1}\int_0^T \|\Psi(\sigma)\|_{L^\infty(\Omega)}^\alpha d\sigma\leq
M
\end{equation}
and
\begin{equation}
\label{e22n} 2(\alpha+1)M^{\alpha}\int_0^T \|\Psi(\sigma)\|_{L^\infty(\Omega)}^\alpha
d\sigma<1,
\end{equation}
where $\Psi$ is given by \eqref{Psi}. Let $a = \pm 1$. For every  $\psi \in \X$
with $||\psi||_{\X} \le K$  there exists a unique solution $u\in X_T$ of
\begin{equation}
\label{intequ} u(t) =  {\rm e}^{t\Delta_\Omega}\psi +a \int_0^t {\rm e}^{(t-\sigma)\Delta_{\Omega}}
\big(|u(\sigma)|^\alpha u(\sigma)\big) d\sigma
\end{equation}
such that $$|||u|||_{X_T}\leq M.$$ The integral in (\ref{intequ}) is absolutely convergent
in $C_0(\Omega).$

Furthermore, we have the following additional properties.
\begin{itemize}
\item[(i)] $u(t)\to \psi$ as $t\to 0$ in $L^1_{\rm loc}(\Omega)$, hence in $\Dd$ and in ${\mathcal B}_{M'}^*$, where $M'= \mathcal{C}M$, and ${\mathcal C}$ is the constant appearing
in \eqref{3etoile}.
\item[(ii)] Let  $\psi_1$ and $\psi_2$
be in ${\mathcal B}_{K}.$  Let $u_1 \in X_T$, with $|||u_1|||_{X_T}\leq M$, and
$u_2 \in X_T$, with $|||u_2|||_{X_T}\leq M$, be the solutions of (\ref{intequ}) with respectively initial values $\psi_1$ and $\psi_2.$ It follows that
\begin{equation}
\label{CD}
|||u_1-u_2|||_{X_T}\leq C |||{\rm e}^{t\Delta_\Omega}(\psi_1-\psi_2)|||_{X_T}\leq C  \|\psi_1-\psi_2\|_{\X},
\end{equation} where $C>0$ is the constant given  by \eqref{mathcalC} below.
\item[(iii)] Let $(\psi_k)_{k = 1}^\infty \subset {\mathcal B}_{K}^*$ and $\psi\in {\mathcal B}_{K}^*.$ Let $u_k \in X_T$, with $|||u_k|||_{X_T}\leq M$, be the corresponding solution of (\ref{intequ}) with initial data $\psi_k$ and let
$u \in X_T$, with $|||u|||_{X_T}\leq M$ be the corresponding solution of (\ref{intequ}) with initial data $\psi$. If $\psi_k\to \psi$ as $k\to \infty$ in  $\Dd$ (that is also in ${\mathcal B}_{K}^*$)
 then $u_k(t)\to u(t)$ as $k\to \infty$ in $C_0(\Omega)$ for all $t\in (0,T],$ and uniformly in $[t_0,T],\; \forall \; t_0\in (0,T).$
 \item[(iv)] Let $\psi \in {\mathcal B}_{K}^*,$ let $u$  be the solution of \eqref{intequ} with initial value $\psi$
 and $v$ be the solution of \eqref{intequ} with initial value $|\psi|.$ Then
 $$|u(t)|\leq v(t),\; \forall\; t\in (0,T].$$
 \item[(v)] If $\psi\geq 0$ then $u(t)\geq 0$ on $\Omega$ for all $t\in (0,T].$
 \item[(vi)] Let  $\psi_1$ and $\psi_2$
be in ${\mathcal B}_{K}.$  Let $u_1$ and $u_2$ be the solutions of \eqref{intequ} with respectively initial values $\psi_1$ and $\psi_2.$ If $\psi_1\leq \psi_2$ then $u_1(t)\leq u_2(t),\; \forall\; t\in (0,T].$
\end{itemize}
\end{To}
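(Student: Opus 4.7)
The plan is to apply Banach's fixed-point theorem to the operator
\[\Phi(u)(t) = e^{t\Delta_\Omega}\psi + a\int_0^t e^{(t-\sigma)\Delta_\Omega}\bigl(|u(\sigma)|^\alpha u(\sigma)\bigr)\, d\sigma\]
on the closed ball $\{u\in X_T : |||u|||_{X_T}\le M\}$. The entire calculation rests on one algebraic identity: since $\Psi(\sigma) = e^{\sigma\Delta_\Omega}\psi_0$, the semigroup law gives $e^{(t-\sigma)\Delta_\Omega}\Psi(\sigma) = \Psi(t)$. Combined with the bound $|u(\sigma,x)|\le M\Psi(\sigma,x)$ that defines the ball, the positivity of $e^{(t-\sigma)\Delta_\Omega}$, and the $L^\infty$-estimate $\|\Psi(\sigma)\|_{L^\infty(\Omega)}\le C\sigma^{-(\gamma+m)/2}$ from \eqref{triangle}, every $X_T$-norm estimate on the Duhamel integral collapses to a factor of $\Psi(t)$ times $\int_0^t\|\Psi(\sigma)\|_{L^\infty(\Omega)}^\alpha d\sigma$, which is finite by \eqref{2.3deTW-I}.

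For the self-map property I pointwise bound $|u(\sigma,x)|^{\alpha+1}\le M^{\alpha+1}\|\Psi(\sigma)\|_{L^\infty(\Omega)}^\alpha \Psi(\sigma,x)$; applying $e^{(t-\sigma)\Delta_\Omega}$ and using the identity above together with \eqref{semigpbdd} yields $|\Phi(u)(t)| \le K\Psi(t) + M^{\alpha+1}\Psi(t)\int_0^T\|\Psi(\sigma)\|_{L^\infty(\Omega)}^\alpha d\sigma \le M\Psi(t)$, which holds under \eqref{e21n}. For contraction, the Lipschitz inequality $\bigl||u|^\alpha u - |v|^\alpha v\bigr|\le (\alpha+1)(|u|^\alpha+|v|^\alpha)|u-v|$ combined with the ball bound produces a Lipschitz constant $2(\alpha+1)M^\alpha\int_0^T\|\Psi(\sigma)\|_{L^\infty(\Omega)}^\alpha d\sigma < 1$ by \eqref{e22n}. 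Banach's theorem then furnishes the unique fixed point, and absolute convergence of the integral in $C_0(\Omega)$ follows by the same estimate in the sup-norm. Property (i) reduces to $e^{t\Delta_\Omega}\psi\to\psi$ in $\Dd$ (a property of the $\X$-semigroup from \cite{MTW}) plus the observation that the Duhamel term is pointwise dominated by $M^{\alpha+1}\mathcal{C}\psi_0\int_0^t\|\Psi(\sigma)\|_{L^\infty(\Omega)}^\alpha d\sigma \to 0$, giving the stated $L^1_{\rm loc}(\Omega)$ and weak$^\star$ convergences. Property (ii) is immediate from applying the same contraction scheme to $u_1-u_2$, absorbing the nonlinear difference on the left and bounding the linear difference by \eqref{semigpbdd}, with the constant $\mathcal{C}$ from \eqref{mathcalC} absorbing both the Lipschitz absorption factor and the $\X$-to-$X_T$ bound \eqref{semigpbdd}.

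The main obstacle is the weak$^\star$ stability (iii), since $\|\psi_k-\psi\|_\X$ need not be small. The remedy is that for each fixed $t>0$ and $x\in\Omega$ the kernel $K_t(x,\cdot)$ lies in $\mathcal{Y}$, so weak$^\star$ convergence yields the pointwise limit $e^{t\Delta_\Omega}\psi_k(x)\to e^{t\Delta_\Omega}\psi(x)$; the uniform envelope $|e^{t\Delta_\Omega}\psi_k|\le\mathcal{C}K\psi_0$ from \eqref{3etoile}, combined with parabolic regularity and equicontinuity of the family $\{e^{t\Delta_\Omega}\psi_k\}$ for $t$ bounded away from $0$, upgrades this pointwise convergence to uniform $C_0(\Omega)$-convergence on each $[t_0,T]$, and the contraction estimate then propagates it to $u_k\to u$. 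Finally, the comparison statements (iv)–(vi) follow by constructing $u$ as the uniform limit of the Picard iterates $u^{(n+1)}=\Phi(u^{(n)})$ with $u^{(0)}=0$: the monotonicity of $r\mapsto|r|^\alpha r$ and the positivity of $e^{t\Delta_\Omega}$ propagate the required inequalities through the iteration. The absorption case $a=-1$ in (v) and (vi), where the Picard scheme does not itself preserve sign or order, is handled instead by the parabolic maximum principle applied to the classical solution on $\Omega\times(0,T]$ with homogeneous Dirichlet data, writing $|u_2|^\alpha u_2-|u_1|^\alpha u_1 = c(t,x)(u_2-u_1)$ with $c\ge 0$.
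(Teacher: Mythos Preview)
Your fixed-point argument, the absolute convergence, and parts (i)--(ii) are essentially the same as the paper's proof.

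For (iii) your route differs from the paper's. You want to first prove $e^{t\Delta_\Omega}\psi_k\to e^{t\Delta_\Omega}\psi$ in $C_0(\Omega)$ uniformly on $[t_0,T]$, and then say ``the contraction estimate propagates it to $u_k\to u$.'' But the contraction estimate (ii) is in the $X_T$-norm, i.e.\ a supremum over \emph{all} $t\in(0,T]$ of $|e^{t\Delta_\Omega}(\psi_k-\psi)|/\Psi(t)$, and weak$^\star$ convergence does not make this small (the ratio near $t=0$ need not vanish). One can rescue your idea by splitting the Duhamel integral at a small time $t_0'$, bounding $\int_0^{t_0'}$ via $|||u_k-u|||_{X_T}\le 2M$ (giving an error $O(\int_0^{t_0'}\|\Psi\|_\infty^\alpha)$ uniform in $k$), and absorbing $\int_{t_0'}^t$ by the $L^\infty$-contraction constant; then let $k\to\infty$ first and $t_0'\to 0$ second. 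That works, but it is not what you wrote. The paper instead argues compactness of the full sequence $(u_k)$ in $C([t_0,T];C_0(\Omega))$ via the uniform envelope $|u_k(t,x)|\le M\Psi(t,x)\le C(t+|x|^2)^{-(\gamma+m)/2}$ and parabolic regularity, extracts a subsequential limit $v$, shows $v$ satisfies \eqref{intequ} by dominated convergence and $e^{t\Delta_\Omega}\psi_k\to e^{t\Delta_\Omega}\psi$, and concludes $v=u$ by uniqueness.

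For (iv)--(vi) your Picard-iterate argument is a clean alternative when $a=+1$. For $a=-1$, however, your appeal to the parabolic maximum principle has a real gap: the solutions are classical only for $t>0$, and the comparison $u_1\le u_2$ (or $u\ge 0$) on the parabolic boundary is not available at $t=0$ because the data are singular. Running the minimum-principle argument on $[\varepsilon,T]\times\Omega$ requires $u_1(\varepsilon)\le u_2(\varepsilon)$, which is exactly what you are trying to prove; and on $(0,T]\times\Omega$ a negative minimum can in principle escape to $t=0$ where only $|u(t)|\le M\Psi(t)$ (unbounded as $t\to 0$) is known. The paper avoids this by approximating $\psi$ (respectively $\psi_1,\psi_2$) by smooth $\psi_n\in\mathcal{B}_K\cap\mathcal{D}(\Omega)$ converging in $L^1_{\rm loc}$, invoking the standard $C_0(\Omega)$ comparison for those regular data, and passing to the limit via (iii). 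This single device handles both signs of $a$ at once.
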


\begin{proof}
The proof uses and adapts some arguments of \cite{TW-I} in conjunction with some results established in \cite{MTW,CDW-DCDS}.  For simplicity, we consider only the case $m 
\ge 1$.  For $m = 0$, the results cited below in \cite{MTW} need to be replaced by
corresponding results in \cite{CDW-DCDS}.
Let $K>0,$  $M>0$ and $T>0$ satisfying (\ref{e21n}) and
(\ref{e22n}). Let
\begin{equation*}
B(0,M):=\{u\in X_T\;  :\;|||u|||_{X_T}\leq M\},
\end{equation*}
which is a complete metric space, with the metric
\begin{equation*}
|||u - v|||_{X_T}.
\end{equation*}
  Fix $\psi\in \X$ such that $\|\psi\|_\X\leq K.$  Define, for
$u\in B(0,M), $
$${\mathcal F}_{\psi}u(t)={\rm e}^{t\Delta_\Omega}\psi+a\int_0^t{\rm e}^{(t-\sigma)\Delta_{\Omega}}\big(|u(\sigma)|^\alpha u(\sigma)\big)d\sigma.$$
We will show that ${\mathcal F}_{\psi}$ is a contraction on $B(0,M).$  (For simplicity, we denote $\|\cdot \|_{L^\infty(\Omega)}$ by $\|\cdot\|_\infty.$) Let  $\psi_1,\; \psi_2\in {\mathcal B}_K$ and let $u_1,\; u_2\in B(0,M)$. Then
$$ {\mathcal
F}_{\psi_1}(u_1)(t)-{\mathcal F}_{\psi_2}(u_2)(t)= {\rm e}^{t\Delta_\Omega}(\psi_1-\psi_2)+a\int_0^t{\rm
e}^{(t-\sigma)\Delta_\Omega}\Big(|u_1|^\alpha u_1(\sigma)-|u_2|^\alpha
u_2(\sigma)\Big)d\sigma.$$ Using the fact that ${\rm e}^{t\Delta_\Omega}$ is positivity preserving, and (\ref{norm}),
we get that
\begin{eqnarray}\nonumber |{\mathcal F}_{\psi_1}(u_1)(t)-{\mathcal F}_{\psi_2}(u_2)(t)|&\leq &|{\rm e}^{t\Delta_\Omega}(\psi_1-\psi_2)|+\\ && \nonumber
\hspace{-3cm}(\alpha+1)\int_0^t{\rm
e}^{(t-\sigma)\Delta_\Omega}\Big(|u_1(\sigma)-u_2(\sigma)|\big(|u_1(\sigma)|^\alpha
+|u_2(\sigma)|^\alpha\big) \Big)d\sigma\\ \nonumber
&& \hspace{-4cm} \leq  {\rm e}^{t\Delta_\Omega}|\psi_1-\psi_2|+(\alpha+1)\int_0^t\Big({\rm e}^{(t-\sigma)\Delta_\Omega}
\Psi(\sigma){|u_1(\sigma)-u_2(\sigma)|\over \Psi(\sigma)}\Big)\times\\ \nonumber
&& \hspace{3cm}\Big(\|u_1(\sigma)\|_\infty^\alpha
+\|u_2(\sigma)\|_\infty^\alpha \Big)d\sigma\\ \nonumber
 && \hspace{-4cm} \leq  {\rm e}^{t\Delta_\Omega}|\psi_1-\psi_2|+(\alpha+1)\int_0^t\Big({\rm
e}^{(t-\sigma)\Delta_\Omega}
\Psi(\sigma)\Big)|||u_1-u_2|||_{X_T}\times\\ \nonumber
&& \hspace{3cm}\Big(\|u_1(\sigma)\|_\infty^\alpha
+\|u_2(\sigma)\|_\infty^\alpha \Big)d\sigma\\ \label{inqcontraction}
&&\hspace{-4cm} \leq {\rm e}^{t\Delta_\Omega}|\psi_1-\psi_2|+\Psi(t)\left(
2M^\alpha(\alpha+1)\int_0^t\|\Psi(\sigma)\|_\infty^\alpha
d\sigma \right)|||u_1-u_2|||_{X_T}.
\end{eqnarray}
Let now $\psi_1=\psi\in {\mathcal B}_K,\; u_1=u\in B(0,M)$ and $\psi_2=0,\; u_2=0.$ Then, by the previous inequality and \eqref{Xnorm2}, we get

\begin{eqnarray*}
|{\mathcal F}_{\psi}u(t)|&\leq&
{\rm e}^{t\Delta_\Omega}|\psi|+\Psi(t)\left(
2M^\alpha(\alpha+1)\int_0^t\|\Psi(\sigma)\|_\infty^\alpha
d\sigma \right)|||u|||_{X_T}\\
&\leq&
{\rm e}^{t\Delta_\Omega}\|\psi\|_{\X}\psi_0+\Psi(t)\left(
2M^{\alpha+1}(\alpha+1)\int_0^t\|\Psi(\sigma)\|_\infty^\alpha
d\sigma \right)\\
&\leq& K\Psi(t)+\Psi(t)\left(
2M^{\alpha+1}(\alpha+1)\int_0^t\|\Psi(\sigma)\|_\infty^\alpha
d\sigma \right)\\
&\leq& \Psi(t)\left[K+
2M^{\alpha+1}(\alpha+1)\int_0^t\|\Psi(\sigma)\|_\infty^\alpha
d\sigma \right].
\end{eqnarray*}
That is,
$${|{\mathcal F}_{\psi}u(t)|\over \Psi(t)}\leq K+2M^{\alpha+1}(\alpha+1)\int_0^T\|\Psi(\sigma)\|_\infty^\alpha
d\sigma,\; \forall t\in (0,T].$$ Thus by (\ref{e21n}),
$|||{\mathcal F}_{\psi}u|||_{X_T}\leq M$, and hence ${\mathcal F}_{\psi}$ maps $B(0,M)$
into itself.

Let now $\psi_1=\psi_2=\psi\in {\mathcal B}_K$ in \eqref{inqcontraction}.
This gives
\begin{eqnarray*}
|||{\mathcal F}_{\psi}(u_1) -{\mathcal F}_{\psi}(u_2)|||_{X_T} =
\sup_{0 < t \le T}{|{\mathcal F}_{\psi}(u_1)(t)-{\mathcal F}_{\psi}(u_2)(t)|\over \Psi(t)}\\
\leq
\Big(2(\alpha+1)M^\alpha \int_0^T \|\Psi(\sigma)\|_\infty^\alpha
d\sigma\Big)|||u_1-u_2|||_{X_T}.
\end{eqnarray*}
 Then by (\ref{e22n}), ${\mathcal F}$ is a strict contraction from
$B(0,M)$ into itself. It follows, by the  Banach fixed point theorem that there
exists a unique solution $u \in X_T$ of the integral equation (\ref{intequ})
such that $|||u|||_{X_T}\leq M.$

Let $u$ be the solution of the integral equation constructed by the above fixed point argument.
Then
\begin{eqnarray}
\label{absconv}
\nonumber\int_0^t \|{\rm e}^{(t-\sigma)\Delta_\Omega} |u(\sigma)|^\alpha u(\sigma)\|_\infty d\sigma &\leq&
\int_0^t \left\|({\rm e}^{(t-\sigma)\Delta_\Omega} |u(\sigma)|) \|u(\sigma)\|^\alpha_\infty\right\|_\infty d\sigma\\
 &\leq& M^{\alpha+1}\|\Psi(t)\|_\infty\int_0^t
\|\Psi(\sigma)\|_\infty^\alpha d\sigma,
\end{eqnarray}
which shows that the integral in (\ref{intequ}) is absolutely convergent
in $C_0(\Omega)$.

We now prove the additional properties.

(i) By \eqref{norm} and \eqref{Psibd},
$$
|u(t)| \le |||u|||_{X_T} \Psi(t)\leq M\mathcal{C}\psi_0,\; \mbox{ for all } \; 0 < t \le T.
$$
Hence $u(t)\in \X$ for all $t\in (0,T]$ and $\|u(t)\|_\X\leq M\mathcal{C}$,
where $\mathcal{C}$ is given by \eqref{3etoile}.
On the one hand we have that
$$u(t)-{\rm e}^{t\Delta_\Omega}\psi=a \int_0^t {\rm e}^{(t-\sigma)\Delta_{\Omega}}
\big(|u(\sigma)|^\alpha u(\sigma)\big) d\sigma,$$
and, again using \eqref{norm} and \eqref{Psibd},
\begin{eqnarray*}
\left|\int_0^t {\rm e}^{(t-\sigma)\Delta_{\Omega}}
\big(|u(\sigma)|^\alpha u(\sigma)\big) d\sigma\right|&\leq & M^{\alpha+1}\int_0^t {\rm e}^{(t-\sigma)\Delta_{\Omega}}
\big(\Psi(\sigma)|\Psi(\sigma)|^\alpha \big) d\sigma \\ &\leq & M^{\alpha+1}\int_0^t \big({\rm e}^{(t-\sigma)\Delta_{\Omega}}\Psi(\sigma)\big)
\|\Psi(\sigma)\|^\alpha_\infty  d\sigma \\ &= & M^{\alpha+1} \Psi(t)\int_0^t \|\Psi(\sigma)\|^\alpha_\infty  d\sigma \\ &\leq & {\mathcal{C}}M^{\alpha+1} \psi_0 \int_0^t \|\Psi(\sigma)\|^\alpha_\infty
d\sigma\to 0 ,\; \mbox{as}\; t\to 0,
\end{eqnarray*}
in $L^1_{\rm loc}(\Omega)$, hence in $\Dd$.
On the other hand, by \cite[Proposition 2.5, p. 353]{MTW} and Remark \ref{error1} below, ${\rm e}^{t\Delta_\Omega}\psi\to
\psi$ as $t\to 0$ in $L^1_{\rm loc}(\Omega)$, hence in $\Dd$.
 We thus conclude that $u(t)\to \psi$ as $t\to 0$ in $L^1_{\rm loc}(\Omega)$, hence in $\Dd$. The convergence in ${\mathcal B}_{M'}^*$ follows by \cite[Proposition 3.1 (i), p. 356]{MTW}.

(ii)  By \eqref{inqcontraction}, \eqref{norm} and the fact that ${\mathcal
F}_{\psi_1}(u_1)=u_1$ and ${\mathcal
F}_{\psi_2}(u_2)=u_2$, we have that
\begin{eqnarray*}
|u_1(t)-u_2(t)|&\leq & |{\rm e}^{t\Delta_\Omega}(\psi_1-\psi_2)|+\Psi(t)\Big(2(\alpha+1)M^\alpha \int_0^T \left\|\Psi(\sigma)\right\|_\infty^\alpha
d\sigma\Big) |||u_1-u_2|||_{X_T}\\
&\leq &\Psi(t)|||{\rm e}^{t\Delta_\Omega}(\psi_1-\psi_2)|||_{X_T}+\Psi(t)\Big(2(\alpha+1)M^\alpha \int_0^T \left\|\Psi(\sigma)\right\|_\infty^\alpha
d\sigma\Big) |||u_1-u_2|||_{X_T}\\
&= & \Psi(t)\left[|||{\rm e}^{t\Delta_\Omega}(\psi_1-\psi_2)|||_{X_T}+\Big(2(\alpha+1)M^\alpha \int_0^T \left\|\Psi(\sigma)\right\|_\infty^\alpha
d\sigma\Big) |||u_1-u_2|||_{X_T}\right].\\
%&\leq & \|\psi_1-\psi_2\|_\X{\rm e}^{t\Delta_\Omega}\psi_0+\Psi(t)\Big(2(\alpha+1)M^\alpha \int_0^T \left\|\Psi(\sigma)\right\|_\infty^\alpha
%d\sigma\Big) |||u_1-u_2|||_{X_T}\\
%&= & \Psi(t)\left[\|\psi_1-\psi_2\|_\X+\Big(2(\alpha+1)M^\alpha \int_0^T \left\|\Psi(\sigma)\right\|_\infty^\alpha
%d\sigma\Big) |||u_1-u_2|||_{X_T}\right].
\end{eqnarray*}
Consequently,
$$|||u_1-u_2|||_{X_T}\leq |||{\rm e}^{t\Delta_\Omega}(\psi_1-\psi_2)|||_{X_T}+\Big(2(\alpha+1)M^\alpha \int_0^T \left\|\Psi(\sigma)\right\|_\infty^\alpha
d\sigma\Big) |||u_1-u_2|||_{X_T}.$$
Thus, by \eqref{e22n}, the first inequality in (\ref{CD}) follows  with
\begin{equation}
\label{mathcalC} C={1\over 1-\left(2(\alpha+1)M^\alpha \int_0^T\left\|\Psi(\sigma)\right\|_\infty^\alpha d\sigma\right)}.
\end{equation}
The second inequality in (\ref{CD}) then follows from \eqref{semigpbdd}.

(iii) Since $u_k \in X_T$, with $|||u_k|||_{X_T}\leq M$, it follows from
\cite[Lemma 2.6, p. 355]{MTW} that
\begin{equation}
\label{ukest}
|u_k(t,x)| \le M\Psi(t,x) \le C(t + |x|^2)^{-\frac{\gamma + m}{2}}, \quad t \in (0,T], \, x\in\Omega,
\end{equation}
for some $C > 0$ independent of $k$.  In particular, for any fixed $t \in (0,T]$,
the functions $u_k(t)$ are uniformly bounded in $C_0(\Omega)$.  It follows
by standard parabolic regularity arguments, along with \eqref{ukest}, that the sequence $(u_k)_{k = 1}^\infty$
is relatively compact in $C((t_0,T];C_0(\Omega))$, for any $0 < t_0 < T$.
By a diagonal argument, it follows that there exist $v \in C((0,T];C_0(\Omega))$,
and a subsequence $u_{k_n}$ such that $u_{k_n} \to v$
in $C((t_0,T];C_0(\Omega)$, for all $0 < t_0 < T$.  It follows that $v \in X_T$, with
$||v|||_{X_T}\leq M$.

We next observe that $v$ satisfies the integral equation \eqref{intequ}.
This follows easily since
\begin{equation*}
u_{k_n}(t) =  {\rm e}^{t\Delta_\Omega}\psi_{k_n} +a \int_0^t {\rm e}^{(t-\sigma)\Delta_{\Omega}}
\big(|u_{k_n}(\sigma)|^\alpha u_{k_n}(\sigma)\big) d\sigma,
\end{equation*}
for all $0 < t \le T$, and letting $n \to \infty$.
Indeed, ${\rm e}^{t\Delta_\Omega}\psi_{k_n} \to {\rm e}^{t\Delta_\Omega}\psi$
as $n \to \infty$, by \cite[Proposition 4.1(ii), p. 359]{MTW}.
Moreover, by the dominated convergence theorem, since
\begin{equation*}
||{\rm e}^{(t-\sigma)\Delta_\Omega} |u_{k_n}(\sigma)|^\alpha u_{k_n}(\sigma)\|_\infty \leq
 M^{\alpha+1}\|\Psi(t)\|_\infty\
\|\Psi(\sigma)\|_\infty^\alpha,
\end{equation*}
as in \eqref{absconv},
the integral terms converge to $a\int_0^t {\rm e}^{(t-\sigma)\Delta_{\Omega}}
\big(|v(\sigma)|^\alpha v(\sigma)\big) d\sigma$.

Since $v\in X_T$, $|||v|||_{X_T} \le M$, and $v$ satisfies the integral
equation \eqref{intequ}, it follows from uniqueness that $v = u$,
so $u_{k_n} \to u$
on $C((t_0,T];C_0(\Omega))$, for any $0 < t_0 < T$.
A standard subsequence argument shows that in fact $u_k \to u$
on $C((t_0,T];C_0(\Omega))$, for any $0 < t_0 < T$.

(iv) By \cite[Proposition 3.2, p. 357]{MTW}, we know that  ${\mathcal B}_{K}\cap {\mathcal{D}}(\Omega)$ is dense in ${\mathcal B}_{K}^*.$ Let
$(\psi_n)_{n = 1}^\infty\subset {\mathcal B}_{K}\cap \D$ such that $\psi_n\to \psi$ as $n\to \infty$ in ${\mathcal B}_{M'}^*,$ hence in $\Dd.$ In fact,
the proof of \cite[Proposition 3.2, p. 357]{MTW} shows that we may choose
$(\psi_n)_{n = 1}^\infty$ so that in addition $\psi_n\to \psi$ as $n\to \infty$ in $L^1_{\rm loc}(\Omega)$.  Thus,
$(|\psi_n|)\subset {\mathcal B}_{K}\cap C_0(\Omega) $ and $|\psi_n|\to |\psi|$ as $n\to \infty$ in $L^1_{\rm loc}(\Omega)$, hence in $\Dd$ and ${\mathcal B}_{K}^*.$
Let $u_n$ the solution of \eqref{intequOmega} with initial data $\psi_n\in C_0(\Omega)$ and let $v_n$ be the solution of \eqref{intequOmega} with initial
data $|\psi_n|\in C_0(\Omega).$ It is well known, by the $C_0(\Omega)$ local well-posedness  and comparison argument, that
$$|u_n(t)|\leq v_n(t),\; \forall\; t\in (0,T].$$ Letting $n\to \infty$ we get by Part (iii) that $u_n(t)\to u(t),\; v_n(t)\to v(t)$ in $C_0(\Omega)$ for $t\in (0,T]$ and hence
$$|u(t)|\leq v(t),\; \forall\; t\in (0,T].$$

(v) This is an immediate consequence of Part (iv).

(vi) The proof is analogous to the proof of Part (iv).
\end{proof}

\begin{rem}\label{error1}{\rm
There is a small error in the proof of
\cite[Proposition 2.5, p. 353]{MTW}.  It claims to show that if
$\psi \in \mathcal X$ then ${\rm e}^{t\Delta_\Omega}\psi\to
\psi$ a.e. as $t\to 0$.  The proof in fact shows that
${\rm e}^{t\Delta_\Omega}\psi\to
\psi$ in $L^1_{\rm loc}(\Omega)$ as $t\to 0$, which is indeed sufficient
to guarantee convergence in $\Dd$. The authors of \cite{MTW} regret this error.
}
\end{rem}

\begin{rem}{\rm By well-posedness of the integral equation in  $C_0(\Omega),$ the solution $u$ of  \eqref{intequOmega} with initial data $\psi\in \mathcal{B}_K,$ constructed in
Theorem \ref{th4} above, can be continued to a maximal solution $u \in C\left((0,T_{\max}(\psi)); C_0(\Omega)\right).$
If $T_{\max}(\psi)<\infty$ then $\lim_{t\to T_{\max}(\psi)}\|u(t)\|_\infty=\infty$.
}
\end{rem}

We have also obtained the following result.
\begin{To}
\label{globalproperties}
Assume  the hypotheses of Theorem \ref{th4}. Let $\psi\in  \mathcal{B}_K$ and let
$u \in C\left((0,T_{\max}(\psi)); C_0(\Omega)\right)$ be the solution of  \eqref{intequOmega} with initial data $\psi$ constructed by Theorem \ref{th4},
extended to its maximal existence time. Then we have the following.
\begin{itemize}
 \item[(i)] If $a=-1$ then $T_{\max}(\psi)=\infty.$
 \item[(ii)] If $a = 1$, then there exists $c > 0$ such that
 \begin{equation}
 \label{generallifespan}
\lambda^{[({1\over \alpha}-{\gamma+m\over 2})^{-1}]}T_{\max}(\lambda \psi) > c
 \end{equation}
 for all $\lambda > 0$.
 \item[(iii)] If $a = 1$, then $T_{\max}(\psi_0) < \infty$.
 Moreover,
 \begin{equation}
 \label{generallifespan2}
\lambda^{[({1\over \alpha}-{\gamma+m\over 2})^{-1}]}T_{\max}(\lambda \psi_0) = T_{\max}(\psi_0)
 \end{equation}
 for all $\lambda > 0$.
 \end{itemize}
\end{To}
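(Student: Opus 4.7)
The plan is to treat the three parts of Theorem~\ref{globalproperties} in order, using Theorem~\ref{th4} as the local theory. Parts (i) and (ii) will be quick consequences of comparison arguments and of the explicit conditions \eqref{e21n}--\eqref{e22n}, and the scaling half of Part (iii) is essentially algebraic. The main obstacle, by a wide margin, will be the finiteness $T_{\max}(\psi_0)<\infty$ in Part (iii), since the scaling symmetry alone is equally consistent with $T_{\max}(\psi_0)=\infty$ and a separate blow-up mechanism must be supplied.

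For Part (i), by Theorem~\ref{th4}(iv) one may replace $u$ by the solution $v$ with data $|\psi|$, which is nonnegative by Theorem~\ref{th4}(v). Since $a=-1$ and $e^{t\Delta_\Omega}$ is positivity preserving, the integral equation \eqref{intequOmega} immediately yields
\[
v(t)=e^{t\Delta_\Omega}|\psi|-\int_0^t e^{(t-\sigma)\Delta_\Omega}\bigl(v(\sigma)^{\alpha+1}\bigr)\,d\sigma\leq e^{t\Delta_\Omega}|\psi|.
\]
Combining this with \eqref{Xnorm2}, \eqref{3etoile} and \eqref{triangle} gives $\|u(t)\|_\infty\leq C\|\psi\|_{\mathcal X}t^{-(\gamma+m)/2}$, which stays bounded on every $[t_0,T_{\max}(\psi))$ with $t_0>0$. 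The standard blow-up alternative $\lim_{t\uparrow T_{\max}}\|u(t)\|_\infty=\infty$ then forces $T_{\max}(\psi)=\infty$.

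For Part (ii), I apply Theorem~\ref{th4} with initial value $\lambda\psi$ and $K=\lambda\|\psi\|_{\mathcal X}$. By \eqref{triangle}, $\int_0^T\|\Psi(\sigma)\|_\infty^\alpha d\sigma=C_1T^\beta$ with $\beta=1-\alpha(\gamma+m)/2>0$. Choosing $M=2K$, the inequality \eqref{e21n} reduces to $T^\beta\leq c_0 K^{-\alpha}$, i.e.\ $T\leq c_1\lambda^{-1/\rho}\|\psi\|_{\mathcal X}^{-1/\rho}$ where $\rho=\frac{1}{\alpha}-\frac{\gamma+m}{2}$; one checks that \eqref{e22n} is automatic for this $M$. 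Thus $T_{\max}(\lambda\psi)\geq c_2\lambda^{-1/\rho}$, which is \eqref{generallifespan}.

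For Part (iii), the scaling identity \eqref{generallifespan2} follows from uniqueness in $X_T$: if $u$ is the maximal solution with data $\psi_0$, then $u_\mu(s,y):=\mu^{2/\alpha}u(\mu^2 s,\mu y)$ solves \eqref{intequOmega} with data $\mu^{2/\alpha-(\gamma+m)}\psi_0=\mu^{2\rho}\psi_0$ by homogeneity of $\psi_0$, so $T_{\max}(\mu^{2\rho}\psi_0)=\mu^{-2}T_{\max}(\psi_0)$, and the substitution $\lambda=\mu^{2\rho}$ yields the identity. To prove $T_{\max}(\psi_0)<\infty$ I will run a Kaplan-type eigenfunction argument on a ball $B_R(x_0)$ chosen with $\overline{B_R(x_0)}\subset\Omega$, $|x_0|\sim R$ and $(x_0)_i\sim R$ for $1\leq i\leq m$ (for instance $x_0=2R(e_1+\cdots+e_m)$), so that $\psi_0(x)\geq cR^{-(\gamma+m)}$ throughout $B_R(x_0)$. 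Let $\phi\geq 0$ be the first Dirichlet eigenfunction of $-\Delta$ on this ball, extended by zero to $\Omega$, normalized by $\int\phi=1$ with eigenvalue $\mu=\mu_1(B_1)R^{-2}$, and set $F(t):=\int u(t,x)\phi(x)\,dx$. The $\mathcal X$-bound $u(t)\leq \mathcal{C}\|\psi_0\|_{\mathcal X}\psi_0$ from Theorem~\ref{th4}(i) combined with $u(t)\to\psi_0$ in $L^1_{\rm loc}(\Omega)$ and dominated convergence give $F(t)\to F(0)=\int\psi_0\phi$ as $t\to 0^+$. Integration by parts (with favorable sign on $\partial B_R(x_0)$ because $u\geq 0$ and $\partial_\nu\phi\leq 0$) and Jensen's inequality then deliver $F'(t)\geq -\mu F(t)+F(t)^{\alpha+1}$. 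Since $F(0)\geq cR^{-(\gamma+m)}$ while $\mu^{1/\alpha}\leq CR^{-2/\alpha}$, the threshold condition $F(0)^\alpha\geq 2\mu$ reduces to $R^{2/\alpha-(\gamma+m)}\geq C'$, which holds for sufficiently large $R$ because $2/\alpha>\gamma+m$. For such $R$, $F'(t)\geq\tfrac{1}{2}F(t)^{\alpha+1}$, forcing blow-up of $F$, and hence of $u$, in finite time. The trickiest point here is verifying that $F(0)$ is well defined and that $F(t)\to F(0)$ in spite of the origin singularity of $\psi_0$, which is why the ball must be placed strictly inside $\Omega$, away from both $\partial\Omega$ and the origin.
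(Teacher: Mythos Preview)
Your arguments for Parts (i), (ii), and the scaling identity \eqref{generallifespan2} in Part (iii) are correct and essentially match the paper's. For (i) the paper argues slightly more directly: rather than passing through the auxiliary solution $v$ with data $|\psi|$, it invokes Kato's parabolic inequality to get $|u(t+t_0)|\le e^{t\Delta_\Omega}|u(t_0)|$ for $C_0$ data and then lets $t_0\to 0$ using \cite[Proposition~4.1]{MTW}. Note that your appeal to Theorem~\ref{th4}(iv) literally only gives $|u|\le v$ on the fixed-point interval $(0,T]$; propagating this to all of $(0,T_{\max})$ requires exactly the same Kato-type comparison for $C_0(\Omega)$ data that the paper uses, so you are not really bypassing it. For (ii) both proofs make the same choice $M=2K$ in \eqref{e21n} and read off the lower bound on $T$.

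The genuine divergence is in the proof that $T_{\max}(\psi_0)<\infty$. The paper does \emph{not} run a Kaplan eigenfunction argument. Instead it invokes Weissler's necessary condition \cite[Theorem~1]{W4}: any nonnegative solution existing on $(0,T)$ forces $\|e^{t\Delta_\Omega}\psi_0\|_\infty\le(\alpha t)^{-1/\alpha}$ for $t\in(0,T)$. Combined with the exact identity \eqref{triangle}, namely $\|e^{t\Delta_\Omega}\psi_0\|_\infty=\|e^{\Delta_\Omega}\psi_0\|_\infty\,t^{-(\gamma+m)/2}$, one gets $t^{(\gamma+m)/2-1/\alpha}\ge c>0$, which fails for large $t$ since $\alpha<2/(\gamma+m)$; hence $T_{\max}(\psi_0)<\infty$ in one line. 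Your Kaplan argument on a large ball $B_R(x_0)\subset\Omega$ is correct and entirely self-contained (no external blow-up criterion is needed), and your handling of the limit $F(t)\to F(0)$ via $L^1_{\rm loc}$ convergence on a ball bounded away from the origin is the right way to deal with the singularity of $\psi_0$. The paper's route, once \cite{W4} is granted, is shorter and also foreshadows the mechanism behind Theorem~\ref{blowup}, where the same inequality \eqref{ineqinitia2} is confronted with the semigroup asymptotics; your route trades that structural link for elementarity.
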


\begin{proof}[Proof of Theorem \ref{globalproperties}]
(i) If $a = -1$, we claim that
\begin{equation}
\label{Kato}
|u(t)| \le {\rm e}^{t\Delta_\Omega}|\psi| \mbox{ for all } 0 < t < T_{\max}(\psi).
\end{equation}
To prove this, let $0 < t_0 < T$, where $T$ is as in the fixed point argument.
Since $u(t_0)\in C_0(\Omega)$, we know (by Kato's parabolic inequality)
that $|u(t + t_0)| \le {\rm e}^{t\Delta_\Omega}|u(t_0)|$ for all $0 < t < T_{\max}(\psi) - t_0.$  We next let $t_0 \to 0$, so that $u(t_0) \to \psi$, and likewise
$|u(t_0)| \to |\psi|$, in
$L^1_{\rm loc}(\Omega)$ by Theorem~\ref{th4}(i), and therefore in ${\mathcal B}_{K}^*$.
Finally, by \cite[Proposition 4.1, p. 359]{MTW}, it follows that
${\rm e}^{t\Delta_\Omega}|u(t_0)| \to {\rm e}^{t\Delta_\Omega}|\psi|$
in $C_0(\Omega)$ as $t \to 0$, which establishes \eqref{Kato}.
Global existence now follows since  ${\rm e}^{t\Delta_\Omega}|\psi| \in C_0(\Omega)$ for all $t > 0$.

(ii) It is clear that $T_{\max}(\psi)$
is bigger than the $T$ obtained from the fixed point argument.
Hence, by condition \eqref{e21n}, which implies \eqref{e22n},
we must have that
$$
K+2(\alpha+1)M^{\alpha+1}\left(1-{\gamma+m\over 2}\alpha\right)^{-1}\|e^{\Delta_\Omega} \psi_0\|_\infty^\alpha T_{\max}(\psi)^{1-{\gamma+m\over 2}\alpha}> M.
$$
for all $M > K$, where we have used \eqref{triangle} to simplify
$\int_0^T \|\Psi(\sigma)\|_{L^\infty(\Omega)}^\alpha
d\sigma$.  Likewise, it must be that
$$
\lambda K+2(\alpha+1)M^{\alpha+1}\left(1-{\gamma+m\over 2}\alpha\right)^{-1}\|e^{\Delta_\Omega} \psi_0\|_\infty^\alpha T_{\max}(\lambda \psi)^{1-{\gamma+m\over 2}\alpha}> M.
$$
for all $M > \lambda K$, for all $\lambda > 0$.
If we set $M = 2\lambda K$, we obtain the desired relationship.

(iii) We first show that $T_{\max}(\psi_0) < \infty$.
Let $u$ be the solution of  \eqref{NLHint} with initial data $\psi_0$,
extended to its maximal solution in $C_0(\Omega)$. Since $\psi_0\geq 0$,
 $u$  is positive on $\Omega$. By \cite[Theorem 1, p. 546]{W4}, we have that
\begin{equation}
\label{ineqinitia2}\|{\rm e}^{t\Delta_\Omega}\psi_0\|_\infty\leq \left(\alpha t\right)^{-1/\alpha} ,\; t\in (0,T_{\max}(\psi_0)).
\end{equation}
Since $\alpha < 2/(\gamma + m)$, formula \eqref{triangle} implies that $T_{\max}(\psi_0) < \infty$.

The formula for $T_{\max}(\lambda \psi_0)$ follows by a
rescaling argument, using the fact that $\psi_0$ is homogeneous of degree
$-(\gamma + m)$.  More precisely, we know that
\begin{equation*}
\lambda \psi_0 (x)= \mu^{2/\alpha}\psi_0 (\mu x),
\end{equation*}
where $\lambda = \mu^{\frac{2}{\alpha} - (\gamma + m)}$.
Furthermore, if $u(t,x)$ is the maximal solution of \eqref{intequOmega}
with initial value $\psi_0$, then $\mu^{2/\alpha}u(\mu^2 t, \mu x)$
is the maximal solution with initial value $\mu^{2/\alpha}\psi_0 (\mu \cdot)$,
so that
\begin{equation}
T_{\max}(\lambda \psi_0) = T_{\max}(\mu^{2/\alpha}\psi_0 (\mu \cdot)) = \mu^{-2} T_{\max}(\psi_0) = \lambda^{-[({1\over \alpha}-{\gamma+m\over 2})^{-1}]}T_{\max}( \psi_0).
\end{equation}
\end{proof}

\begin{rem}
{\rm It is well-known that a fixed point argument used to prove
local existence to a nonlinear integral equation can yield lower
estimates for the blowup rate.  The idea is to observe that the fixed point argument can not work on the interval
$[0, T_{\max} - t_0]$ with initial value $u(t_0)$.  To our knowledge,
this idea was first used in \cite[Section 4 and Remark 6(2)]{W2}
and then formalized more clearly in \cite[Proposition 5.3, p. 901]{MW}.
We are unable to apply this method in the present context since
it is not clear that the solution with initial data in $\X$ will remain
in $\X$ throughout its entire $C_0(\Omega)$ trajectory.  On the other hand, as the proof
of Theorem~\ref{globalproperties}(ii) shows, a similar idea can be used
to obtain lower estimates for $T_{\max} (\lambda u_0)$,
for all $\lambda > 0$.  The idea is to observe that $T_{\max} (\lambda u_0)$
is larger than the $T$ obtained by the fixed point argument, and then
apply the fixed point argument to initial values which are multiples $\lambda u_0$ of a given function $u_0$.  This idea is developed more fully
in \cite{TW4}.

}
\end{rem}

We have the following proposition showing in particular that the local existence result of Theorem \ref{th3} is optimal. We also give initial data which give rise to global solutions for $\alpha>2/(\gamma+m)$.

\begin{prop}
\label{optimalxrho} Let the positive integer $m$ and the real numbers
$\alpha,\; \gamma$ be such that $$1\leq m \leq N,\; 0<\gamma<N,\; \alpha>2/(\gamma+m) .$$
Then the following holds.
\begin{itemize}
\item[(i)] Assume $a=1.$ Let $\psi\in  \X$ such that $\psi\geq 0$ and  there exist $c>0$ and $L>0$ such that
 $$\psi(x)\geq c\psi_0(x),\; \forall\; x\in \Omega,\; |x|\leq L.$$ Then there is no local nonnegative solution of \eqref{intequOmega} on $\Omega$ with initial data $\psi.$
\item[(ii)] Assume $a=\pm 1.$ Let $\psi\in \X.$ Assume that there exists $t_0>0$ such that $$|\psi|\leq {\rm e}^{t_0\Delta_\Omega}\psi_0.$$
 Then for $\lambda>0$ sufficiently small there exists a global solutions $u$ of \eqref{intequOmega} on $\Omega$ with initial data $\lambda\psi.$
 Moreover, there exists $M>0,$ such that
 $$|u(t,x)|\leq M {\rm e}^{(t+t_0)\Delta_\Omega}\psi_0(x),\; \forall\; t>0,\; \forall\; x\in \Omega.$$
 \end{itemize}
\end{prop}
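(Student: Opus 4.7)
For Part (i), my plan is to argue by contradiction via Weissler's a priori bound \eqref{ineqinitia2}. If a nonnegative local solution $u\in C((0,T];C_0(\Omega))$ with initial data $\psi$ existed, then \cite[Theorem 1, p. 546]{W4} (applied as in the proof of Theorem \ref{globalproperties}(iii)) would give
\[
\|{\rm e}^{t\Delta_\Omega}\psi\|_\infty\le(\alpha t)^{-1/\alpha},\quad t\in(0,T].
\]
The hypothesis $\psi\ge c\psi_0$ on $\{|x|\le L\}\cap\Omega$, together with the positivity of the heat kernel, lets me replace $\psi$ on the left by the truncation $\widetilde\psi_0:=\psi_0\chi_{\{|x|\le L\}}$, at the cost of absorbing $c$.

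The technical heart of (i) is then to show that $\|{\rm e}^{t\Delta_\Omega}\widetilde\psi_0\|_\infty$ still blows up like $t^{-(\gamma+m)/2}$ as $t\searrow 0$. Writing $\psi_0=\widetilde\psi_0+\psi_0\chi_{\{|x|>L\}}$, the elementary inequality $\psi_0(x)\le c_{m,\gamma}|x|^{-(\gamma+m)}$ (from $x_1\cdots x_m\le|x|^m$) makes the complement uniformly bounded, while the self-similarity identity \eqref{triangle} gives $\|{\rm e}^{t\Delta_\Omega}\psi_0\|_\infty=\|{\rm e}^{\Delta_\Omega}\psi_0\|_\infty\,t^{-(\gamma+m)/2}$. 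Using the $L^\infty$-contractivity of ${\rm e}^{t\Delta_\Omega}$ on $\Omega$ to subtract, multiplying by $t^{(\gamma+m)/2}$ and sending $t\searrow 0$, yields $c\|{\rm e}^{\Delta_\Omega}\psi_0\|_\infty\le 0$, since $\alpha>2/(\gamma+m)$ makes the exponent $(\gamma+m)/2-1/\alpha$ strictly positive. This is the required contradiction.

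For Part (ii), I plan a contraction mapping argument on $[0,\infty)$ modeled on the proof of Theorem \ref{th4}, with $\Psi$ replaced by the regularized version $\widetilde\Psi(t):={\rm e}^{(t+t_0)\Delta_\Omega}\psi_0$. Two features of $\widetilde\Psi$ drive the argument: the semigroup identity ${\rm e}^{(t-\sigma)\Delta_\Omega}\widetilde\Psi(\sigma)=\widetilde\Psi(t)$, needed for the Duhamel estimate, and the decay $\|\widetilde\Psi(t)\|_\infty=\|{\rm e}^{\Delta_\Omega}\psi_0\|_\infty(t+t_0)^{-(\gamma+m)/2}$ from \eqref{triangle}. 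Since $\alpha>2/(\gamma+m)$, the latter yields the decisive integrability $A:=\int_0^\infty\|\widetilde\Psi(\sigma)\|_\infty^\alpha\,d\sigma<\infty$, which plays the same role here as the finite-$T$ integrability \eqref{2.3deTW-I} did in Theorem \ref{th4}.

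I would then take $Y=\{u\in C((0,\infty);C_0(\Omega)):|u(t,x)|\le M\widetilde\Psi(t,x)\}$ with $M=2\lambda$, and verify that
\[
\Phi(u)(t):={\rm e}^{t\Delta_\Omega}(\lambda\psi)+a\int_0^t{\rm e}^{(t-\sigma)\Delta_\Omega}\bigl(|u|^\alpha u\bigr)(\sigma)\,d\sigma
\]
contracts $Y$ into itself for $\lambda$ small. The hypothesis $|\psi|\le{\rm e}^{t_0\Delta_\Omega}\psi_0$ bounds the linear term by $\lambda\widetilde\Psi(t)$, and the Duhamel computation from the proof of Theorem \ref{th4}, with $\Psi$ replaced by $\widetilde\Psi$, bounds the nonlinear term by $M^{\alpha+1}A\,\widetilde\Psi(t)$; smallness conditions of the form $2^{\alpha+1}\lambda^\alpha A\le 1$ and $2(\alpha+1)(2\lambda)^\alpha A<1$ then suffice. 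Banach's theorem delivers the global solution with the stated pointwise bound. The main obstacle is Part (i): the truncation that lets me invoke the local hypothesis $\psi\ge c\psi_0$ near the origin must preserve the near-origin singularity of $\psi_0$, and it is precisely the scaling identity \eqref{triangle} that keeps enough of the singularity alive to beat $(\alpha t)^{-1/\alpha}$ in the supercritical regime; without it the argument would not close. Part (ii) is essentially a reprise of Theorem \ref{th4} on an infinite interval, made feasible by the $L^\alpha$-in-time integrability that supercriticality provides.
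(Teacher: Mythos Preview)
Your proposal is correct. Part (ii) is exactly the paper's approach: the paper sets $\Psi_{t_0}(t)=\Psi(t+t_0)$ (your $\widetilde\Psi$), observes from \eqref{triangle} that $\int_0^\infty\|\Psi_{t_0}(\sigma)\|_\infty^\alpha\,d\sigma<\infty$ when $\alpha>2/(\gamma+m)$, and reruns the contraction argument of Theorem~\ref{th4} on $(0,\infty)$.

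For Part (i) your route differs slightly from the paper's, and is in fact a bit more elementary. The paper splits $\psi=\psi\,1_{\{|x|\le L\}}+\psi\,1_{\{|x|>L\}}$ and, to handle the tail, uses the rescaling identity $D_{\sqrt t}e^{t\Delta_\Omega}=e^{\Delta_\Omega}D_{\sqrt t}$ together with the weak$^*$ continuity result \cite[Proposition~4.1(ii)]{MTW} to show $t^{(\gamma+m)/2}\|e^{t\Delta_\Omega}(\psi\,1_{\{|x|>L\}})\|_\infty\to 0$. You instead split $\psi_0$ and use the simpler observation that $\psi_0\,1_{\{|x|>L\}}\in L^\infty(\Omega)$ (from $x_1\cdots x_m\le|x|^m$), so $L^\infty$-contractivity of $e^{t\Delta_\Omega}$ bounds the tail term by a fixed constant, which dies upon multiplication by $t^{(\gamma+m)/2}$. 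Your argument avoids the appeal to \cite{MTW}; the paper's version, on the other hand, works verbatim even if one only assumes $\psi\in\X$ for the tail (without using the explicit pointwise decay of $\psi_0$).
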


\begin{rems}$\;${\rm

\begin{itemize}
\item[(i)]The case $m=0$ of (i) in the previous proposition is known, see  \cite[Theorem 15.2, p. 76 and pp. 85-86]{QS}.
\item[(ii)] In \cite{LM,M1,M2,K} global solutions of \eqref{intequOmega} are proved to exist in the case $\alpha>2/(N+m).$  The initial data giving global existence, considered in these papers, are dominated by
Gaussian functions. In the part (ii) of the previous proposition, the initial data may have polynomial decay.
\item[(iii)] It is known that if $\alpha>2/N$ and for $\psi\in C_0(\R^N)$, and
\begin{equation}
\label{sigmest}
 |\psi(x)|\leq C(1+|x|^2)^{-\sigma/2},\; \sigma>2/\alpha
 \end{equation}
 with $C$ is sufficiently small,
 then the solution of   \eqref{NLHint} with initial value $u_0 = \psi$ is global. See \cite[Theorem 3, p. 32]{W2}.  Indeed, $\psi \in L^{q_c}(\R^N)$ with $q_c = N\alpha/2 > 1$.
By the previous proposition part (ii), using \cite[Lemma 2.6, p. 355]{MTW}, the initial value $\psi$ satisfies \eqref{sigmest} with $\sigma=\gamma+m>2/\alpha.$
But here we do not impose the condition $\alpha>2/N.$ In other words, this proposition allows the possibility that $\psi \in L^{q_c}(\R^N)$ with $q_c = N\alpha/2 \le 1$.
\end{itemize}}

\end{rems}
\begin{proof}[Proof of Proposition \ref{optimalxrho}]
(i) Assume that a local nonnegative solution  $u$ of  \eqref{NLHint} exists on $(0,T]$ with initial data $\psi$. Then  we have
\begin{eqnarray*}
u(t)& =& {\rm e}^{t\Delta_\Omega}\psi + \int_0^t {\rm
e}^{(t-\sigma)\Delta_\Omega} \big(u(\sigma)^{\alpha+1}\big)
d\sigma
\end{eqnarray*}
on $(0,T]\times\Omega.$
As in the proof of Theorem \ref{globalproperties} (iii), by \cite[Theorem 1, p. 546]{W4}, we have that
\begin{equation}
\label{ineqinitial}
{\rm e}^{t\Delta_\Omega}\psi\leq \left(\alpha t\right)^{-1/\alpha} ,\; \mbox{ on } \; (0,T]\times\Omega,
\end{equation}
for all $T<T_{\max}(\psi).$
 Write
$$\psi=\psi 1_{\{|x|\leq L\}}+\psi 1_{\{|x|> L\}}.$$  Then
\begin{equation}
\label{inqfwbis}
t^{{\gamma+m\over 2}}\|{\rm e}^{t\Delta_\Omega}\psi\|_\infty\geq t^{{\gamma+m\over 2}}\|{\rm e}^{t\Delta_\Omega}(\psi 1_{\{|x|\leq L\}})\|_\infty
-t^{{\gamma+m\over 2}}\|{\rm e}^{t\Delta_\Omega}(\psi 1_{\{|x|> L\}})\|_\infty.
\end{equation}
Using the relation \eqref{etoile}, we get first that
\begin{eqnarray}
\nonumber t^{{\gamma+m\over 2}}\|{\rm e}^{t\Delta_\Omega}(\psi 1_{\{|x|\leq L\}})\|_\infty&\geq &
ct^{{\gamma+m\over 2}}\|{\rm e}^{t\Delta_\Omega}(\psi_0 1_{\{|x|\leq L\}})\|_\infty\\ \nonumber &= &
ct^{{\gamma+m\over 2}}\|{\rm e}^{t\Delta_\Omega}(\psi_0 1_{\{|x|\leq L\}})(\sqrt{t}\;\cdot)\|_\infty\\&& \hspace{-2cm} \label{etoile4} =
c\|{\rm e}^{\Delta_\Omega}(\psi_0 1_{\{|x|\leq {L\over \sqrt{t}}\}})\|_\infty\to c\|{\rm e}^{\Delta_\Omega}\psi_0 \|_\infty\;
\mbox{ as }\; t\to 0,
\end{eqnarray}
and second, we have
\begin{eqnarray}
\nonumber t^{{\gamma+m\over 2}}\|{\rm e}^{t\Delta_\Omega}(\psi 1_{\{|x|> L\}})\|_\infty&\leq &
\|\psi\|_\X t^{{\gamma+m\over 2}}\|{\rm e}^{t\Delta_\Omega}(\psi_0 1_{\{|x|> L\}})\|_\infty\\ \nonumber &= &
\|\psi\|_\X t^{{\gamma+m\over 2}}\|{\rm e}^{t\Delta_\Omega}(\psi_0 1_{\{|x|> L\}})(\sqrt{t}\;\cdot)\|_\infty\\ \label{triangle3} &= &
\|\psi\|_\X \|{\rm e}^{\Delta_\Omega}(\psi_0 1_{\{|x|> {L\over \sqrt{t}}\}})\|_\infty\to 0\;
\mbox{ as }\; t\to 0.
\end{eqnarray}
The  convergence results follow by \cite[Proposition 4.1 (ii), p. 359]{MTW}. Then for $t>0$
 sufficiently small, by \eqref{inqfwbis}, \eqref{etoile4} and \eqref{triangle3}, there exists $C>0$ a constant such that
 \begin{equation}
 \label{cercle2}
 \|{\rm e}^{t\Delta_\Omega}\psi\|_\infty\geq Ct^{-{\gamma+m\over 2}}.
 \end{equation}
By \eqref{ineqinitia2} and \eqref{cercle2}, we have
$$
0<C\leq t^{(\gamma+m)/2}\|{\rm e}^{t\Delta_\Omega}\psi\|_\infty \leq \left(\alpha  t\right)^{-1/\alpha}t^{(\gamma+m)/2},\; t\in (0,T].
$$
This leads to a contradiction for $t$ sufficiently small and $\alpha>2/(\gamma+m).$ Then there is no local nonnegative solution.

(ii) Let $t_0>0$ and $\psi\in \X$ be as in the statement of the proposition. Let $$\Psi_{t_0}(t)=\Psi(t_0+t),\; t>0.$$
Then by \eqref{triangle}, we have
$$\|\Psi_{t_0}(t)\|_\infty=\|\Psi(t_0+t)\|_\infty= \|e^{\Delta} \psi_0\|_\infty (t_0+t)^{-(\gamma + m)/2},\; \forall \; t>0.$$
Hence, for any $\alpha>2/(\gamma+m),$
\begin{equation}
\label{2.3deTW-Ibis}\int_0^\infty \left\|\Psi_{t_0}(t)\right\|_\infty^\alpha dt<\infty.
\end{equation}
That is, \eqref{2.3deTW-I} is satisfied by $\Psi_{t_0}$ with $A=\infty,$  in place of $\Psi.$ We will apply Theorem \ref{th4} with $\Psi$ replaced
by $\Psi_{t_0}$ and $T=\infty.$ Let $K > 0$,  $M>0$ be such
that
$$ K+2(\alpha+1)M^{\alpha+1}\int_0^\infty \|\Psi_{t_0}(\sigma)\|_\infty^\alpha d\sigma\leq
M
$$
and
$$ 2(\alpha+1)M^{\alpha}\int_0^\infty \|\Psi_{t_0}(\sigma)\|_\infty^\alpha
d\sigma<1.$$
Let $\lambda>0$ be such that $\|\lambda\psi\|_\X\leq K.$ Then by Theorem \ref{th4}, there exists a unique solution  $u\in X_{\infty,\Psi_{t_0}}$ of
\eqref{intequOmega} on $(0,\infty)$ such that
$$\||u|\|_{X_{\infty,\Psi_{t_0}}}\leq M.$$ This proves the existence of global solution of \eqref{intequOmega} satisfying  the estimate
$$|u(t)|\leq M \Psi_{t_0}(t),\; \forall \, t>0,$$ on $\Omega$. This finishes the proof of the proposition.
\end{proof}

\section{well-posedness on the whole space}
\label{Wmgamma}
\setcounter{equation}{0}
In this section we use the results of the previous
section and an anti-symmetric  reflection argument to construct a solution of the nonlinear heat equation \eqref{NLHint} on $\R^N$ with singular initial values. In particular, we prove Theorems \ref{th1}, \ref{th3}  and \ref{th3bb}.
As in \cite{TW-I}, for $1 \le i \le N,$ let $T_i$ be the
operator defined on the space of functions $f$ by
$$[T_if](x_1,\,\cdots,\,x_{i-1},\,x_i,\,x_{i+1},
\,\cdots,\,x_N) =
f(x_1,\,\cdots,\,x_{i-1},\,-x_i,\,x_{i+1},\,\cdots,\, x_N).$$ A function  $f$ is anti-symmetric in $x_1,\; \cdots, x_m$ if it satisfies
\begin{equation}
\label{sym} T_{1}f = T_{2}f = \cdots = T_{m}f = -f.
\end{equation}
We denote the space of functions  which are  anti-symmetric in $x_1,\; \cdots, x_m$ by
\begin{equation}
\label{defA}
{\mathcal A} :=\left\{f : \R^N \rightarrow \R; \; f \mbox{  satisfies  }\; \eqref{sym}\right\}.
\end{equation}

\begin{defini}
Let $\Omega \subset \R^N$ be the domain given by \eqref{dmn}, and suppose $g : \overline \Omega \to \R$ is such that $g_{|\partial\Omega} \equiv 0$.
We denote by $\widetilde{g}: \R^N \rightarrow \R$ the anti-symmetric extension of $g$, i.e.
$\widetilde{g} \in {\mathcal A}$ and $\widetilde{g}_{|\Omega} = g$.
\end{defini}

If $g\in C_0(\Omega),$
 then $\widetilde{g}\in C_0(\R^N)$. It is proved in \cite[formula (3.6), p. 514]{TW-I} that
 \begin{equation}
 \label{heatsgrel}
 {\rm e}^{t\Delta_\Omega}g=\left({\rm e}^{t\Delta}\widetilde{g}\right)_{|\overline{\Omega}},\; \forall\; t>0,\;\forall\; g\in C_0(\R^N).
\end{equation}
Moreover, the commutation relation \eqref{commutation} is valid
with ${\rm e}^{t\Delta_\Omega}$ instead of $ {\rm e}^{t\Delta}$, and in particular
\begin{equation}
\label{commOmega}
D_{\sqrt t}{\rm e}^{t\Delta_\Omega} = {\rm e}^{\Delta_\Omega}D_{\sqrt t}
\end{equation}
for all $t > 0$, where the dilation operators $D_\lambda$ are given by \eqref{dilation}.

The following proposition is immediate.

\begin{prop}
\label{inteqrel}
Let $g \in C_0(\Omega)$ and let $v:[0, T_{\max}(g)) \to C_0(\Omega)$
be the maximal solution of \eqref{intequOmega} with initial value $v_0 = g$.
It follows that $\widetilde v :[0, T_{\max}(g)) \to C_0(\R^N) \cap {\mathcal A}$ is the maximal solution of \eqref{NLHint}  with initial value
$u_0 = \widetilde g \in C_0(\R^N)$.  In particular, $T_{\max}(g) =  T_{\max}(\widetilde g)$.

Similarly, let $f\in C_0(\R^N) \cap {\mathcal A}$ and let
$u :[0, T_{\max}(f)) \to C_0(\R^N)$ be the maximal solution of
\eqref{NLHint} with initial value $u_0 = f$.  It follows that $u(t) \in {\mathcal A}$
for all $0 \le t < T_{\max}(f)$ and that
$u_{|\overline{\Omega}}:[0, T_{\max}(f)) \to C_0(\Omega)$
is the maximal solution of \eqref{intequOmega} with initial value
 $v_0 = f_{|\overline{\Omega}} \in C_0(\Omega)$. In particular,
 $T_{\max}(f) =  T_{\max}(f_{|\overline{\Omega}})$.
\end{prop}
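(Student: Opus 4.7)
The plan is to exploit the two elementary facts that (i) the nonlinearity $u \mapsto |u|^\alpha u$ is an odd function of $u$, hence preserves anti-symmetry in any coordinate, and (ii) the heat semigroup intertwining formula \eqref{heatsgrel} identifies $e^{t\Delta_\Omega}$ with the restriction of $e^{t\Delta}$ acting on anti-symmetric extensions. Together these give a bijection between solutions of \eqref{intequOmega} on $\Omega$ and anti-symmetric solutions of \eqref{NLHint} on $\R^N$ with matching initial data, and the two maximal existence times must coincide.

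For the first direction, start with $v \in C([0,T_{\max}(g));C_0(\Omega))$ solving \eqref{intequOmega} with $v(0)=g$, and define $\widetilde{v}(t) := \widetilde{v(t)}$ for $t \in [0,T_{\max}(g))$. Since $v(t) \in C_0(\Omega)$ for all $t$, we get $\widetilde v(t) \in C_0(\R^N) \cap \mathcal A$, and the map $t \mapsto \widetilde v(t)$ is continuous into $C_0(\R^N)$ because anti-symmetric extension is an isometry between $C_0(\Omega)$ and $C_0(\R^N) \cap \mathcal A$. The oddness of the nonlinearity gives $\widetilde{|v|^\alpha v} = |\widetilde v|^\alpha \widetilde v$. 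Applying the anti-symmetric extension to the integral equation and using \eqref{heatsgrel} together with the fact that extension commutes with the Bochner integral in $C_0$, I obtain
\begin{equation*}
\widetilde v(t) = e^{t\Delta}\widetilde g + a\int_0^t e^{(t-\sigma)\Delta}\bigl(|\widetilde v(\sigma)|^\alpha \widetilde v(\sigma)\bigr)\,d\sigma,
\end{equation*}
so $\widetilde v$ solves \eqref{NLHint} with initial value $\widetilde g$.

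For the converse, let $u \in C([0,T_{\max}(f));C_0(\R^N))$ solve \eqref{NLHint} with $u(0)=f \in \mathcal A$. For each $1 \le i \le m$, apply the reflection operator $T_i$ to both sides of \eqref{NLHint}. Since the Gauss kernel \eqref{gk} is even in each variable, $T_i$ commutes with $e^{t\Delta}$, and since $|{-}u|^\alpha(-u) = -|u|^\alpha u$, the function $-T_i u$ solves the same integral equation \eqref{NLHint} with the same initial value $-T_i f = f$. By uniqueness in $C_0(\R^N)$, $T_i u(t) = -u(t)$ for all $t$ and all $i = 1,\dots,m$, so $u(t) \in \mathcal A$ throughout the lifespan. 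Restricting $u$ to $\overline{\Omega}$ and invoking \eqref{heatsgrel} once more (together with the identity $(|u|^\alpha u)_{|\overline\Omega} = |u_{|\overline\Omega}|^\alpha u_{|\overline\Omega}$) shows that $u_{|\overline\Omega}$ satisfies \eqref{intequOmega} with initial value $f_{|\overline\Omega}$.

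The equality of maximal existence times is then immediate from standard continuation theory. The operations $g \mapsto \widetilde g$ and $f \mapsto f_{|\overline\Omega}$ are mutually inverse bijections between $C_0(\Omega)$ and $C_0(\R^N)\cap\mathcal A$, preserving the $L^\infty$ norm; since blowup in either equation is characterized by the $L^\infty$ norm diverging, the two maximal times must agree, and any strict extension of one solution past its $T_{\max}$ would produce a strict extension of the other, a contradiction. There is no real obstacle in this proof; the only point requiring care is the interchange of anti-symmetric extension with the Bochner integral of $e^{(t-\sigma)\Delta}(|u|^\alpha u)$, which follows from the fact that extension is a bounded linear operator on $C_0$ and the integrand is continuous in $\sigma$ with values in $C_0$.
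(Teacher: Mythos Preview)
Your proof is correct. The paper itself does not give a detailed proof of this proposition; it simply states that the result ``is immediate'' and later references the analogous \cite[Proposition 3.3, p. 515]{TW-I}. Your argument spells out precisely the reasoning the authors have in mind: the oddness of $u\mapsto|u|^\alpha u$ preserves anti-symmetry, the intertwining relation \eqref{heatsgrel} transfers the semigroups, and uniqueness in $C_0$ forces the anti-symmetry to propagate and the maximal times to match.
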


The analogous result is clearly true in other spaces where the
two integral equations are well-defined.  The point of this paper
is to use the solutions on $\Omega$ to construct solutions
on $\R^N$, which is not quite the same as the above proposition.

If $\psi\in \X$, given by \eqref{spc}, it is proved in \cite[Definition 1.6, p. 346 and Proposition 5.1, p. 361]{MTW} that the pointwise
 anti-symmetric extension $\widetilde{\psi}$ of $\psi$ to $\R^N$ has a natural interpretation as a tempered distribution. It is natural to define
\begin{equation}
\label{extention}
{\widetilde{\X}}=\left\{\varphi\in L^1_{\mbox{loc}}\left(\R^N\setminus\{0\}\right)\cap {\mathcal A} ,\;  \varphi|_{\Omega}\in {\mathcal X}\right\}=\{\widetilde{\psi}\; |\; \psi\in \X\} \subset \Sd.
\end{equation}
and
\begin{equation}
\label{extentionK}
{\widetilde{{\mathcal B}_K}}:=\{{\widetilde{\psi}}\;|\; \psi\in {\mathcal B}_K\},
\end{equation}
where ${\mathcal B}_K, K > 0$, is given by \eqref{ball}. Also ${\widetilde{{\mathcal B}_K^*}}$ inherits the metric space
structure from  ${\mathcal B}_K^*$ (recall Definition~\ref{BKstar}). If $m = 0$, then $\widetilde{\X} = \X$ and
 $\widetilde{{\mathcal B}_K} ={\mathcal B}_K$.

We recall the following result from \cite{MTW} and \cite{CDW-DCDS} showing the equivalence of various kinds of convergence.

\begin{prop}[Propositions 3.1(i) and 5.1 in \cite{MTW} and Proposition 2.1(i) in \cite{CDW-DCDS}]
\label{equivlim}
Let $0 < \gamma < N$ and $m$ an integer with $0 \le m \le N$,
and let $(\psi_k)_{k = 1}^\infty \subset {\mathcal B}_K$ and $\psi\in {\mathcal B}_K$. The following are equivalent:
 \begin{itemize}
  \item[(i)] $\psi_k\to \psi$ in ${\mathcal B}_K^*$ as $k\to \infty$ ;
  \item[(ii)] $\psi_k\to \psi$ in $\Dd$ as $k\to \infty$ ;
  \item[(iii)] ${\widetilde{\psi_k}}\to {\widetilde{\psi}}$ in $\Sd$ as $k\to \infty$.
 \end{itemize}
\end{prop}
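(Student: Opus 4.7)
My plan is to split the equivalence into two pieces: the equivalence $\rm{(i)}\Leftrightarrow\rm{(ii)}$, which is an abstract duality statement inside $\X=\Y'$, and the equivalence $\rm{(ii)}\Leftrightarrow\rm{(iii)}$, which encodes that the anti-symmetric extension map $\psi\mapsto\widetilde\psi$ is a topological embedding on bounded sets. The case $m=0$ is trivial, so the whole argument is carried out for $m\ge 1$.

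For $\rm{(i)}\Leftrightarrow\rm{(ii)}$ I would argue as follows. Since $\Y$ is separable and $\mathcal{B}_K$ is a norm-bounded set in the dual $\X=\Y'$, the weak-$\star$ topology on $\mathcal{B}_K$ is metrizable and it suffices to test convergence against any fixed dense subset of $\Y$. The direction $\rm{(i)}\Rightarrow\rm{(ii)}$ is immediate once one notes that any $\phi\in\mathcal{D}(\Omega)$ lies in $\Y$: indeed $\phi$ has compact support in $\Omega$, hence stays bounded away from $\partial\Omega\cup\{0\}$, so $|\phi\,\psi_0|$ is bounded with compact support and thus in $L^1(\Omega)$. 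For $\rm{(ii)}\Rightarrow\rm{(i)}$ the key step is the density of $\mathcal{D}(\Omega)$ in $\Y$; given this, the standard $\varepsilon/3$ argument combined with the uniform bound $\|\psi_k\|_\X,\|\psi\|_\X\le K$ promotes convergence against test functions to convergence against all of $\Y$. To prove the density, I would use the weight $\psi_0>0$: the map $\psi\mapsto\psi\psi_0$ identifies $\Y$ isometrically with $L^1(\Omega)$ (via $g\mapsto g/\psi_0$), under which $\mathcal{D}(\Omega)$ corresponds to functions in $L^1(\Omega)$ that vanish near $\partial\Omega\cup\{0\}$ and are smooth and compactly supported; this is dense in $L^1(\Omega)$ by a standard cut-off and mollification argument.

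For $\rm{(ii)}\Leftrightarrow\rm{(iii)}$, I would first recall the definition of $\widetilde\psi\in\Sd$ for $\psi\in\X$ from \cite{MTW}: given $\phi\in\mathcal{S}(\R^N)$, let
\begin{equation*}
\phi_a(x)=\frac{1}{2^m}\sum_{\varepsilon\in\{\pm 1\}^m}\Big(\prod_{i=1}^m\varepsilon_i\Big)\phi(\varepsilon_1 x_1,\ldots,\varepsilon_m x_m,x_{m+1},\ldots,x_N)
\end{equation*}
be the projection onto the anti-symmetric part. Then $\langle\widetilde\psi,\phi\rangle:=2^m\int_\Omega \psi\,\phi_a\,dx$. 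I would verify this pairing is absolutely convergent for every $\psi\in\X$: since $\phi_a\in\mathcal{S}(\R^N)$ is anti-symmetric in each of $x_1,\ldots,x_m$, it vanishes to order one on each hyperplane $\{x_i=0\}$, so $|\phi_a(x)|\le C x_1\cdots x_m$ near $\partial\Omega$ and decays faster than any polynomial at infinity; combined with $|\psi|\le K\psi_0=K c_{m,\gamma}x_1\cdots x_m|x|^{-\gamma-2m}$ this gives $|\psi\phi_a|\lesssim x_1^2\cdots x_m^2|x|^{-\gamma-2m}\cdot(\text{Schwartz tail})$, which is integrable since $\gamma<N$. In particular $\phi_a{}_{|\Omega}\in\Y$, so the pairing equals $2^m\langle \psi,\phi_a{}_{|\Omega}\rangle_{\X,\Y}$. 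The implication $\rm{(iii)}\Rightarrow\rm{(ii)}$ is then immediate by testing against $\phi\in\mathcal{D}(\Omega)\subset\mathcal{S}(\R^N)$ (extended by zero), since for such $\phi$ one has $\phi_a{}_{|\Omega}=2^{-m}\phi$ on $\Omega$. For $\rm{(i)}\Rightarrow\rm{(iii)}$, note that for any fixed $\phi\in\mathcal{S}(\R^N)$ the element $\phi_a{}_{|\Omega}\in\Y$ is independent of $k$, and weak-$\star$ convergence in $\X$ gives $\langle \psi_k,\phi_a{}_{|\Omega}\rangle\to\langle\psi,\phi_a{}_{|\Omega}\rangle$, hence $\langle\widetilde{\psi_k},\phi\rangle\to\langle\widetilde\psi,\phi\rangle$. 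Chaining with $\rm{(i)}\Leftrightarrow\rm{(ii)}$ closes the equivalence.

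The main obstacle I expect is not the weak-$\star$ metrizability (which is standard) but the careful bookkeeping at the boundary and at the origin: specifically, establishing that $\mathcal{D}(\Omega)$ is dense in $\Y$ with its weighted $L^1$ norm, and showing that the distributional pairing defining $\widetilde\psi$ on Schwartz functions coincides (as a $\Y$-$\X$ pairing) with the pairing against $\phi_a{}_{|\Omega}$. Both rest on the fact that the order of vanishing of anti-symmetric Schwartz functions on $\partial\Omega$ is exactly matched to the homogeneity and boundary behavior of $\psi_0$, which is why the critical integrability $\gamma<N$ suffices.
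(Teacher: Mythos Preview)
Your argument is correct. Note, however, that the paper does not actually prove this proposition: it is stated as a recalled result, with the proof deferred to the cited references (Propositions~3.1(i) and~5.1 of \cite{MTW} for $m\ge 1$, and Proposition~2.1(i) of \cite{CDW-DCDS} for $m=0$). What you have written is a faithful reconstruction of the arguments in those references: the equivalence $\rm{(i)}\Leftrightarrow\rm{(ii)}$ via density of $\mathcal{D}(\Omega)$ in $\Y$ and the uniform $\X$-bound is precisely the content of \cite[Proposition~3.1(i)]{MTW}, and the link to $\Sd$ through the anti-symmetric projection $\phi\mapsto\phi_a$ and the observation that $\phi_a{}_{|\Omega}\in\Y$ for Schwartz $\phi$ is the substance of \cite[Proposition~5.1]{MTW}. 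One small sharpening: the multiplication map $\psi\mapsto\psi\psi_0$ is in fact a bijection of $\mathcal{D}(\Omega)$ onto itself (since $\psi_0$ and $1/\psi_0$ are both smooth and positive on $\Omega$), so the density of $\mathcal{D}(\Omega)$ in $\Y$ reduces directly to the density of $\mathcal{D}(\Omega)$ in $L^1(\Omega)$, with no need for an additional cut-off step.
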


 \begin{rem}
 
 \label{equivlim2} {\rm
If $(\psi_k)_{k = 1}^\infty \subset {\mathcal B}_K$, $\psi\in L^1_{\rm loc}(\Omega)$, and $\psi_k\to \psi$ in $\Dd$ as $k\to \infty$, then in fact 
$\psi\in {\mathcal B}_K$ and the above conditions hold.  This follows either from the
compactness of ${\mathcal B}_K^*$ ( so that the $(\psi_k)_{k = 1}^\infty$ must have
a convergent subsequence in ${\mathcal B}_K^*$), or from the fact that convergence in $\Dd$
preserves a uniform bound.
}
\end{rem}

Moreover, (\cite[Proposition 5.1]{MTW}),
 $$\widetilde{{\rm e}^{t\Delta_\Omega}\psi}= {\rm e}^{t\Delta}\widetilde{\psi},\; \forall\; t>0,\;\forall\;\psi\in \X.$$
For $0<\gamma<N,$ we put
\begin{equation}  \label{phi0}%
\varphi_0 = (-1)^m \partial_1 \partial_2 \cdots \partial_m
\left( \vert \cdot \vert^{-\gamma} \right) \in \Sd.
\end{equation}
By \cite[Proposition 2.1 p. 347]{MTW}, it follows that
 \begin{equation}
 \label{psi0isphi0}
 \widetilde \psi_0 =\varphi_0,
  \end{equation}
  in $\Sd$.

 By \eqref{phi0}, if follows that
\begin{eqnarray}  \label{relphipsi}%
\Phi(t)=e^{t\Delta} \varphi_0 &=& G_t\star \varphi_0
 = \Phi_0(t) \star  |\cdot|^{-\gamma}\\
 \label{relphipsi1}
 &=& (-1)^m \partial_1 \partial_2 \cdots \partial_m
\left(e^{t\Delta}|\cdot|^{-\gamma}\right),
\end{eqnarray}
in $\Sd,$ where  $\Phi_0$ is given by
\begin{equation} \label{ksi0}%
\Phi_0 (t) =(-1)^m \partial_1 \partial_2 \cdots \partial_m G_t,\
t>0 .
\end{equation}
We have that $\Phi : (0,\infty) \to C_0(\R^N)$ is a solution of
the linear heat equation such that $T_{i}\Phi(t) = -\Phi(t)$ for all
$1 \le i \le m$. The key observation, to use the results of the previous section,  is that, by \cite[Proposition 2.1, p. 347 ]{MTW}, we have
$$\Phi(t)_{|{\overline{\Omega}}}=\left(e^{t\Delta} \varphi_0\right)_{|{\overline{\Omega}}}={\rm e}^{t\Delta_\Omega}\psi_0=\Psi(t),\; \forall\; t>0.$$
Also $$\widetilde{\Psi}(t)=\Phi(t),\; \forall\; t>0,$$
where $\widetilde{\Psi}(t)$
is the unique anti-symmetric extension of $\Psi(t)$ to $\R^N.$ In particular, we have $$\|\Phi(t)\|_{L^\infty(\R^N)}=\|\Psi(t)\|_\infty,\; \forall\; t>0.$$

We now wish to re-formulate Theorem ~\ref{th4} in terms of a result on the whole space $\R^N$. This reformulation is possible thanks to  \cite[Proposition 3.3, p. 515]{TW-I}, which is analogous to Proposition~\ref{inteqrel}. The following theorem is thus an immediate consequence of
Theorem ~\ref{th4} and Proposition~\ref{equivlim}. Of course in the case $m = 0$,
the following result is the same as Theorem ~\ref{th4}.

\begin{To}[well-posedness in ${\widetilde{\X}}$]
\label{th3'}
Let the positive integer $m$ and the real numbers
$\alpha,\, \gamma$ be such that $$ 0\leq m \leq N,\; 0<\gamma<N, \; 0<\alpha<{2\over \gamma+m}.$$ Let $K,\; M,\; T>0$
be satisfying \eqref{e21n} and \eqref{e22n}. Then for every $u_0\in {\widetilde{\X}}$ with $||u_0||_{{\widetilde{\X}}} \le K$, there exists  a solution
$u \in C\left((0,T]; C_0(\R^N)\cap {\mathcal A}\right)$ of \eqref{NLHint} such that $u(t)\to u_0$ in $\Sd$ as $t\to 0.$ Furthermore,
\begin{itemize}
\item[(i)] $|u(t)|\leq M e^{t\Delta} \psi_0,\; \forall\; t\in [0,T],\; \forall \; x\in \Omega.$
\item[(ii)] If $v \in C\left((0,T]; C_0(\R^N)\cap {\mathcal A}\right)$ is a solution of \eqref{NLHint} such that $|v(t)|\leq M e^{t\Delta} \psi_0,\; \forall\; t\in [0,T],\; \forall \; x\in \Omega,$
with $v(0)=u_0$ then $u(t)=v(t),\; \forall\; t\in [0,T].$
\item[(iii)] Let $(u_{0,n})\subset {\widetilde{{\mathcal B}_K}},\; u_0\in {\widetilde{{\mathcal B}_K}}$ with $u_{0,n}\to u_0$ in $\Sd.$
Let $u_n$ be the solution of (\ref{NLHint}) with initial data $u_{0,n}$
and
$u$ be the solution of (\ref{NLHint}) with initial data $u_0$. Then $u_{n}(t)\to u(t)$
in $C_0(\R^N),$ $\forall\; t\in (0,T],$ and uniformly in $[t_0,T],\; \forall \; t_0\in (0,T).$
\end{itemize}
Moreover $u$ can be continued to a maximal
solution $u \in C\left((0,T_{\max}); C_0(\R^N)\right)$ and
$u(t) \in  {\mathcal A}$ for all $0 < t <T_{\max}$.

\end{To}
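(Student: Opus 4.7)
The plan is to deduce Theorem~\ref{th3'} directly from Theorem~\ref{th4} by transferring everything from the sector $\Omega$ to $\R^N$ through the anti-symmetric extension/restriction correspondence. Two ingredients from the previous section make this essentially automatic: first, the map $g \mapsto \widetilde g$ is an isometry from $C_0(\Omega)$ onto $C_0(\R^N)\cap \mathcal A$ (and a bijection from $\X$ onto $\widetilde\X$); second, the intertwining relation $\widetilde{{\rm e}^{t\Delta_\Omega}g} = {\rm e}^{t\Delta}\widetilde g$ holds both on $C_0(\Omega)$ by \eqref{heatsgrel} and on $\X$ by \cite[Proposition 5.1]{MTW}. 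Moreover, since $s\mapsto |s|^\alpha s$ is odd, it commutes with each reflection $T_i$, hence preserves $\mathcal A$ and satisfies $\widetilde{|g|^\alpha g}=|\widetilde g|^\alpha\widetilde g$ for $g\in C_0(\Omega)$.

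The construction is then as follows. Given $u_0 \in \widetilde\X$ with $\|u_0\|_{\widetilde\X} \le K$, set $\psi := u_0|_\Omega \in {\mathcal B}_K$. By Theorem~\ref{th4} there is a unique $v\in X_T$ with $|||v|||_{X_T}\le M$ solving \eqref{intequ}. Define $u(t) := \widetilde{v(t)}$ for $t\in(0,T]$; by the $C_0$-isometry, $u\in C((0,T];C_0(\R^N)\cap\mathcal A)$. To see that $u$ solves \eqref{NLHint}, apply $\widetilde{\cdot}$ to \eqref{intequ}: the linear term becomes ${\rm e}^{t\Delta}\widetilde\psi={\rm e}^{t\Delta}u_0$ by the intertwining relation on $\X$, while for each $\sigma\in(0,t)$ the integrand $|v(\sigma)|^\alpha v(\sigma)\in C_0(\Omega)$ satisfies $\widetilde{{\rm e}^{(t-\sigma)\Delta_\Omega}(|v(\sigma)|^\alpha v(\sigma))}={\rm e}^{(t-\sigma)\Delta}(|u(\sigma)|^\alpha u(\sigma))$ by the intertwining relation on $C_0(\Omega)$ and the oddness of the nonlinearity; Fubini (justified by absolute convergence \eqref{absconv}, which is a sup-norm estimate and transfers to $\R^N$ by isometry) turns the extension of the $\Omega$-integral into the $\R^N$-integral. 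The initial trace $u(t)\to u_0$ in $\Sd$ follows from Theorem~\ref{th4}(i), which gives $v(t)\to\psi$ in $\Dd$ (hence in ${\mathcal B}_{M'}^*$), combined with Proposition~\ref{equivlim}. Property~(i) is simply $|v(t,x)|\le M\Psi(t,x)$ on $\Omega$ rewritten via $\Psi(t)={\rm e}^{t\Delta_\Omega}\psi_0={\rm e}^{t\Delta}\varphi_0\big|_\Omega$.

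For uniqueness~(ii), any competitor $v\in C((0,T];C_0(\R^N)\cap\mathcal A)$ with the stated bound has restriction $v|_{\overline\Omega}\in X_T$ with $|||v|_{\overline\Omega}|||_{X_T}\le M$ solving \eqref{intequOmega} with initial data $\psi$ (here I use the analogue of Proposition~\ref{inteqrel} in the class $\X$: the anti-symmetry is preserved by the flow, so the restriction inherits the equation); Theorem~\ref{th4}'s uniqueness statement forces $v|_{\overline\Omega}$ to coincide with the solution above, and antisymmetry propagates this equality to all of $\R^N$. For continuous dependence~(iii), $u_{0,n}\to u_0$ in $\Sd$ is equivalent, via Proposition~\ref{equivlim}, to $u_{0,n}|_\Omega\to u_0|_\Omega$ in ${\mathcal B}_K^*$; then Theorem~\ref{th4}(iii) gives $v_n(t)\to v(t)$ in $C_0(\Omega)$ pointwise in $t\in(0,T]$ and uniformly on $[t_0,T]$, and the $C_0$-isometry transfers this to $u_n(t)\to u(t)$ in $C_0(\R^N)$ with the same uniformity. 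The maximal extension $u\in C((0,T_{\max});C_0(\R^N))$ is obtained by re-solving \eqref{NLHint} in $C_0(\R^N)$ starting from each $u(t_1)\in C_0(\R^N)$, standard because $|s|^\alpha s$ is locally Lipschitz on bounded sets; antisymmetry is preserved at each step because $\mathcal A$ is invariant under that $C_0$-flow (again by the oddness of the nonlinearity and the explicit form of the Gauss kernel).

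The only step requiring a little care is the transfer of the nonlinear Duhamel term under $\widetilde{\cdot}$: one must verify both that the integrand lies in $C_0(\Omega)$ (so the $C_0$-intertwining applies pointwise in $\sigma$) and that the sup-norm absolute integrability \eqref{absconv} justifies interchanging extension and integration. Both are already in hand: the first from $v(\sigma)\in C_0(\Omega)$ and continuity of the semigroup, and the second from $|||v|||_{X_T}\le M$ together with \eqref{2.3deTW-I}. Everything else is bookkeeping via the bijections and intertwining relations established in \cite{MTW} and Section~\ref{xmgamma}.
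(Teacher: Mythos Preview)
Your proof is correct and follows exactly the approach the paper intends: the paper states just before the theorem that it ``is thus an immediate consequence of Theorem~\ref{th4} and Proposition~\ref{equivlim},'' invoking \cite[Proposition 3.3, p.~515]{TW-I} for the anti-symmetric extension/restriction correspondence. You have simply spelled out the bookkeeping that the paper leaves implicit --- the intertwining relations, the oddness of the nonlinearity, and the transfer of convergence via Proposition~\ref{equivlim} --- and your treatment is more detailed than what appears in the paper itself.
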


We now give the proofs of Theorems \ref{th1}, \ref{th3} and \ref{th3bb}.

\begin{proof}[Proof of Theorem~\ref{th1}.] The proof  is the same as in \cite[Theorem 1.1, p. 506]{TW-I} which is valid also for minus sign in front of the nonlinearity.
The last statement in the theorem follows by the fact that in the absorption case the solutions are global. This completes the proof of the theorem.
\end{proof}

\begin{proof}[Proof of Theorem~\ref{th3}]  Theorem ~\ref{th3} is a particular case of Theorem \ref{th3'} with $u_0=K\varphi_0.$ Here we fix $K$ and we take $M,\; T$
such that \eqref{e21n} and \eqref{e22n} are satisfied. Hence the proof of the existence
follows. The others statements follows by Theorem \ref{globalproperties}.
\end{proof}

\begin{proof}[Proof of Theorem~\ref{th3bb}]  The proof follows by Theorem ~\ref{th3'} with $u_0\in {\widetilde{\X}}$ and by  Theorem \ref{globalproperties}.
\end{proof}

\section{Blow-up results}
\label{blowup-span}
\setcounter{equation}{0}
This section is devoted to the proofs of Theorems \ref{blowup}, \ref{lifespanderiveedelta}, \ref{Dickstein-type}, \ref{Dickstein-type2} and Corollary \ref{nonexistlimit}.
Let $K>0.$ For $\psi\in {\mathcal B}_K$, given by \eqref{ball}, where
$0 < \gamma < N$ and $m$ an integer with $0 \le m \le N$,  we recall from \cite[Formula (1.18), p. 344]{MTW}
for $1\leq m\leq N$ and
\cite[Formula (1.17), p. 1107]{CDW-DCDS} for $m=0$, the following definition
\begin{equation}
\label{Zu0}
{\mathcal{Z}}(\psi):=\left\{z\in {\mathcal{B}_K},\; \exists\, \lambda_n\to \infty, n\to \infty,\; \mbox{ such that }\;
\lim_{n\to \infty}\lambda_n^{\gamma+m}D_{\lambda_n}\psi=z\, \mbox{in}\, {\mathcal{B}}_K^*\right\},
\end{equation}
where ${\mathcal{B}}_K^*$ denotes ${\mathcal B}_K$ endowed with respect to the weak$^*$ topology on $\X$ and the dilation operator $D_\lambda$ is given by \eqref{dilation}.
 By \cite[Proposition 3.1 (iii), p. 356]{MTW} and \cite[Remark 2.4(iii), p. 1112]{CDW-DCDS}, ${\mathcal{Z}}(\psi)$ is a nonempty, compact,
and connected subset of the compact metric space ${\mathcal B}_K^*.$

Also, it is important to remark that the conditions on $z \in L^1_{\rm{loc}}\left(\R^N\setminus\{0\}\right)$ in the statements of Theorems~\ref{blowup} and \ref{Dickstein-type2} can be interpreted to say that
$z_{|\overline\Omega} \in {\mathcal{Z}}(f_{|\overline\Omega})$.

\begin{proof}[Proof of Theorem \ref{blowup}] This result concerns
solutions in $C_0(\R^N) \cap {\mathcal A}$, where ${\mathcal A}$ is defined by
\eqref{defA}, i.e. solutions which are anti-symmetric in $x_1, x_2, \dots , x_m$.  Moreover, the solutions in question are positive
on $\Omega$.
It is clear from Proposition~\ref{inteqrel}
that it suffices to consider the restrictions of these solutions to $\overline\Omega$,
as solutions of \eqref{intequOmega}, rather than as solutions on
$\R^N$ of \eqref{NLHint}.  This will enable us to use the positivity.

To simplify the notation, we will use, {\it by abuse of notation},
the same letters to denote functions on $\R^N$ and their restrictions
to $\overline\Omega$.  For example, if $f \in C_0(\R^N)$ is as in the statement of the theorem, and therefore in ${\mathcal A}$, in this proof $f$ will denote its restriction to $\overline\Omega$, as an element of $C_0(\Omega).$ Also, we consider the case $m \ge 1$, the case
$m = 0$ being entirely analogous.

 Let $u_0$ and $f$ be as in the statement of the theorem (restricted to $\overline\Omega$).  By comparison, it suffice to show that $T_{\max}(f)<\infty.$
 We argue by contradiction and assume that $T_{\max}(f)=\infty.$ By \cite[Theorem 1, p. 546]{W4}, since $f \ge 0$, it follows that
\begin{equation}
 \label{Criterum}
 \|e^{t\Delta_\Omega}f\|_{L^\infty(\Omega)}\leq \left(\alpha t\right)^{-1/\alpha},\; \forall\; t>0.
\end{equation}
Moreover, by \eqref{commOmega} we have
$$
 \|e^{t\Delta_\Omega}f\|_{L^\infty(\Omega)}= \|D_{\sqrt{t}}\left(e^{t\Delta_\Omega}f\right)\|_{L^\infty(\Omega)}=\|e^{\Delta_\Omega}D_{\sqrt{t}}f\|_{L^\infty(\Omega)}.
 $$
Hence,
\begin{equation}
 \label{property1m}
 t^{{\gamma+m\over 2}}\|e^{t\Delta_\Omega}f\|_{L^\infty(\Omega)}=\left\|e^{\Delta_\Omega}\left(t^{{\gamma+m\over 2}}D_{\sqrt{t}}f\right)\right\|_{L^\infty(\Omega)}.
\end{equation}

By the hypotheses of the theorem, $f\in C_0(\Omega)\cap \mathcal{B}_K,$ where $\mathcal{B}_K$ is defined by \eqref{ball}. Let $z \in L^1_{\rm{loc}}(\Omega)$, $z \not\equiv 0$, and $\lambda_n \to \infty$
 be such that
\begin{equation*}
%\label{subseqlim}
\lambda_n^{\gamma + m}D_{\lambda_n}f \to z
\end{equation*}
in ${\mathcal D'}(\Omega)$, as $n \to \infty$. We claim that  $z\in \mathcal{Z}(f)$ and  $\lambda_n^{\gamma + m}D_{\lambda_n}f \to z$ in ${\mathcal{B}}_K^*$,
where  $\mathcal{Z}$ is defined in \eqref{Zu0}. In fact, since ${\mathcal{B}}_K^*$ is compact, there exists a subsequence $(\lambda_{n_k})_{k=1}^{\infty}$ and
$\zeta$ in ${\mathcal{B}}_K^*$ such that $\lambda_{n_k}^{\gamma + m}D_{\lambda_{n_k}}f \to \zeta$ in ${\mathcal{B}}_K^*.$ But convergence in ${\mathcal{B}}_K^*$ implies convergence
in ${\mathcal D'}(\Omega)$, so $z=\zeta \in {\mathcal{B}}_K^*$.
It then follows from Proposition~\ref{equivlim} that
$\lambda_n^{\gamma+m} D_{\lambda_n} f\to z$ in $\mathcal{B}^*_{K}$ as $n\to \infty$, and so  $z\in \mathcal{Z}(f)$.

Let now $t_n=\lambda_n^2\to \infty,$ as $n\to \infty.$
Then $t_n^{{\gamma+m\over 2}}D_{\sqrt{t_n}}f\to z$
 in $\mathcal{B}^*_{M}$ as $n\to \infty.$  By \cite[Proposition 4.1(ii), p. 359]{MTW}, it follows that
 \begin{equation}
  \label{property2m}
e^{t\Delta_\Omega}\left(t_n^{{\gamma+m\over 2}}D_{\sqrt{t_n}}f\right) \to  e^{\Delta_\Omega}z,\;\; \mbox{ in }\; C_0(\Omega),
 \end{equation}
as $n\to \infty$, and so
 \begin{equation}
  \label{condition2m}\lim_{n\to\infty}t_n^{{\gamma+m\over 2}}\|e^{t_n\Delta_\Omega}f\|_{L^\infty(\Omega)}=\|e^{\Delta_\Omega}z\|_{L^\infty(\Omega)}\not=0.
  \end{equation}
 Using \eqref{Criterum}, we get
  \begin{equation}
  \label{reformulationCriterumm}
  t_n^{{\gamma+m\over 2}}\|e^{t_n\Delta_\Omega}f\|_{L^\infty(\Omega)}\leq \alpha^{-1/\alpha}t_n^{{\gamma+m\over 2}-{1\over \alpha}},\; \forall\; n>0.
  \end{equation}
By \eqref{condition2m}  and \eqref{reformulationCriterumm}, we see that for any $\varepsilon>0,$
$$\|e^{\Delta_\Omega}z\|_{L^\infty(\Omega)}-\varepsilon<\alpha^{-1/\alpha}t_n^{{\gamma+m\over 2}-{1\over \alpha}}\; \mbox{ for large n}.$$
Letting $n\to \infty$ in the last inequality, we get a contradiction if
$\alpha<2/(\gamma+m)$ with the fact that $\|e^{\Delta_\Omega}z\|_{L^\infty(\Omega)}> 0.$ We also get a contradiction if $\alpha=2/(\gamma+m)$ by letting $n\to \infty,$ since in this case
$\|e^{\Delta_\Omega}z\|_{L^\infty(\Omega)}>\alpha^{-1/\alpha}.$
It follows that  $T_{\max}(f)<\infty.$ This completes the proof  of Theorem \ref{blowup}.
\end{proof}

The following remark, inspired by \cite[Proposition 2.12, p. 1117]{CDW-DCDS}, gives some examples of functions $f$ satisfying the conditions of Theorem~\ref{blowup} such that ${\mathcal{Z}}(f_{|\overline\Omega})\not=\{0\}$,
i.e. for which an appropriate $z \in L^1_{\rm{loc}}\left(\R^N\setminus\{0\}\right) \cap {\mathcal A}$ exists.  In particular, it generalizes the example
discussed in Remark~\ref{compact}(iii).

Note first that if $\gamma+m<N$ and $f\in L^p(\Omega)$ for some $1\leq p<N/(\gamma+m)$ then
${\mathcal{Z}}(f)=\{0\}.$ This follows by scaling argument. See \cite[Proposition 2.5 (iii), p. 1112]{CDW-DCDS} for the case $m=0.$
Also, if $f \in C_0(\R^N) \cap {\mathcal A}$ has compact support,
then ${\mathcal{Z}}(f)=\{0\}.$

\begin{rem}
 \label{exampleinitialdata}
{\rm
 Let $0\leq m\leq N$ be an integer and $0<\gamma<N.$ Let $g\in C(\R,\R)$ be a bounded function such that $g(t)\ge 0$ for $t\in\R$. Let $\zeta\in C(S^{N-1})
 \cap {\mathcal A}$, and suppose that $f \in C_0(\R^N) \cap {\mathcal A}$ is such that
 $f_{|\overline\Omega} \ge 0$ and satisfies
 \begin{equation}
  \label{generalexample}
  f(x)=\psi_0(x)g\left(\log(|x|)\right)\zeta\left({x\over |x|}\right),\; x\in \Omega,
  |x| \ge \rho,
 \end{equation}
 where $\psi_0$ is given by \eqref{psi0} and ${\mathcal A}$ is given by \eqref{defA}.
 Then there exists $K>0$ such that $f_{|\overline\Omega}\in C_0(\Omega)\cap \mathcal{B}_K.$ Moreover, we have
 \begin{equation*}
\lambda^{(\gamma+m)}D_{\lambda}f(x)=\psi_0(x)g\left(\log(|x|)+\log \lambda\right)\zeta\left({x\over |x|}\right),\; x\in \Omega,
|x| \ge \rho/\lambda.
 \end{equation*}
It is now a simple matter to choose a wide variety of functions $g$ such that
${\mathcal{Z}}(f_{|\overline\Omega})\not=\{0\}$.  For example, if $g$ is periodic, then
the function $z$, given by expression in \eqref{generalexample}
is an element of ${\mathcal{Z}}(f)$.
}
\end{rem}

\begin{rem}{\rm
We wish to point out that the proof of Theorem~\ref{blowup}, which
gives a genuine improvement of known blowup results for equation
\eqref{NLheat}, including in the case $m = 0$, depends only on calculations with the (linear) heat
semigroup.  The first ingredient of the proof is \cite[Theorem 1, p. 546]{W4},
which gives a necessary condition on the heat semigroup for
existence of nonnegative solutions to \eqref{NLheat} on a certain
time interval.  The second ingredient is \cite[Proposition 4.1(ii), p. 359]{MTW},
and \cite[Proposition 3.8, p. 1123]{CDW-DCDS} in the case $m = 0$.
In particular, the example discussed in Remark~\ref{compact}(iii) could
have been observed after the appearance of \cite{W4} and \cite{CDW-DCDS}.
}
\end{rem}

Both Theorem~\ref{Dickstein-type} and Corollary~\ref{nonexistlimit} are consequences of Theorem~\ref{Dickstein-type2}.  The key arguments of the proof of  Theorem~\ref{Dickstein-type2} i.e. the rescaling and the continuity
of blowup times, are inspired by the paper \cite{D}.  Furthermore,
in order to apply these methods, we need the well-posedness of the
integral equation \eqref{NLHint} on the space
$\widetilde{\X}$ given by \eqref{extention}, which was shown
in Theorem~\ref{th3'}.  The following theorem is a slight reformulation and generalization
of  Theorem~\ref{Dickstein-type2}.  (Recall Proposition~\ref{equivlim} and 
Remark~\ref{equivlim2} above.)

\begin{To}
\label{th3b}
Let the integer $m$ and the real numbers
$\alpha,\; \gamma$ be such that $$0\leq m \leq N,\; 0<\gamma<N,\; 0<\alpha<{2\over \gamma+m},\; (N-2)\alpha<4.$$
Let $u_0\in {\widetilde{\X}}$ and let   $u \in C\left((0,T_{\max}(u_0)); C_0(\R^N) \cap {\mathcal A}\right)$ be the maximal  solution of \eqref{NLHint}
given by Theorem \ref{th3'}
such that $u(t)\to u_0$ in $\Sd$ as $t\to 0.$  Let $f\in \widetilde{{\mathcal{B}}_K},$ for some $K>0.$  For $\lambda>0$, let  $u^\lambda\in C\left((0,T_{\max}(\lambda f)); C_0(\R^N)\right)$ be the maximal solution of \eqref{NLHint} given by Theorem \ref{th3'}
such that $u(t)\to \lambda f$ in $\Sd$ as $t\to 0$. 

Suppose that there exists a sequence $(\tau_n)_{n = 1}^\infty$, with $\tau_n > 0$,
such that
\begin{equation}
\label{conditionf}
\tau_n^{-{\gamma+m\over 2}}D_{1\over \sqrt \tau_n}f\to u_0\; \mbox{ in } {\mathcal S}'(\R^N)\; \mbox{ as }\, n\to \infty.
\end{equation}
It follows that
\begin{equation}
\label{eqlimit}
\lim_{n\to \infty} \lambda_n^{[({1\over \alpha}-{\gamma+m\over 2})^{-1}]}T_{\max}(\lambda_n f)=T_{\max}(u_0),
\end{equation}
where $\lambda_n=\tau_n^{{1\over \alpha}-{\gamma+m\over 2}}$.
In particular, if $T_{\max}(u_0)<\infty$ then $T_{\max}(\lambda_n f)<\infty$ for all $n$ sufficiently large.

Furthermore, if $\tau_n \to 0$ as $n \to \infty$, then $\lambda_n \to 0$ as $n \to \infty$,
and if $\tau_n \to \infty$ as $n \to \infty$, then $\lambda_n \to \infty$ as $n \to \infty$.
\end{To}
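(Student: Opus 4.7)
The plan is to reduce Theorem~\ref{th3b}, via the scaling invariance of equation \eqref{NLheat}, to a continuity statement for $T_{\max}$ under convergence of initial data in $\mathcal{S}'(\R^N)$ within the fixed ball $\widetilde{\mathcal{B}_K}$. Set $\mu_n = 1/\sqrt{\tau_n}$. The invariance $u(t,x)\mapsto \mu^{2/\alpha}u(\mu^2 t,\mu x)$ gives $T_{\max}(\mu^{2/\alpha}v_0(\mu\,\cdot\,)) = \mu^{-2}T_{\max}(v_0)$ for every admissible $v_0$. Taking $v_0=\lambda_n f$ with $\lambda_n=\tau_n^{1/\alpha-(\gamma+m)/2}$, a direct computation yields
\begin{equation*}
\mu_n^{2/\alpha}(\lambda_n f)(\mu_n\,\cdot\,)=\tau_n^{-(\gamma+m)/2}D_{1/\sqrt{\tau_n}}f=:g_n,
\end{equation*}
and hence
\begin{equation*}
\lambda_n^{[(1/\alpha-(\gamma+m)/2)^{-1}]}T_{\max}(\lambda_n f)=\tau_n T_{\max}(\lambda_n f)=T_{\max}(g_n).
\end{equation*}
Since $\psi_0$ is homogeneous of degree $-(\gamma+m)$, the norm $\|\cdot\|_{\mathcal X}$ is invariant under the transformation $g\mapsto\tau^{-(\gamma+m)/2}D_{1/\sqrt{\tau}}g$, so $g_n\in\widetilde{\mathcal{B}_K}$ for every $n$. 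Together with the hypothesis $g_n\to u_0$ in $\mathcal{S}'(\R^N)$ (which forces $u_0\in\widetilde{\mathcal{B}_K}$ by Proposition~\ref{equivlim} and Remark~\ref{equivlim2}), this reduces \eqref{eqlimit} to the single statement $T_{\max}(g_n)\to T_{\max}(u_0)$.

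The next step is to upgrade the $\mathcal{S}'$ convergence to strong convergence at a positive time. Choose $(M,T_0)$ satisfying \eqref{e21n}--\eqref{e22n} for the fixed $K$. By Theorem~\ref{th3'}(iii), the solutions $u_n$ issued from $g_n$ converge to $u$ (the solution from $u_0$) uniformly on $[t_0,T_0]$ for every $t_0\in(0,T_0)$; in particular $u_n(t_0)\to u(t_0)$ in $C_0(\R^N)$ for some small $t_0>0$. The lower semicontinuity $\liminf_n T_{\max}(g_n)\ge T_{\max}(u_0)$ now follows in standard fashion: combining this strong convergence at $t_0$ with the classical continuous dependence of the $C_0(\R^N)$ flow, one propagates convergence on compact subintervals up to any $T''<T_{\max}(u_0)$, so $T_{\max}(g_n)>T''$ for all $n$ large enough. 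If $T_{\max}(u_0)=\infty$ we are done.

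The main obstacle is the upper semicontinuity when $T_{\max}(u_0)<\infty$: one must prevent the maximal times $T_{\max}(g_n)$ from escaping strictly beyond $T_{\max}(u_0)$. Here the hypothesis $(N-2)\alpha<4$ enters decisively: by \cite{GMS1,GMS2} it forces the blowup of $u$ to be of type~I, i.e.\ $(T_{\max}(u_0)-t)^{1/\alpha}\|u(t)\|_{L^\infty(\R^N)}$ remains bounded as $t\nearrow T_{\max}(u_0)$. Feeding this type~I estimate together with the strong convergence $u_n(t_0)\to u(t_0)$ in $C_0(\R^N)$ into the continuity-of-blowup-time result of \cite{FMZ} yields $\limsup_n T_{\max}(g_n)\le T_{\max}(u_0)$. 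Combined with the lower bound this proves \eqref{eqlimit}. Finally, the claim that $\tau_n\to 0$ (respectively $\infty$) implies $\lambda_n\to 0$ (respectively $\infty$) is immediate since the exponent $1/\alpha-(\gamma+m)/2>0$ under the standing assumption $\alpha<2/(\gamma+m)$.
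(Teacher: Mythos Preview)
Your proof is correct and follows essentially the same approach as the paper's: the same scaling reduction to $T_{\max}(g_n)\to T_{\max}(u_0)$ with $g_n=\tau_n^{-(\gamma+m)/2}D_{1/\sqrt{\tau_n}}f\in\widetilde{\mathcal{B}_K}$, the same use of Theorem~\ref{th3'}(iii) to obtain $C_0$ convergence at a small positive time, and the same appeal to \cite{GMS1,GMS2} for type~I blowup combined with \cite{FMZ} for continuity of $T_{\max}$ (with lower semicontinuity handling the case $T_{\max}(u_0)=\infty$). Your write-up is in fact slightly more explicit in separating the lower and upper semicontinuity steps and in justifying $g_n\in\widetilde{\mathcal{B}_K}$ via the homogeneity of $\psi_0$.
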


\begin{proof}
By the standard
invariance properties of solutions to
(\ref{NLheat}), for any $\tau > 0$,
 \begin{equation}
 \label{standardinv}
 u_\tau^\lambda(t,x)=\tau^{-1/\alpha}u^\lambda\left({t \over \tau},{x \over \sqrt \tau}\right)
 \end{equation}
is the maximal
solution of (\ref{NLheat}) with initial value
$$
u_{0,\tau}^\lambda= \lambda \tau^{-1/\alpha}D_{1 \over \sqrt \tau}f
\in \widetilde{{\mathcal{B}}_{\rho K}}
$$
 in the sense of Theorem \ref{th3'}, where $\rho = \lambda\tau^{-({1 \over \alpha}-{\gamma+m \over 2})}$.
For  $\tau> 0$ we let $\lambda = \lambda(\tau)$ be given by
$$
\lambda =\lambda(\tau)= \tau^{{1 \over \alpha}-{\gamma+m \over 2}}.
$$
With this choice of $\lambda = \lambda(\tau)$, we refer to the solutions
$u_{\tau}^{\lambda(\tau)}$ as $u_\tau.$ It follows that the initial data of $u_\tau,$ is $u_{0,\tau} =
f_\tau$ where
\begin{equation}
\label{ftau}
f_\tau:=\tau^{-{\gamma+m \over 2}}D_{1 \over \sqrt \tau}f \in \widetilde{{\mathcal{B}}_{K}}.
\end{equation}
By \eqref{standardinv},
\begin{equation}
\label{scalingtmax}
T_{\max}(u_{0,\tau})=\tau T_{\max}(\lambda f)=\lambda^{[({1\over \alpha}-{\gamma+m\over 2})^{-1}]}T_{\max}(\lambda f).
\end{equation}
Moreover, $f_\tau, u_0 \in \widetilde{{\mathcal{B}}_K}$,
given by \eqref{extentionK}.

Since $u_{0,\tau_n} =
f_{\tau_n} \to u_0$ in $\Sd$, by \eqref{conditionf}, it follows from
Theorem \ref{th3'} that there exists $T >0$ such that $u_{\tau_n}(t)\to u(t)$
 in $C_0(\R^N)$  for all $t\in (0,T]$.  Fix $0 < \delta < T$ and
let $u_{\tau_n,\delta}(t)=u_{\tau_n}(t+\delta)$ and $u_\delta(t)=u(t+\delta)$ with maximal existence times respectively $T_{\max}(u_{0,\tau_n})-\delta$ and $T_{\max}(u_0)-\delta$. Since $(N-4)\alpha<2$, we know by \cite{GMS1,GMS2} that blowing up solutions are always type I. Hence, by continuity of the maximal time of existence in $C_0(\R^N)$ for type I blow-up solutions  (see \cite{FMZ}), in the case where $T_{\max}(u_0) < \infty$,
and by the lower semi-continuity of the
maximal time of existence in the case $T_{\max}(u_0) = \infty$, it follows that
\begin{equation*}
T_{\max}(u_{\tau_n}(\delta)) \to T_{\max}(u(\delta)),
\end{equation*}
which implies that
\begin{equation*}
T_{\max}(u_{0,\tau_n}) \to T_{\max}(u_0).
\end{equation*}
The relation \eqref{eqlimit} now follows from \eqref{scalingtmax}.

Finally, since $\alpha < 2/(\gamma+m),$ we have that as $\tau_n \to 0$ 
(respectively $\infty$) if and only if
$\lambda_n \to 0$ (respectively $\infty$).
\end{proof}

As mentioned above, Theorem~\ref{Dickstein-type2} is included in
Theorem~\ref{th3b}, and has thus been proved.  We next turn to
Theorem~\ref{Dickstein-type}.

\begin{proof}[Proof of Theorem \ref{Dickstein-type}] Let $f\in C_0(\R^N)\cap {\mathcal A}$ satisfying \eqref{boundf}. It follows that there exist two constants, $0 < c < C < \infty$ and $L > 0$, such that
$$
c\psi_0 \le f(x) \le C\psi_0, \quad \forall |x| \ge L, \, x \in \Omega,
$$
where $\psi_0$ is given by \eqref{psi0},
and so
$$
c\psi_0 \le f_\tau(x) \le C\psi_0, \quad \forall |x| \ge L\sqrt\tau, \, x \in \Omega,
$$
where $f_\tau$ is given by \eqref{ftau}.
Hence, if $u_0$ and $\tau_n$ satisfy the hypotheses of Theorem~\ref{th3b},
and if $\tau_n \to 0$,
we must have that
$$
c\psi_0 \le u_0(x) \le C\psi_0, \quad \forall x \in \Omega.
$$
By comparison, and Theorems~\ref{globalproperties} and \ref{th3b}, that
\begin{equation}
\label{cadre}
T_{\max}(C\psi_0) \le
T_{\max}(u_0) = \lim_{n\to \infty} \lambda_n^{[({1\over \alpha}-{\gamma+m\over 2})^{-1}]}T_{\max}(\lambda_n f) \le
T_{\max}(c\psi_0)
< \infty,
\end{equation}
where $\lambda_n = \tau_n^{({1\over \alpha}-{\gamma+m\over 2})} \to 0$
as $n \to \infty$.

It follows from \eqref{cadre} that
\begin{equation}
\label{cadre2}
T_{\max}(C\psi_0) \le
\liminf_{\lambda \to 0} \lambda^{[({1\over \alpha}-{\gamma+m\over 2})^{-1}]}T_{\max}(\lambda f) \le
\limsup_{\lambda \to 0} \lambda^{[({1\over \alpha}-{\gamma+m\over 2})^{-1}]}T_{\max}(\lambda f) \le
T_{\max}(c\psi_0)
\end{equation}
To see this, suppose to the contrary there is a sequence $\lambda_n \to 0$
such that $\lambda_n^{[({1\over \alpha}-{\gamma+m\over 2})^{-1}]}T_{\max}(\lambda_n f)$ does not ultimately fall in the range given by \eqref{cadre2}.
Passing to a subsequence, we may assume, by the compactness of
${\mathcal{Z}}(f)$, defined by \eqref{Zu0}, and by \cite[Proposition 5.1 p. 361]{MTW}, that there exists $u_0$
such that the hypotheses of the Theorem~\ref{th3b} are satisfied
with $\tau_n = \lambda_n^{[({1\over \alpha}-{\gamma+m\over 2})^{-1}]}$,
and hence we obtain \eqref{cadre}. This completes the proof of the theorem.
\end{proof}

We now give the proof of Corollary \ref{nonexistlimit}.

\begin{proof}[Proof of Corollary \ref{nonexistlimit}]
As in the proof of Theorem \ref{blowup}, it suffices to consider the
restrictions of functions to $\overline{\Omega}$.  By \cite[Proposition 2.9, p. 1115]{CDW-DCDS} and \cite[Section 3]{MTW}, given
$0 < c_1 < c_2 \le K < \infty$,
there exists $f \in {\mathcal B}_K^* \cap C_0(\Omega)$, such that
${\mathcal{Z}}(f) = \{c\psi_0 ;
c_1 \le c \le c_2\}$.
(A result analogous to \cite[Proposition 2.9, p. 1115]{CDW-DCDS} was
not explicitly given in \cite[Section 3]{MTW} but is easily proved.)
It follows that
$$
\liminf_{\lambda\to 0}\lambda^{[({1\over \alpha}-{\gamma+m\over 2})^{-1}]}T_{\max}(\lambda f) = T_{\max}(c_2\psi_0) < T_{\max}(c_1\psi_0)
= \limsup_{\lambda\to 0}\lambda^{[({1\over \alpha}-{\gamma+m\over 2})^{-1}]}T_{\max}(\lambda f),
$$
where we have used formulas \eqref{lifespanlim} and \eqref{generallifespan2}.
\end{proof}

\begin{rems}
\label{limitexist}$\;$
{\rm
\begin{itemize}
\item[(i)] Let $f \in C_0(\R^N) \cap {\mathcal A}$ be a universal solution in  as in \cite[Theorem 1.4, p. 345]{MTW}, or \cite[Theorem 1.2, p. 1108]{CDW-DCDS} in the case $m = 0$. More precisely, for some $K > 0$, $f_{|\Omega}:=g\in \mathcal{B}_K\cap C^\infty(\Omega)\cap C_0(\Omega)$ and
 $$
 {\mathcal{Z}}(g)= \mathcal{B}_K,
 $$
    where ${\mathcal{Z}}(g)$ is defined by \eqref{Zu0}.
Hence ${\mathcal{Z}}(f)=\widetilde{{\mathcal{Z}}(g)}=\widetilde{{\mathcal{B}}_K}.$
It follows that for every $z \in {\widetilde{{\mathcal B}_K}}$ there exists
a sequence $\mu_n \to \infty$ such that \eqref{subseqlim2} holds.
Thus, assuming the conditions on $\alpha$ in Theorem~\ref{Dickstein-type2},
it follows that
$$
\lambda_n^{[({1\over \alpha}-{\gamma + m\over 2})^{-1}]}T_{\max}(\lambda_n f) \to T_{\max}(z)
$$
where $\lambda_n = \mu_n^{-[{2\over \alpha}-(\gamma + m)]} \to 0$.

This adds to the examples described in the previous remarks.

\item[(ii)] If the initial data $f$ in Theorem \ref{Dickstein-type} satisfies also
$$\lim_{|x|\to \infty,\; x\in \Omega}{f(x)\over \psi_0(x)}=c>0,$$
Then
$$
{\mathcal{Z}}(f_{|\Omega})=\left\{c\psi_0\right\}.
$$
Hence
$$
\lim_{\lambda\searrow 0} \lambda^{\left({1\over \alpha}-{\gamma+m\over 2}\right)^{-1}}T_{\max}(\lambda f)=T_{\max}(c\psi_0)<\infty.
$$
In particular, the last limit exists and is finite. The case  $m=0$  is known, see \cite[Theorem 1.3, p. 307]{D}. See also \cite[Theorem 2, p. 172]{GW} for the case $m=0$,  but only for a more restrictive class of initial data.
\end{itemize}}
\end{rems}

The following two results are analogues of Theorem~\ref{Dickstein-type}
and Corollary~\ref{nonexistlimit} where the asymptotic behavior
of $T_{\max}(\lambda f)$ as $\lambda \to \infty$ is studied, as opposed
to $\lambda \to 0$. 

\begin{prop}
\label{nonDickstein-type} Let $a=1$ in \eqref{NLHint} and let the  integer $m$ and the real numbers $\alpha,\; \gamma$ be such that
$$
0\leq m\leq N,\;  0<\gamma <N,\; 0<\alpha<{2\over \gamma+m},\; (N-2)\alpha<4.
$$
Let $f\in \widetilde{\X}$, not necessarily positive, be anti-symmetric with respect to $x_1,\;x_2,\;\cdots,\; x_m$, and suppose that
\begin{equation}
\label{nonboundf}
0 < \liminf_{|x|\to 0, \, x\in \Omega}\frac{|x|^{\gamma + 2m}}{ x_1\cdots x_m} f(x)\leq \limsup_{|x|\to 0, \, x\in \Omega}\frac{|x|^{\gamma + 2m}}{ x_1\cdots x_m}f(x) < \infty.
\end{equation}
Then there exists $\lambda_0>0$ such that for all $\;\lambda>\lambda_0,$ the maximal solution $u_\lambda: (0,T_{\max}(\lambda f))\to C_0(\R^N)$ of the integral equation \eqref{NLHint}
with initial value $\lambda f$ blows up in finite time. Moreover,
\begin{equation}
\label{nonlimsupliminf}0 < \liminf_{\lambda\nearrow \infty}\lambda^{[({1\over \alpha}-{\gamma+m\over 2})^{-1}]}T_{\max}(\lambda f)\leq \limsup_{\lambda\nearrow \infty}\lambda^{[({1\over \alpha}-{\gamma+m\over 2})^{-1}]}T_{\max}(\lambda f) < \infty.
\end{equation}
\end{prop}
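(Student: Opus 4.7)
The plan is to mirror the proof of Theorem~\ref{Dickstein-type} using the rescaling machinery of Theorem~\ref{th3b}, exchanging the roles of ``near infinity'' and ``near the origin''. First I set $\tau=\tau(\lambda)=\lambda^{(\frac{1}{\alpha}-\frac{\gamma+m}{2})^{-1}}$, so that $\lambda=\tau^{\frac{1}{\alpha}-\frac{\gamma+m}{2}}$ and $\lambda\to\infty$ is equivalent to $\tau\to\infty$, and define the rescaled datum $f_\tau=\tau^{-(\gamma+m)/2}D_{1/\sqrt{\tau}}f$ as in~\eqref{ftau}. Since $\psi_0$ is homogeneous of degree $-(\gamma+m)$, the pointwise identity $(f_\tau/\psi_0)(x)=(f/\psi_0)(x/\sqrt{\tau})$ holds, giving $\|f_\tau\|_{\X}=\|f\|_{\X}$ and therefore $f_\tau\in\widetilde{\mathcal B_K}$ with $K=\|f\|_{\X}$. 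By~\eqref{scalingtmax}, the quantity appearing in~\eqref{nonlimsupliminf} is precisely $T_{\max}(f_\tau)$, so the proposition reduces to proving
\[
0<\liminf_{\tau\to\infty}T_{\max}(f_\tau)\le\limsup_{\tau\to\infty}T_{\max}(f_\tau)<\infty.
\]

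Hypothesis~\eqref{nonboundf}, combined with $\psi_0(x)=c_{m,\gamma}x_1\cdots x_m|x|^{-\gamma-2m}$, produces $L>0$ and constants $0<c_1<c_2<\infty$ such that $c_1\psi_0(x)\le f(x)\le c_2\psi_0(x)$ for $x\in\Omega$ with $|x|\le L$. The pointwise identity then yields $c_1\psi_0(x)\le f_\tau(x)\le c_2\psi_0(x)$ on the expanding region $\{x\in\Omega:|x|\le L\sqrt{\tau}\}$, which exhausts $\Omega$ as $\tau\to\infty$. I would then argue that along any sequence $\tau_n\to\infty$ one can extract a subsequence with $f_{\tau_n}\to u_0$ in $\Sd$ for some $u_0\in\widetilde{\mathcal B_K}$ satisfying $c_1\psi_0\le u_0\le c_2\psi_0$ on $\Omega$. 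The compactness of $\widetilde{{\mathcal B}_K^*}$ (equivalent to that of ${\mathcal B}_K^*$ via Proposition~\ref{equivlim}) provides the subsequential weak$^*$ limit, and testing against nonnegative $\varphi\in\mathcal{D}(\Omega)$ with compact support, which lies in $\{|x|\le L\sqrt{\tau_n}\}$ for $n$ large, transfers the two-sided pointwise bound to $u_0$.

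With $u_0$ in hand, Theorem~\ref{th3b} (now in the regime $\tau_n\to\infty$, $\lambda_n\to\infty$) gives $T_{\max}(f_{\tau_n})\to T_{\max}(u_0)$. The comparison property of Theorem~\ref{th4}(vi), lifted to $\R^N$ via Proposition~\ref{inteqrel}, yields $T_{\max}(c_2\psi_0)\le T_{\max}(u_0)\le T_{\max}(c_1\psi_0)$, and both bounds are strictly positive and finite by Theorem~\ref{globalproperties}(iii) combined with the scaling identity~\eqref{generallifespan2}, namely $T_{\max}(c\psi_0)=c^{-(\frac{1}{\alpha}-\frac{\gamma+m}{2})^{-1}}T_{\max}(\psi_0)$. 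A subsequence-of-subsequence argument identical to the one leading to~\eqref{cadre}--\eqref{cadre2} in the proof of Theorem~\ref{Dickstein-type} then promotes these estimates to the uniform bounds
\[
T_{\max}(c_2\psi_0)\le\liminf_{\tau\to\infty}T_{\max}(f_\tau)\le\limsup_{\tau\to\infty}T_{\max}(f_\tau)\le T_{\max}(c_1\psi_0),
\]
and reverting to the $\lambda$ variable gives~\eqref{nonlimsupliminf}; in particular $T_{\max}(\lambda f)<\infty$ for all sufficiently large $\lambda$.

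The main obstacle I anticipate is the passage to the $\Sd$ limit while preserving the two-sided pointwise bound on $u_0$ throughout $\Omega$, since hypothesis~\eqref{nonboundf} only controls $f$ in a neighborhood of $0$. The decisive mechanism is the scaling identity $(f_\tau/\psi_0)(x)=(f/\psi_0)(x/\sqrt{\tau})$, which converts the local-near-zero information on $f$ into a pointwise estimate on $f_\tau$ on the dilating ball $|x|\le L\sqrt{\tau}$; this is the exact analog, under the inversion $x\mapsto x/\sqrt{\tau}$, of the at-infinity argument in the proof of Theorem~\ref{Dickstein-type}, and once the bound on $u_0$ is in place the remaining steps are entirely parallel.
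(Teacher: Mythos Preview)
Your proposal is correct and follows exactly the approach the paper takes: the paper's proof simply states that it ``follows exactly the same steps as the proof of Theorem~\ref{Dickstein-type} with the obvious modifications,'' namely replacing $|x|\ge L$ by $|x|\le L$ and $\tau\to 0$ by $\tau\to\infty$, which is precisely what you have spelled out in detail.
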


\begin{proof} The proof follows exactly the same steps as the proof of
Theorem~\ref{Dickstein-type} with the obvious modifications.  For example, the
first inequality in the proof is true for $|x| \le L$, instead of $|x| \ge L$, etc.

\end{proof}

\begin{prop}
\label{nonnonexistlimit} Let $a=1.$ Let the  integer $m$ and the real numbers $\alpha,\; \gamma$ be such that
$$0\leq m\leq N,\;  0<\gamma <N,\; 0<\alpha<{2\over \gamma+m},\; (N-2)\alpha<4.$$
Then there exists $f\in \widetilde{\X}$ satisfying the hypotheses of Proposition \ref{nonDickstein-type} such that
$$
\liminf_{\lambda\to \infty}\lambda^{[({1\over \alpha}-{\gamma+m\over 2})^{-1}]}T_{\max}(\lambda f) < \limsup_{\lambda\to \infty}\lambda^{[({1\over \alpha}-{\gamma+m\over 2})^{-1}]}T_{\max}(\lambda f).
$$
\end{prop}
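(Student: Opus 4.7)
The plan is to mirror the proof of Corollary~\ref{nonexistlimit}, but with dilation limits taken as the scale tends to $0^+$, which encodes the local behaviour of $f$ near the origin (precisely where hypothesis~\eqref{nonboundf} lives) rather than its behaviour at infinity.

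First I would recast Theorem~\ref{th3b} in a form tailored to $\lambda \to \infty$. Setting $\mu_n = 1/\sqrt{\tau_n}$, the hypothesis $\tau_n^{-(\gamma+m)/2}D_{1/\sqrt{\tau_n}}f \to u_0$ in $\Sd$ becomes $\mu_n^{\gamma+m}D_{\mu_n}f \to u_0$ in $\Sd$, and $\tau_n \to \infty$ is equivalent to $\mu_n \to 0$, which in turn corresponds to $\lambda_n = \tau_n^{{1\over\alpha}-{\gamma+m\over 2}} \to \infty$ (since $\alpha < 2/(\gamma+m)$). Accordingly, I would introduce the near-origin analogue of the set $\mathcal{Z}$ from~\eqref{Zu0},
\begin{equation*}
\mathcal{R}(f) := \bigl\{\,z \in \mathcal{B}_K : \exists\,\mu_n \to 0 \text{ with } \mu_n^{\gamma+m}D_{\mu_n}f \to z \text{ in } \mathcal{B}_K^*\bigr\}.
\end{equation*}
The estimate $|f|\le K\psi_0$ on $\Omega$ and the homogeneity identity $\psi_0(\mu\cdot)=\mu^{-(\gamma+m)}\psi_0$ give $|\mu^{\gamma+m}D_\mu f|\le K\psi_0$, so $\mathcal{R}(f)\subset \mathcal{B}_K$ is nonempty by compactness of $\mathcal{B}_K^*$.

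Next I would exhibit an $f \in \widetilde{\X}$ satisfying~\eqref{nonboundf} for which $\mathcal{R}(f)$ contains two distinct multiples $c_j\psi_0$ of $\psi_0$, $0 < c_1 < c_2 \le K$. A concrete choice, in the spirit of Remark~\ref{exampleinitialdata}, is
\begin{equation*}
f(x) = \psi_0(x)\,g(\log|x|),\qquad x\in\Omega,
\end{equation*}
extended anti-symmetrically to $\R^N$, where $g\in C(\R)$ takes values in $[c_1,c_2]$ and is chosen so that, as its argument $\to-\infty$, it is alternately constantly equal to $c_1$ and $c_2$ on intervals whose lengths tend to $\infty$. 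By homogeneity of $\psi_0$,
\begin{equation*}
\mu^{\gamma+m}D_\mu f(x) = \psi_0(x)\,g(\log|x|+\log\mu),
\end{equation*}
so picking $\mu_n^{(j)}\to 0$ whose logarithms lie deep inside intervals on which $g\equiv c_j$ yields, by dominated convergence in $\Dd$ (since $\psi_0\in L^1_{\mathrm{loc}}(\Omega)$) and Proposition~\ref{equivlim}, $\mu_n^{(j),\gamma+m}D_{\mu_n^{(j)}}f\to c_j\psi_0$ in $\mathcal{B}_K^*$. Hypothesis~\eqref{nonboundf} is automatic for this $f$, as $|x|^{\gamma+2m}f(x)/(x_1\cdots x_m)=c_{m,\gamma}g(\log|x|)$ stays in $[c_{m,\gamma}c_1,c_{m,\gamma}c_2]$ near $0$. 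Alternatively one may adapt directly the construction of \cite[Proposition~2.9]{CDW-DCDS} to the small-scale setting.

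Finally, with $\tau_n^{(j)}=(\mu_n^{(j)})^{-2}\to\infty$ and $\lambda_n^{(j)}=(\tau_n^{(j)})^{{1\over\alpha}-{\gamma+m\over 2}}\to\infty$, Theorem~\ref{th3b} delivers
\begin{equation*}
\lim_{n\to\infty}(\lambda_n^{(j)})^{[({1\over\alpha}-{\gamma+m\over 2})^{-1}]}T_{\max}(\lambda_n^{(j)} f) = T_{\max}(c_j\psi_0),\qquad j=1,2,
\end{equation*}
and the scaling formula~\eqref{generallifespan2} yields $T_{\max}(c_j\psi_0)=c_j^{-[({1\over\alpha}-{\gamma+m\over 2})^{-1}]}T_{\max}(\psi_0)$, with $T_{\max}(\psi_0)<\infty$ by Theorem~\ref{globalproperties}(iii). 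Since $c_1\neq c_2$, these two limits are distinct and finite, forcing a strict inequality between $\liminf$ and $\limsup$ of $\lambda^{[({1\over\alpha}-{\gamma+m\over 2})^{-1}]}T_{\max}(\lambda f)$ as $\lambda\to\infty$. The main obstacle is the second step—producing an anti-symmetric $f\in\widetilde{\X}$ whose rescalings at small scales pick up two distinct scaling profiles—but once this construction is in hand, the remainder is a mechanical application of Theorem~\ref{th3b} and the scaling relation~\eqref{generallifespan2}.
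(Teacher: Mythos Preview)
Your proof is correct and follows the same overall architecture as the paper's: introduce the small-scale analogue of $\mathcal{Z}$, exhibit an $f$ whose small-scale dilation limits contain two distinct multiples of $\psi_0$, and then invoke Theorem~\ref{th3b} together with~\eqref{generallifespan2}. The difference lies in how the required $f$ is produced.

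The paper does not build $f$ directly. Instead it introduces the Kelvin-type inversion $\mathcal{J}f(x)=|x|^{-2(\gamma+m)}f(x/|x|^2)$, observes that $\mathcal{J}$ is a homeomorphism of $\mathcal{B}_K^*$ fixing every function homogeneous of degree $-(\gamma+m)$, and checks the commutation $\lambda^{\gamma+m}D_\lambda\mathcal{J}=(1/\lambda)^{\gamma+m}\mathcal{J}D_{1/\lambda}$. This yields $\mathcal{H}(f)=\mathcal{J}^{-1}\mathcal{Z}(\mathcal{J}f)$ (your $\mathcal{R}$ is the paper's $\mathcal{H}$), so the already-known existence of $g$ with $\mathcal{Z}(g)=\{c\psi_0:c_1\le c\le c_2\}$ from \cite{CDW-DCDS} transfers for free to an $f=\mathcal{J}g$ with $\mathcal{H}(f)=\{c\psi_0:c_1\le c\le c_2\}$. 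Your route is more hands-on: you write down $f(x)=\psi_0(x)\,g(\log|x|)$ with $g$ alternating on intervals of diverging length as its argument $\to-\infty$, and verify the required dilation limits directly by dominated convergence. This is perfectly valid and arguably more transparent for the reader; the paper's inversion trick, on the other hand, is cleaner in that it recycles the large-scale construction wholesale and shows that the small-scale and large-scale theories are formally conjugate.
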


\begin{proof} The proof is same as that of Corollary~\ref{nonexistlimit}, however
taking into account Remark~2.4~(vi) of \cite{CDW-DCDS}.  More precisely,
 if we set
$$
{\mathcal J}f(x) = |x|^{-2(\gamma + m)}f(x/|x|^2)
$$
for all $f \in \widetilde{\mathcal{B}_K}$,
then ${\mathcal J}$ is a homeomorphism of $\widetilde{\mathcal{B}_K^*}$,
${\mathcal J}$ leaves invariant all functions which are homogeneous of
degree $-(\gamma + m)$,
$$
\lambda^{\gamma+m}D_\lambda{\mathcal J} = (1/\lambda)^{\gamma+m}{\mathcal J}D_{1/\lambda}
$$
and
$$
{\mathcal H} (f) = {\mathcal J}^{-1}\mathcal{Z}({\mathcal J}(f)),
$$
 where
\begin{equation}
\label{Hu0}
{\mathcal{H}}(f):=\left\{z\in {\mathcal{B}_K},\; \exists\, \lambda_n\to 0, n\to \infty,\; \mbox{ such that }\;
\lim_{n\to \infty}\lambda_n^{\gamma+m}D_{\lambda_n}f=z\, \mbox{in}\, {\mathcal{B}}_K^*\right\}.
\end{equation}
Thus, there exists $f \in {\mathcal B}_K^*$ such that
${\mathcal{H}}(f) = \{c\psi_0 ;
c_1 \le c \le c_2\}$.
\end{proof}

\begin{rems}$\;${\rm
\begin{itemize}
\item[(i)]
The functions $f \in {\widetilde\X}$ satisfying the hypotheses of Propostion~\ref{nonDickstein-type}, and in particular $f \in {\widetilde\X}$ such that
${\mathcal{H}}(f) = \{c\psi_0 ;
c_1 \le c \le c_2\}$, contructed in the previous proof, clearly can {\it not} be bounded in
any neighborhood of $0$.
\item[(ii)] On the other hand, if $\gamma + m < N$, there exist $f \in {\widetilde\X}$ satisfying the hypotheses of Propostion~\ref{nonDickstein-type} with
$f \in L^q(\Omega)$, $q > 1$ and $q_c < q < \frac{N}{\gamma + m}$,
where $q_c = \frac{N\alpha}{2}$.  In other words, the ``classical" theory
of local wellposedness for \eqref{NLheat} in $L^q$ includes such initial values.
\end{itemize}
}
\end{rems}

We conclude with  the proof of Theorem \ref{lifespanderiveedelta}.
\begin{proof}[Proof of Theorem~\ref{lifespanderiveedelta}]
This result is just a small addition to \cite[Theorem 1.2, p. 508]{TW-I}
and the proof is almost completely given there.
To complete the proof, it suffice to observe, {\it using the notation
established in  \cite[pp. 521-523]{TW-I}},
 that $T_{\max} (v_\tau(t_0)) \to T_{\max}(u(t_0))$
as $\tau \to 0$.  This is true by continuity of the maximal time of existence in $C_0(\R^N)$ for type I blow-up solutions  (see \cite{FMZ}).
This implies,  {\it continuing with the notation
established in  \cite[pp. 521-523]{TW-I}}, that
\begin{equation*}
\lim_{\lambda\to 0}\lambda^{[({1\over \alpha}-{N+m\over 2})^{-1}]}T_{\max}(\lambda f) = \lim_{\tau\to 0}T_{\max}(f_\tau)
= T_{\max} (v_\tau(t_0)) + t_0 = T_{\max}(u(t_0)) + t_0
= T(u_0).
\end{equation*}
 This finishes the proof of the theorem.
\end{proof}

\thebibliography{wwww}
\bibitem{BL}{C. Bandle and H. A. Levine, {\it On the
existence and nonexistence of global solutions of reaction-diffusion
equations in sectorial domains}, Trans. AMS, 316 (1989), 595--622.}

\bibitem{brezis}{H. Br\'ezis,
{\it Functional analysis, Sobolev spaces and partial differential
              equations}, Universitext, Springer, New York, 2011.}

 \bibitem{BF}{H.  Br\'ezis and A. Friedman, {\it Nonlinear parabolic equations
involving measures as initial conditions},  J. Math.
 Pures  Appl., 62 (1983), 73--97.}

\bibitem{CDEW}{T. Cazenave, F. Dickstein, M. Escobedo and F. B. Weissler,
{\it  Self-similar solutions of a nonlinear heat equation},  J.
Math. Sci. Univ. Tokyo, 8 (2001),  501--540.}

 \bibitem{CDW-DCDS}{T. Cazenave, F. Dickstein and F. B. Weissler,
{\it Universal solutions of the heat equation in
 $\Rd$,}  Discrete Contin. Dynam. Systems, 9 (2003), 1105-1132.}

\bibitem{CDWsurvey}{T. Cazenave, F. Dickstein and F. B. Weissler,
{\it Multi-scale multi-profile global solutions of parabolic equations in
 $\Rd$,}  Discrete Contin. Dynam. Systems - Series S, 5 (2012), 449-472.}

\bibitem{D}{F. Dickstein, {\it Blowup stability of solutions of the nonlinear
heat equation with a large life span},  J. Differential Equations, 223 (2006), 303--328.}

\bibitem{FMZ}{C. Fermanian Kammerer, F.  Merle and H. Zaag, {\it
Stability of the blow-up profile of nonlinear heat equations from
the dynamical system point of view}, Math. Ann.,  317 (2000),
347--387.}

\bibitem{F}{H. Fujita, {\it On the
blowing up of solutions of the Cauchy problem for $u_{t}=\Delta
u+u^{1+\alpha }$},  J. Fac. Sci. Univ. Tokyo Sect. I,  13  (1966),
109--124.}

\bibitem{G1}{T. Ghoul, {\it An extension of Dickstein's ``small lambda'' theorem for finite time blowup}, Nonlinear Analysis, T.M.A., 74 (2011), 6105--6115.}

\bibitem{GMS1}{Y. Giga, S. Matsui and S. Sasayama, {\it Blow up rate for semilinear heat
equations with subcritical nonlinearity}, Indiana Univ. Math. J., 53
(2004), 483--514.}

\bibitem{GMS2}{Y. Giga, S. Matsui and S. Sasayama, {\it On blow up rate for sign-changing solutions
in a convex domain}, Math. Methods Appl. Sci., 27 (2004),
1771--1782.}

\bibitem{GW}{C. Gui and X. Wang, {\em Life span of solutions of the Cauchy problem for a semilinear heat
equation}, J. Differential Equations,  115 (1995), 166--172.}

\bibitem{K}{O.  Kavian, {\it
Remarks on the large time behaviour of a nonlinear diffusion
equation}, Ann. I. H. Poincar\'e, Anal. non lin\'eaire,  4 (1987),
423--452.}

 \bibitem{LeeNi}{T. Y. Lee and  W. M. Ni,
{\it Global existence, large time behavior and life span of solutions of a semilinear parabolic Cauchy problem},
Trans. Amer. Math. Soc., 333 (1992), 365--378.}

\bibitem{LM}{H. A. Levine and P. Meier, {\it The value of the critical exponent for
reaction-diffusion equations in cones}, Arch. Ration. Mech. Anal.,
 109 (1990), 73--80.}

\bibitem{M1}{P. Meier, {\it Existence et non-existence de solutions globales d'une \'equations de la chaleur semi-lin\'eaire: extention d'un th\'eoreme de Fujita}, C. R. Acad. Sci., Paris Ser. I, 303 (1986), 635--637.}

\bibitem{M2}{ P. Meier, {\it Blow up of solutions of semilinear parabolic differential equations}, Z. Angew. Math. Phys., 39 (1988), 135--149.}

\bibitem{MY}{N. Mizoguchi and E. Yanagida, {\it Blowup and life span of solutions for a semilinear parabolic
              equation}, SIAM J. Math. Anal., 29 (1998), 1434--1446.}

\bibitem{MT}{L. Molinet and S. Tayachi, {\it Remarks on the {C}auchy problem for the one-dimensional
              quadratic (fractional) heat equation}, J. Funct. Anal., 269 (2015), 2305--2327.}

 \bibitem{MTW}{H. Mouajria, S. Tayachi and F. B. Weissler, {\it The heat equation on sectorial domains, highly singular initial values and applications}, J. Evol. Equ., 16 (2016), 341--364.}

 \bibitem{MW} {C. E. Mueller and F. B. Weissler, {\it Single point
 blow-up for a general semilinear heat equation}, Indiana
Univ. Math. J., 34 (1985), 881--913.}

\bibitem{QS}{P. Quittner and Ph. Souplet, {\it ``Superlinear parabolic problems. Blow-up, global existence and steady states"},
Birkh\"auser, Basel, 2007.}

\bibitem{Ri}{F. Ribaud, {\it Semilinear parabolic equations with
distributions as initial values}, Discrete Contin. Dynam. Systems, 3
(1997), 305--316.}

\bibitem{R}{P. Rouchon, {\it Blow-up of solutions of nonlinear heat equations in
unbounded domains for slowly decaying initial data}, ZAMP, 52 (2001), 1017-1032.}

\bibitem{SW}{ Ph.~Souplet and F.~B.~Weissler, {\it Self-similar sub-solutions
and blow-up for nonlinear parabolic equations}, J. Math. Anal.
Appl., 212 (1997), 60-74.}

\bibitem{TW-I}{S. Tayachi and  F. B. Weissler, {\it The nonlinear heat equation with high order mixed derivatives of
the Dirac delta as initial values}, Trans. AMS, 366 (2014), 505--530.}

\bibitem{TW4}{S. Tayachi and  F. B. Weissler, {\it Some remaks on life span results}, in preparation.}

\bibitem{W2}{F. B. Weissler, {\it Existence and nonexistence of global solutions for
a semilinear heat equation},  Israel J. Math., 38  (1981), 29--40.}

\bibitem{W3}{F. B. Weissler, {\it Rapidly decaying solutions of an ordinary differential equation with applications
to semilinear elliptic and parabolic partial differential
equations},  Arch. Rach. Mech. Anal.,  91  (1985), 247--266. }

\bibitem{W4}{F. B. Weissler, { \it $L^p-$Energy and blow--up for a semilinear heat equation}, Proceedings of Symposia in Pure Mathematics, 45 (1986), Part 2, 545--551}.

\bibitem{Wu}{J. Wu, {\it Well-posedness of a semilinear heat equation
with weak initial data}, Jnl. Fourier Anal. Appl., 4 (1998),
629--642.}
\endthebibliography
\end{document}